\renewcommand{\epsilon}{\varepsilon}
\renewcommand{\rho}{\varrho}
\renewcommand{\phi}{\varphi}
\renewcommand{\d}{{\rm d}}
\newcommand{\eins}{\mathds{1}}
\newcommand{\E}{\mathbb{E}}
\newcommand{\N}{\mathbb{N}}
\renewcommand{\P}{\mathbb{P}}
\newcommand{\R}{\mathbb{R}}
\newcommand{\Bc}{\mathcal{B}}
\newcommand{\Pc}{\mathcal{P}}
\newcommand{\Wc}{\mathcal{W}}
\newcommand{\Cpl}{\mathop{\rm Cpl}}
\newcommand{\dom}{\mathop{\rm dom}}
\newcommand{\Lip}{{\mathop{\rm Lip}\nolimits}}
\newcommand{\Lipb}{\Lip_{\rm b}}
\newcommand{\Con}{{{\rm C}}}
\newcommand{\Cb}{{\Con_{\rm b}}}
\newcommand{\BUC}{\Con_{\rm b}}
\newcommand{\ep}{\epsilon}
\newcommand{\la}{\lambda}
\newtheorem{theorem}{Theorem}[section]
\newtheorem{lemma}[theorem]{Lemma}
\newtheorem{proposition}[theorem]{Proposition}
\theoremstyle{definition}
\newtheorem{definition}[theorem]{Definition}
\newtheorem{example}[theorem]{Example}
\newtheorem{notation}[theorem]{Notation}
\newtheorem{remark}[theorem]{Remark}
\newtheorem{assumption}[theorem]{Assumption}
\numberwithin{equation}{section}
\begin{document}
	
	%%%%%%%%%%%%%%%%%%%%%%%%%%%%%%%%%%%%%%%%%%%%%%%%%%%%%%%%%%%%%%%%%%%%%%%%%%%%%%%%%%%%%%%%%%%%%%%%%%%%%%%%%%%%%%%%%%%%%%%%

	\title[Wasserstein Perturbation of Markovian semigroups]{Wasserstein perturbations of Markovian\\ transition semigroups}

	\author{Sven Fuhrmann}
	\address{Department of Mathematics and Statistics, University of Konstanz}
	\email{sven.fuhrmann@uni-konstanz.de}

	\author{Michael Kupper}
	\address{Department of Mathematics and Statistics, University of Konstanz}
	\email{kupper@uni-konstanz.de}

	\author{Max Nendel}
	\address{Center for Mathematical Economics, Bielefeld University}
	\email{max.nendel@uni-bielefeld.de}

	\thanks{We thank Daniel Bartl, Jonas Blessing, and Stephan Eckstein for valuable comments and discussions, as well as two anonymous referees for their helpful suggestions and remarks. Financial support through the Deutsche Forschungsgemeinschaft (DFG, German Research Foundation) -- SFB 1283/2 2021 -- 317210226 is gratefully acknowledged.}
	
	\date{\today}

	\begin{abstract}
		In this paper, we deal with a class of time-homogeneous continuous-time Markov processes with transition probabilities bearing a nonparametric uncertainty. The uncertainty is modelled by considering perturbations of the transition probabilities within a proximity in Wasserstein distance. As a limit over progressively finer time periods, on which the level of uncertainty scales proportionally, we obtain a convex semigroup satisfying a nonlinear PDE in a viscosity sense.\
		A remarkable observation is that, in standard situations, the nonlinear transition operators arising from nonparametric uncertainty coincide with the ones related to parametric drift uncertainty.  On the level of the generator, the uncertainty is reflected as an additive perturbation in terms of a convex functional of first order derivatives. We additionally provide sensitivity bounds for the convex semigroup relative to the reference model. The results are illustrated with Wasserstein perturbations of L\'evy processes, infinite-dimensional Ornstein-Uhlenbeck processes, geometric Brownian motions, and Koopman semigroups.\\
		
		\noindent \emph{Key words:} Markov process, Wasserstein distance, nonparametric uncertainty, convex semigroup, nonlinear PDE, viscosity solution\smallskip
		
		\noindent \emph{AMS 2020 Subject Classification:} Primary 60J35; 47H20;  Secondary 60G65;  90C31; 62G35
			
	\end{abstract}
	
	\maketitle
	
	\setcounter{tocdepth}{1}
	\section{Introduction}
	When considering stochastic processes for the modeling of  real world phenomena, a major issue is so-called \textit{model uncertainty} or \textit{epistemic uncertainty}.\ The latter refer to the impossibility of perfectly capturing information about the future in a single stochastic framework.\ The relevance of model uncertainty, in a stochastic and nonstochastic setting, is widely recognised within many fields such as statistics, operations research, finance, and economic theory, and one typically differentiates between parametric and nonparametric uncertainty.
	
	Parametric uncertainty relates to the lack of information regarding certain parameters of a model while taking other model-specific assumptions as given. In a dynamic setting, the construction of consistent families of nonlinear transition semigroups related to parameter uncertainty has recently received a lot of attention, cf. Coquet et al.\ \cite{MR1906435}, Denk et al.\ \cite{denk2020semigroup},  Fadina et al.\ \cite{MR3955321}, Hu and Peng \cite{hu2009glevy}, K\"uhn \cite{kuhn2018viscosity}, Neufeld and Nutz \cite{neufeld2017nonlinear},  and Peng \cite{PengG}. Nonlinear semigroups are intimately related to BSDEs and stochastic optimal control, cf.\ Cheridito et al.\ \cite{MR2319056}, El Karoui et al.\ \cite{MR1434407}, Kazi-Tani et al.\ \cite{MR3361253}, and Soner et al.\ \cite{MR2925572}.
	
	On the other hand,  nonparametric uncertainty refers to the impossibility of precisely capturing certain aspects of a model, such as independence or, more generally, the joint distributions of certain random variables -- phenomena that are in general not implementable using a finite number of parameters. 
    One way to tackle such situations is to consider perturbations of a reference model using a certain notion of proximity, e.g., in terms of Wasserstein distances, cf.\ Bartl et al.\ \cite{bartl2020computational}, Blanchet and Murthy \cite{blanchet2019quantifying}, Mohajerin Esfahani and Kuhn \cite{esfahani2018data}, Gao and Kleywegt \cite{gao2016distributionally}, Pflug and Wozabal \cite{pflug2007ambiguity}, Zhao and Guan \cite{MR3771309}, as well as Bartl et al.\ \cite{bartl2020limits} for a dynamic setting.
    
     The starting point of the present article is the work of Bartl et al.\ \cite{bartl2020limits}, where perturbations of certain L\'evy processes in $\R^d$ were considered. 
	Our focus is on more general classes of stochastic processes with values in infinite-dimensional state spaces and their transition semigroups under nonparametric uncertainty. For this purpose, we start with a reference model which is a time-homogeneous Markov process $(\Xi_t^x)$ of the form 
	\begin{equation}\label{eq.intro1} \Xi_t^x:=\psi_{t}(x) + Y_t\quad \text{for }t\geq 0\text{ and }x\in X,\quad \text{with }Y_t\sim\mu_t, \end{equation}
	taking values in a separable Banach space $X$.\ Here, $(\mu_t)_{t\geq 0}$ is a family of probability measures satisfying a certain moment condition.\
	Although at first glance slightly restrictive, this class of Markov processes is analytically tractable, can easily be simulated, and includes the following prominent dynamics, see Section \ref{sec.examples}:
	\begin{itemize}
		\item suitably integrable infinite-dimensional L\'evy processes,
		\item infinite-dimensional Ornstein-Uhlenbeck processes,
		\item geometric Brownian motions,
		\item deterministic flows arising from solutions to Banach-space-valued differential equations.
	\end{itemize}
The multiplicative structure of the geometric Brownian motion can for instance be captured by the dynamics $\Xi_t^x=\psi_{t}(x) + Y_t:=xY_t$, where the ``addition'' is understood as the multiplication on $(0,\infty)$, see Section \ref{sec.discussion} for further details. Given a Markov process of the form \eqref{eq.intro1}, we take nonparametric uncertainty of the respective transition probabilities into account by considering the worst-case scenario among perturbations within a proximity in Wasserstein distance. This is expressed by using a penalisation function $\phi\colon [0,\infty)\to [0,\infty]$ applied to the Wasserstein distance to the family $(\mu_t)_{t\geq 0}$. A canonical example would be the indicator function 
$\phi=\infty \cdot \eins_{(a,\infty)}$ with $a\geq 0$, which allows for all transition probabilities in a Wasserstein neighborhood around the reference process. Then, the worst-case expectation at time $t\geq 0$ of a suitable function $ u $ in state $x$ is given by
	\[ \big(I(t)u\big)(x):=\sup \int_X u\big(\psi_t(x)+z\big)\,\nu(\d z), \]
	where the supremum is taken over all $ \nu $ with Wasserstein distance $ \Wc(\mu_t,\nu)\leq a t$ and the level of uncertainty $a t$ is proportional to the time horizon $t$. The family $(I(t))_{t\geq 0}$ can be seen as solutions to static optimization problems, which, in general, do not fulfill the Dynamic Programming Principle or the Chapman-Kolmogorov equations. In order to come up with a dynamically consistent family (semigroup), we split the optimization on $[0,t]$ into a composition of static optimization problems $I(\pi)=I(t_1)\circ I(t_2-t_1)\circ\cdots\circ I(t-t_n)$ for a partition $\pi=\{0< t_1<\cdots<t_n<t\}$.
	A crucial step in the construction is the monotonicity $I(\pi)\ge I(\pi^\prime)$ 
	for partitions $\pi\subset \pi^\prime$ of $[0,t]$, see Lemma~\ref{lem.monotonicity}.\ This allows to define $S(t)$ as a limit of $I(\pi)$ over progressively finer partitions. We emphasise that the operator $S(t)$ does \emph{not} depend on the choice of the approximating partitions, see Remark \ref{rem:Ipi}.\
    Our main result is Theorem \ref{thm.semigroup}, which states that the family $(S(t))_{t\geq 0}$ forms a strongly continuous convex semigroup on a suitable function space. Moreover, the infinitesimal behaviour of the semigroup, described in terms of its generator $A$, relates to the generator $B$ of the reference Markov process $(\Xi_t^x)$ via
	\[
	\big(Au\big)(x)=\big(Bu\big)(x)+\phi^*\big(\|u'(x)\|\big),
	\]
	where $\phi^*$ is the convex conjugate of the penalisation function $\phi$ and $u'$ denotes the Fr\'echet derivative of the function $u$. 
	In particular, under a proper definition of proximity (e.g., for a geometric Brownian motion, we consider a logarithmic version of the Wasserstein distance),
	the uncertainty in the generator does not depend on the order of the Wasserstein distance. Our results in Section \ref{sec.wasserstein} extend and complement the work \cite{bartl2020limits} as follows. 
        While the analysis in \cite{bartl2020limits} is performed on the space $\Con_0$ endowed with the supremum norm, we are working here with locally uniform convergence on a suitable space of (linearly) bounded continuous functions. Moreover, our construction allows for state-dependent dynamics such as geometric Brownian motions or Ornstein-Uhlenbeck processes. On a technical level, we make use of a suitable compact operator $K$ in order to enforce the compactness of balls. In finite dimensions, the operator $K$ can be neglected. Yet, in infinite-dimensional situations, it enables us to treat, for example, Ornstein-Uhlenbeck processes with an unbounded generator in the drift term. A minor yet subtle issue is the a priori chosen dyadic decomposition (partition) of the time interval $[0,t]$ for the definition of the operators $(S(t))_{t\geq 0}$ in \cite{bartl2020limits}. In the present work, we also use dyadic partitions for the definition of the nonlinear semigroup. However, in a second step, we show that the latter does \emph{not} depend on this particular choice of approximating partitions.
    
	In Section \ref{sec.remarks}, we discuss possible extensions of the setup and derive sensitivity bounds for the convex semigroup $(S(t))_{t\geq 0}$ relative to
	\[
	\big(T(t)u\big)(x)=\E\big[u\big(\Xi_t^x\big)\big]=\int_X u\big(\psi_t(x)+y\big)\, \mu_t(\d y).
	\]
	Note that $(T(t))_{t\geq 0}$ is the transition semigroup corresponding to the reference Markov process. Due to our assumption on the additive separability of state and randomness, the expectation $\E[u(\Xi_t^x)]$ can be computed using a Monte-Carlo simulation for the law $\mu_t$ (independent of $x$) and simply adding the deterministic expression $\psi_t(x)$ to the simulated random variable $Y_t$, see Section \ref{sec.sensitivity} for further details. The results and possible applications are illustrated in various examples in Section \ref{sec.examples}.%, where we also depict sensitivity bounds for the resulting semigroup.
	
	The present construction is related to the one of the Nisio semigroup \cite{MR0451420}, see Section \ref{sec:param}, where the family $(I(t))_{t\ge 0}$ is replaced by a supremum over a class of linear semigroups, cf.~Denk et al.\ \cite{denk2020semigroup} and Nendel and R\"ockner \cite{roecknen}. In contrast to the Nisio semigroup, where $I(\pi)$ is increasing over refining partitions, the Wasserstein robust semigroup is obtained as a decreasing limit. Although the two approaches are based on a different notion of uncertainty (parametric vs.\ nonparametric) and a different monotonicity over refining partitions (increasing vs.\ decreasing), in standard situations, they result in the same convex semigroup $(S(t))_{t\geq 0}$ -- the Nisio semigroup related to parametric drift uncertainty. An alternative construction, which does not rely on monotonicity arguments, was recently proposed by Blessing and Kupper \cite{blessingkupper}. In the multiperiod optimization problem $I(\pi)$, we consider the Wasserstein distance on the level of transition kernels, which is related to the nested distance introduced by Pflug and Pichler \cite{pflug2012distance, pflug2014multistage}. For further results on adapted Wasserstein distances, we refer to  Backhoff-Veraguas et al.\ \cite{backhoff2019all}, Backhoff-Veraguas et al.\ \cite{backhoff2020estimating}, and the references therein. Additionally, Markov processes under Wasserstein perturbations are studied in Eckstein \cite{eckstein2019extended} and Yang \cite{yang2017convex}.
	For the broad topic of model uncertainty in Markovian systems, we refer to Wiesemann et al.\ \cite{wiesemann2013robust} and Dentcheva and Ruszczy\'{n}ski \cite{MR3807945} in the context of Markov decision processes, Rudolf and Schweizer \cite{rudolf2018perturbation} for applications to Markov chain Monte Carlo algorithms, and De Cooman \cite{MR2535022}, Hartfiel \cite{MR1725607}, Krak et al.\ \cite{MR3679158}, and \v{S}kulj \cite{skulj2009discrete} for Markov chains with interval probabilities.
	
	The paper is organized as follows. In Section \ref{sec.setup}, we introduce the setup, all standing assumptions, and basic properties. Wasserstein robust semigroups are discussed in Section \ref{sec.wasserstein}. Therein, we present the worst-case operator, as well as the iterated optimization scheme. The main result is Theorem \ref{thm.semigroup}, which provides the limiting convex semigroup and its infinitesimal generator. In Section \ref{sec.remarks}, we provide a detailed discussion on the setup and our standing assumptions, present implications for sensitivity bounds, and discuss the relation to nonlinear PDEs and parametric drift uncertainty. The paper concludes with several examples in Section \ref{sec.examples}.

	\section{Setup and basic properties}\label{sec.setup}
	Throughout, let $(X,\|\cdot\|)$ be a separable Banach space with Borel $\sigma$-algebra $\Bc(X)$. We consider a fixed compact linear operator $K\colon X\to X$, i.e., $K$ is linear and the set $\big\{Kx\colon \|x\|\leq r\}$ is compact for all $r\geq 0$. We further assume that $\{Kx\colon x\in X\}$ is dense in $X$ and that $\|Kx\|\leq \|x\|$ for all $x\in X$. Note that, for a finite-dimensional space $X$, the operator $K$ can simply be chosen to be the identity. In an infinite-dimensional setting, $K$ could be the resolvent of a generator of a compact $C_0$-semigroup on $X$. We refer to Section \ref{sec.examples} for further details. We denote by $\Lip=\Lip^K(X)$ the space of all functions $u\colon X\to \R$ satisfying
        \begin{equation}\label{eq.lipcondK}
         |u(x_1)-u(x_2)|\leq L\|K(x_1-x_2)\|\quad \text{for all }x_1,x_2\in X \text{ and some }L\geq 0.
        \end{equation}
         For $u\in \Lip$, we denote the smallest constant $L\geq 0$ such that \eqref{eq.lipcondK} is satisfied by $\|u\|_\Lip$. Moreover, let $\Lipb$ denote the subspace of all $u\in \Lip$ with $\|u\|_\infty:=\sup_{x\in X}|u(x)|<\infty$. Note that $u\in \Lipb$ if and only if there exists a bounded Lipschitz continuous function $u_0\colon X\to \R$ with $u(x)=u_0(Kx)$ for all $x\in X$.\footnote{For the nontrivial direction, define $u_0(x):=\sup_{y\in X}\big( u(y)-L\|x-Ky\|\big)$, for $x\in X$, with $L\geq \|u\|_\Lip$. The boundedness of $u_0$ follows from the density of the image $\{Kx\colon x\in X\}$.} Since $K$ is continuous, $\Lipb$ is a subspace of the space of all bounded Lipschitz continuous functions $X\to \R$.
        
        For a sequence $(u_n)_{n\in \N}$ of bounded continuous functions $X\to \R$ and $u\colon X\to \R$, we write $u_n\to u$ as $n\to \infty$ if
        \begin{equation}\label{eq.pfeil}
         \sup_{n\in \N}\|u_n\|_\infty<\infty\quad \text{and}\quad \lim_{n\to \infty}\sup_{\|x\|\leq r}|u(x)-u_n(x)|=0\quad \text{for all }r\geq 0.
        \end{equation}
        By $\BUC=\BUC^K(X)$, we denote the space of all $u\colon X\to \R$, for which there exists a sequence $(u_n)_{n\in \N}\subset \Lipb$ with $u_n\to u$ as $n\to \infty$ in the sense of \eqref{eq.pfeil}.\footnote{For a sequence $(u_n)_{n\in \N}\subset\BUC$ with $u_n\to u$ as $n\to \infty$, it follows that $u\in \BUC$. This can be seen by approximating $u_n$ with the inf-convolution $u_{n,k}\colon X\to \R,\; x\mapsto \inf_{y\in X}\big(u_n(y)+k\|Kx-Ky\|\big)$. } By construction, $\BUC$ is a subspace of the space of all bounded functions $u\colon X\to \R$, which are uniformly continuous on bounded sets.
	
	Let $p\in (1,\infty)$, and consider the set $\Pc_p(X)$ of all probability measures $\mu$ on $\Bc(X)$ with finite $p$-th moment $\int_X \|y\|^p\, \mu (\d y)<\infty$. In a similar way, we consider $\Pc_p(X\times X)$ with $X$ being replaced by $X\times X$ together with the norm $\|(x_1,x_2)\|:=\|x_1\|+\|x_2\|$ for $x_1,x_2\in X$. We endow $\Pc_p(X)$ with the $p$-Wasserstein distance $\Wc_p$, which is defined as
	\begin{equation*}
		\Wc_{p}(\mu,\nu):= \bigg(\inf_{\pi \in \Cpl(\mu ,\nu)}\int_{X\times X}\|y-z\|^{p}\,\pi(\d y,\d z)\bigg)^{\tfrac1p}\quad \mbox{for }\mu,\nu \in \Pc_p(X),
	\end{equation*}
	where $\Cpl(\mu,\nu)$ denotes the set of all probability measures on $\Bc(X\times X)$ with first marginal $ \mu $ and second marginal $ \nu $, respectively. Then, $\Wc_p$ defines a metric on $\Pc_p(X)$ and, in a similar way, the Wasserstein metric $\Wc_p$ can be defined on $\Pc_p(X\times X)$. Throughout, we endow $\Pc_p(X\times X)$ with the topology generated by $\Wc_p$.
	
	In the following remark, we collect some basic properties related to the Wasserstein distance $\Wc_p$ that we frequently use in this work.\ For a detailed discussion on transport distances and their properties, we refer to Ambrosio et al.\ \cite{MR2401600} or Villani \cite{villani2008optimal}.
	
	\begin{remark}\label{rem.Wp}\
	 \begin{enumerate}
	  \item[a)] Using Minkowski's inequality, $\Cpl(\mu,\nu)\subset \Pc_p(X\times X)$ for all $\mu,\nu\in \Pc_p(X)$.
	  \item[b)] Let $u\colon X\to \R$ and $L\geq 0$ with $|u(y)-u(z)|\leq L\|y-z\|$ for all $y,z\in X$.
	  Then, for all $\mu,\nu\in \Pc_p(X)$ and $\pi\in \Cpl(\mu,\nu)$, Minkowski's inequality imples that
	  \begin{align*}
	   \Bigg|\bigg(\int_X|u(y)|^p\,\mu(\d y)\bigg)^{˝\tfrac1p}-\bigg(\int_X|u(z)|^p\,\nu(\d z)\bigg)^{\tfrac1p}\Bigg|&\leq \bigg(\int_{X\times X}|u(y)-u(z)|^p\,\pi(\d y,\d z)\bigg)^{\tfrac1p}\\
	   	   &\leq L \bigg(\int_{X\times X}\| y-z\|^p\,\pi(\d y,\d z)\bigg)^{\tfrac1p}.
	  \end{align*}
	  In particular, the map $\Pc_p(X)\to \R, \;\mu\mapsto \int_X |u(y)|^p\,\mu(\d y)$ is continuous.
	  \item[c)] For $\mu\in \Pc_p(X)$, we have \[\Wc_p(\mu,\delta_0)=\bigg(\int_X \|y\|^p\, \mu(\d y)\bigg)^{\tfrac1p},\] where $\delta_0$ denotes the Dirac measure with barycenter $0$.
	  \item[d)]  For $\mu,\nu\in \Pc_p$, by the triangular inequality and H\"older's inequality,
	   \[
	  \int_X \|z\|\, \nu(\d z)\leq  \Wc_p(\mu,\nu)+ \int_X \|y\|\, \mu(\d y).
	  \]
	 \end{enumerate}
	\end{remark}

	On $X$, we consider a time-homogeneous Markov process with transition probabilities $(p_t)_{t\geq0}$ of the form
	\[
	p_t(x,B):=\mu_t\big(\{y\in X\colon\psi_t(x)+y\in B\}\big)\quad\mbox{for all }t\geq 0,\; x\in X,\text{ and }B\in\Bc(X),
	\]
	where $(\mu_{t})_{t \geq 0} \subset \Pc_{p}(X)$ is a family of probability measures and $ (\psi_{t})_{t \geq 0} $ is a family 
	of continuous maps $\psi_t\colon X \to X$. In particular, we assume that the transition probabilities satisfy the Chapman-Kolmogorov equations, i.e.,
	\begin{equation}\label{eq.consistency}
		\int_{X} u(\psi_{t+s}(x)+y_{t+s}) \,\mu_{t+s}(\d y_{t+s})= \int_{X} \int_{X} u\Big(\psi_{s}\big(\psi_{t}(x)+y_{t}\big)+y_{s}\Big)\,\mu_{s}(\d y_{s})\,\mu_{t}(\d y_{t})
	\end{equation}
	for all $s,t \geq 0$, $u\in \Cb$, and $x \in X$. In other words, the Markov process belonging to the family of transition kernels $(p_t)_{t\geq 0}$ is of the form
	\begin{equation}\label{eq.markovdecomp}
	\Xi_t^x=\psi_t(x)+Y_t\quad \text{with}\quad Y_t\sim \mu_t\quad \text{for all }t\geq 0\text{ and } x\in X.
	\end{equation}

	\begin{assumption}
		Throughout, we work under the following assumptions:
		\begin{enumerate}
			\item[(A1)] There exists a constant $c\geq 0$ such that, for all $t\geq 0$ and $x_1,x_2\in X$,
			\begin{equation}\label{eq.lipcond2}
				\|\psi_t(x_1)-\psi_t(x_2)\|\leq e^{c t}\|x_1-x_2\|. 
			\end{equation}
			\item[(A2)] We assume that $$K\psi_t(x)=\psi_t(Kx)\quad \text{for all }t\geq 0\text{ and }x\in X.$$ Moreover, $\psi_0(x)=x$ for all $x\in X$, and
			\[
			\lim_{h\downarrow 0}\sup_{\|x\|\leq r}\|\psi_h(Kx)-Kx\|=0\quad \text{for all }r\geq0.
			\]
			\item[(A3)] We assume that $\Wc_p(\mu_h,\delta_0)\to 0$ as $h\downarrow 0$ or, in other words,
			\[
			\lim_{h\downarrow 0}\int_X \|y\|^p\,\mu_h(\d y)=0.
			\]
		\end{enumerate}
	\end{assumption}
	
	We briefly discuss our assumptions in the following remark.
	
	\begin{remark}\label{rem.observations0}\
		\begin{enumerate}
			\item[a)] Note that our global Assumption (A2) implies that, for all $u\in \BUC$ and all $r\geq 0$,
			\[
			\sup_{\|x\|\leq r}\big|u\big(\psi_h(x)\big)-u(x)\big|\to 0\quad \text{as }h\downarrow 0.
			\]
			Indeed, for $u\in \Lipb$,
			\[
			 \sup_{\|x\|\leq r}\big|u\big(\psi_h(x)\big)-u(x)\big|\leq \|u\|_\Lip \sup_{\|x\|\leq r}\|\psi_h(Kx)-Kx\|\to 0 \quad\text{as }h\downarrow 0.
			\]			
			For general $u\in \BUC$, the statement can be obtained by the following argument that will be used in a similar form on various occasions. Let $u\in \BUC$. Then, there exists a sequence $(u_n)_{n\in\N}\in \Lipb$ with $u_n\to u$. By our global Assumptions (A1) and (A2), there exists some $h_0>0$ such that
			\begin{equation}\label{eq.Mr}
			M_r:=\sup_{h\in [0,h_0]}\sup_{\|x\|\leq r}\|\psi_h(x)\|\leq \sup_{h\in [0,h_0]}\|\psi_h(0)\|+e^{c h_0}r<\infty\quad \text{for all }r\geq 0.
			\end{equation}
			Therefore, for arbitrary $\ep>0$,
			\begin{align*}
			 \sup_{\|x\|\leq r}\big|u\big(\psi_h(x)\big)-u(x)\big|&\leq \sup_{\|x\|\leq r}\big|u_n\big(\psi_h(x)\big)-u_n(x)\big| +2\sup_{\|\xi\|\leq M_r}|u(\xi)-u_n(\xi)|\\
			 &< \sup_{\|x\|\leq r}\big|u_n\big(\psi_h(x)\big)-u_n(x)\big|+\ep
			\end{align*}
                        for $n\in \N$ sufficiently large. Letting, first, $h\downarrow 0$ and then $\ep\downarrow 0$, it follows that
                        \[
                        \lim_{h\downarrow 0}\sup_{\|x\|\leq r}\big|u\big(\psi_h(x)\big)-u(x)\big|= 0\quad \text{for all }r\geq0.
                        \]
			\item[b)] We assume that the family $(p_t)_{t\geq 0}$ of transition kernels is such that the related Markov process is of the form
			\[
			\Xi_t^x=\psi_t(x)+Y_t.
			\]
			However, also more general dynamics can be considered.\ For example, the additive operation $+$ could be the multiplication on the positive half line $(0,\infty)$, leading to geometric dynamics
			\[
			\Xi_t^x=xY_t.
			\]
			For more details, we refer to Section \ref{sec.discussion}, in particular, Example \ref{ex.GBM}.
			%\item[c)] Condition \eqref{eq.lipcond2} can be weakened to
			%\[
			%\|\psi_t(x_1)-\psi_t(x_2)\|\leq e^{L t}\|x_1-x_2\|\quad \text{for all }t\geq 0\text{ and }x_1,x_2\in X
			%\]
			%with an arbitrary constant $L\geq 0$, see Section \ref{sec.extA1}. 
			\item[c)] Consider the family $T=\big(T(t)\big)_{t\geq 0}$, given by
			\begin{equation}\label{Smu}
				\big(T(t)u\big)(x):=\int_X u(\xi) \, p_t(x,\d \xi)=\int_X u\big(\psi_t(x)+y\big)\,\mu_t(\d y)
			\end{equation}
			for all $t\geq 0$, $u\in \BUC$, and $x\in X$. Due to the Chapman-Kolmogorov equations \eqref{eq.consistency}, it follows that $T$ is a semigroup on $\BUC$. More precisely, it is the transition semigroup of the Markov process $(\Xi_t^x)$.\ Due to our global assumptions, the semigroup $T$ is strongly continuous on $\BUC$, see Remark \ref{remark.Smu}, below.
		\end{enumerate}
		
	\end{remark}
	
	Throughout, we consider a convex lower semicontinuous function $\phi\colon [0,\infty)\to [0,\infty]$ with $\phi(0)=0$ and $\phi(v)\neq 0$ for some $v> 0$. Note that $\phi$ is nondecreasing and continuous on ${\dom(\phi)}:=\{v\in [0,\infty)\colon \phi(v)<\infty\}$.\ Since $\phi$ is nondecreasing, either $\dom(\phi)=[0,\infty)$ or $\dom(\phi)=[0,a]$ for some $a\geq 0$. We further assume that the map $[0,\infty)\to [0,\infty],\; v\mapsto \phi\big(v^{1/p}\big)$ is convex.\ Since $\phi\not\equiv 0$, this implies that
	\begin{equation}\label{eq.asymptotic}
		\liminf_{v\to \infty}\frac{\phi(v)}{v^p}>0.
	\end{equation}
        In particular, the conjugate
	$$\phi^\ast(w):=\sup_{v\in[0,\infty)}\big(vw-\phi(v)\big),\quad \text{for }w\geq 0,$$ defines a continuous function $[0,\infty)\to [0,\infty)$ with $\phi^*(0)=0$. Typical examples for $\phi$ are $[0,\infty)\to [0,\infty), \;v\mapsto v^p$ or $\phi_a:=\infty\cdot \eins_{(a,\infty)}$, i.e., $\varphi_a(v)=0$, for $v\in[0,a]$,  and $\varphi_a(v)=\infty$, for $v>a$, with $a\geq 0$.
	
	\section{Wasserstein robust semigroups}\label{sec.wasserstein}
	
	In this section, we aim to study a distributionally robust version of the semigroup $T$ given in Equation \eqref{Smu}.\ We start with a preliminary version by considering the following family of operators:
	
	\begin{definition}
		For $t\geq 0$, $u\in \BUC$, and $x\in X$, we define
		\begin{equation}\label{eq.isometry}
			\big(I(t)u\big)(x)=\sup_{\nu\in \Pc_p(X)}\bigg(\int_X u\big(\psi_t(x)+z\big)\,\nu(\d z)-\phi_t\big(\Wc_p(\mu_t,\nu)\big)\bigg),
		\end{equation}
		where  $\phi_t(v):=t\phi\big(e^{-c t}\frac{v}{t}\big)$, for $v\geq 0$ and $t>0$, and $\varphi_0:=\infty\cdot \eins_{(0,\infty)}$.
	\end{definition}
	
	\begin{remark}\label{rem.basic}\
	\begin{enumerate}
	 \item[a)] Let $t\geq 0$ and $u_1,u_2\in \BUC$. Then,
         \begin{equation}\label{eq.rembasic1}
	  \|I(t)u_1-I(t)u_2\|_\infty\leq \|u_1-u_2\|_\infty.
	 \end{equation}
	 Moreover, $I(t)0=0$, which implies that $\|I(t)u\|_\infty\leq \|u\|_\infty$ for all $u\in \BUC$. In fact,
		\begin{align*}
			\big|\big(I(t)u_1\big)(x)-\big(I(t)u_2\big)(x)\big|&\leq \sup_{\nu\in \Pc_p(X)}\int_X \big|u_1\big(\psi_t(x)+z\big)-u_2\big(\psi_t(x)+z\big)\big|\, \nu(\d z)\\
			&\leq  \|u_1-u_2\|_\infty\quad \text{for all }x\in X.
		\end{align*}
		By taking the supremum over all $x\in X$, the inequality \eqref{eq.rembasic1} follows.
	 \item[b)] Let $t\geq 0$ and $u\in \Lipb$. Then,
	 \[
	 \|I(t)u\|_{\Lip}\leq e^{c t}\|u\|_\Lip.
	 \]
	 In particular, $I(t)u\in \Lipb$ for all $u\in \Lipb$. Indeed, due to our global assumptions (A1) and (A2), 
		\begin{align*}
			\big|\big(I(t)u\big)(x_1)-\big(I(t)u\big)(x_2)\big|&\leq \sup_{\nu\in \Pc_p(X)}\int_X \big| u\big(\psi_t(x_1)+z\big)-u\big(\psi_t(x_2)+z\big)\big|\, \nu(\d z)\\
			&\leq e^{c t}\|u\|_\Lip \|K(x_1-x_2)\|\quad \text{for all }x_1,x_2\in X.
		\end{align*}
	\end{enumerate}	 
	\end{remark}

	We start with the following observations.
	
	\begin{lemma}\label{lemma1}
	 For all $a\geq 0$, $t\geq 0$, $u\in \Lip$, and $x\in X$, let
	 \begin{equation}\label{eq.lem10}
	  \big(I_a(t)u\big)(x):=\sup_{\Wc_p(\mu_t,\nu)\leq at} \int_X u\big(\psi_t(x)+z\big)\, \nu(\d z).
	 \end{equation}
	 For every $L\geq 0$, there exists a constant $a\geq 0$, depending only on $\phi$, $c$, and $L$ such that
         \begin{enumerate}
          \item[a)] for every $u\in \Lip$ with $\|u\|_{\Lip}\leq L$,
          \begin{equation}\label{eq.lem1}
           \big(I(t)u\big)(x)=\sup_{\Wc_p(\mu_t,\nu)\leq at} \bigg(\int_X u\big(\psi_t(x)+z\big)\, \nu(\d z)-\phi_t\big(\Wc_p(\mu_t,\nu)\big)\bigg)
          \end{equation}
          for all $t\in [0,1]$ and $x\in X$,
          \item[b)] for all $u_1,u_2\in \Lip$ with $\|u_i\|_\Lip\leq L$ for $i=1,2$,
          \begin{equation}\label{eq.lem2}
	  I(t)u_1 -I(t)u_2\leq I_a(t)(u_1-u_2) \quad \text{for all }t\in[0,1].
	 \end{equation}
         \end{enumerate}
	\end{lemma}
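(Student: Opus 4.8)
The idea is that, when $\|u\|_\Lip\le L$, the superlinear growth of $\phi$ forces any measure $\nu$ far from $\mu_t$ to be suboptimal in \eqref{eq.isometry}, so the penalised problem and the $\Wc_p$-constrained problem coincide. First I would fix the constant: since $p>1$, condition \eqref{eq.asymptotic} implies $\phi(v)/v\to\infty$ as $v\to\infty$, so there is $s_0\ge0$, depending only on $\phi$, $c$, and $L$, with $\phi(s)\ge Le^{c}s$ for all $s\ge s_0$, and I set $a:=s_0e^{c}$. The case $t=0$ is trivial, as both sides of \eqref{eq.lem1} then reduce to $\int_X u(\psi_0(x)+z)\,\mu_0(\d z)$ and both sides of \eqref{eq.lem2} to $\int_X(u_1-u_2)(\psi_0(x)+z)\,\mu_0(\d z)$; so I assume $t\in(0,1]$ throughout.

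For (a), the key elementary estimate is that, for $u\in\Lip$ with $\|u\|_\Lip\le L$ (hence $|u(y)-u(z)|\le L\|K(y-z)\|\le L\|y-z\|$) and every $\nu\in\Pc_p(X)$,
\[
\int_X u\big(\psi_t(x)+z\big)\,\nu(\d z)-\int_X u\big(\psi_t(x)+y\big)\,\mu_t(\d y)\le L\,\Wc_p(\mu_t,\nu),
\]
which follows by writing the left-hand side as $\int(u(\psi_t(x)+z)-u(\psi_t(x)+y))\,\pi(\d y,\d z)$ for $\pi\in\Cpl(\mu_t,\nu)$, bounding it by $L\int\|y-z\|\,\pi(\d y,\d z)$, applying H\"older's inequality and taking the infimum over $\pi$, exactly as in Remark~\ref{rem.Wp}~b). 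Next I would check that any $\nu$ with $r:=\Wc_p(\mu_t,\nu)>at$ is dominated by the choice $\nu=\mu_t$: putting $s:=e^{-ct}r/t$ one gets $s>ae^{-ct}=s_0e^{c(1-t)}\ge s_0$ (using $c\ge0$ and $t\le1$), hence $\phi_t(r)=t\phi(s)\ge tLe^{c}s\ge tLe^{ct}s=Lr$, so by the previous display the value of $\nu$ in \eqref{eq.isometry} is at most $\int_X u(\psi_t(x)+y)\,\mu_t(\d y)+Lr-\phi_t(r)\le\int_X u(\psi_t(x)+y)\,\mu_t(\d y)$, which is precisely the value at the admissible measure $\mu_t$ (recall $\Wc_p(\mu_t,\mu_t)=0\le at$ and $\phi_t(0)=0$). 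Thus the supremum in \eqref{eq.isometry} is unchanged when restricted to $\{\nu:\Wc_p(\mu_t,\nu)\le at\}$, which is \eqref{eq.lem1}.

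For (b) I would work pointwise in $x\in X$. Fix $\ep>0$; since $I(t)u_1(x)$ is finite, part (a) provides $\nu^\ast\in\Pc_p(X)$ with $\Wc_p(\mu_t,\nu^\ast)\le at$ and (necessarily) $\phi_t(\Wc_p(\mu_t,\nu^\ast))<\infty$ such that
\[
\big(I(t)u_1\big)(x)-\ep\le\int_X u_1\big(\psi_t(x)+z\big)\,\nu^\ast(\d z)-\phi_t\big(\Wc_p(\mu_t,\nu^\ast)\big).
\]
Using $\nu^\ast$ as a competitor in \eqref{eq.isometry} for $u_2$ yields $\big(I(t)u_2\big)(x)\ge\int_X u_2(\psi_t(x)+z)\,\nu^\ast(\d z)-\phi_t(\Wc_p(\mu_t,\nu^\ast))$; subtracting this from the previous display, the finite penalty term cancels and
\[
\big(I(t)u_1\big)(x)-\big(I(t)u_2\big)(x)-\ep\le\int_X(u_1-u_2)\big(\psi_t(x)+z\big)\,\nu^\ast(\d z)\le\big(I_a(t)(u_1-u_2)\big)(x),
\]
the last step because $\Wc_p(\mu_t,\nu^\ast)\le at$ makes $\nu^\ast$ admissible in \eqref{eq.lem10}. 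Letting $\ep\downarrow0$ and passing to the supremum over $x$ gives \eqref{eq.lem2}.

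The point needing the most care is the choice of $a$ and its uniformity in $t$: because the penalty in \eqref{eq.isometry} is the time-rescaled object $\phi_t(v)=t\phi(e^{-ct}v/t)$, the comparison between the Lipschitz gain $Lr$ and $\phi_t(r)$ has to hold uniformly for $t\in(0,1]$, which is exactly why the restriction $t\le1$ appears in the statement and why the factor $e^{c}$ (rather than $e^{ct}$) is built into $a=s_0e^{c}$. The remaining points are routine bookkeeping: that $I(t)u$ and $I_a(t)u$ are finite for $u\in\Lip$, and that the near-maximizer $\nu^\ast$ above automatically has finite penalty, both follow from the linear growth of Lipschitz functions together with the uniform first-moment bound $\int_X\|z\|\,\nu(\d z)\le\Wc_p(\mu_t,\nu)+\int_X\|y\|\,\mu_t(\d y)$ of Remark~\ref{rem.Wp}~d).
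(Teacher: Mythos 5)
Your proof is correct. For part~(a) you take a slightly different route than the paper: the paper fixes $J:=\{v\geq 0:\phi(e^{-c}v)\leq 1+Lv\}=[0,a]$ and then shows that any $\nu$ whose value in \eqref{eq.isometry} comes within $t$ of the supremum must satisfy $\Wc_p(\mu_t,\nu)/t\in J$, so near-optimizers always live in the constrained set; you instead pick $a=s_0e^c$ with $\phi(s)\geq Le^cs$ for $s\geq s_0$, and show directly that any $\nu$ with $\Wc_p(\mu_t,\nu)>at$ has $\phi_t(\Wc_p(\mu_t,\nu))\geq L\Wc_p(\mu_t,\nu)$ and is therefore \emph{dominated} by the admissible competitor $\mu_t$. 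Both arguments exploit exactly the same ingredients — the superlinearity \eqref{eq.asymptotic}, the Lipschitz bound $|\int u\,\d\nu-\int u\,\d\mu_t|\leq L\Wc_p(\mu_t,\nu)$, the scaling $\phi_t(v)=t\phi(e^{-ct}v/t)$, and the monotonicity of $\phi$ together with $e^{-ct}\geq e^{-c}$ for $t\in[0,1]$ — so this is a cosmetic variation rather than a genuinely new idea; your version perhaps reads a little more transparently because it avoids the $\epsilon$-near-optimality bookkeeping. Part~(b) matches the paper's argument: both deduce it from part~(a) by using the same (near-)optimal competitor for $u_1$ as a test measure for $u_2$ and cancelling the finite penalty, and your remark that the near-maximizer automatically has finite penalty (and that $I(t)u$ is finite for $u\in\Lip$ via Remark~\ref{rem.Wp}~d)) correctly supplies the finiteness needed for the cancellation.
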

	
	\begin{proof}
          Let $L\geq 0$ and $$J:=\big\{v\in [0,\infty)\colon \phi(e^{-c}v)\leq 1+Lv\big\}.$$ By Equation  \eqref{eq.asymptotic}, $J=[0,a]$ for some $a\geq 0$, depending only on $\phi$, $c$, and $L$. Now, let $u\in \Lip$ with $\|u\|_{\Lip}\leq L$, $t\geq 0$, and $x\in X$. For $t=0$, the equality stated in \eqref{eq.lem1} is trivial.\ Therefore, assume that $t\in(0,1]$. Then, there exists some $\nu\in \Pc_p(X)$ with
          \[
           \int_X u\big(\psi_t(x)+y\big)\, \mu_t(\d y)\leq \big(I(t)u\big)(x)\leq t+\int_X u\big(\psi_t(x)+z\big)\, \nu(\d z)-\phi_t\big(\Wc_p(\mu_t,\nu)\big),
          \]
         which leads to the inequality
         \begin{align*}
           \phi\Big(e^{-c t}\tfrac{\Wc_p(\mu_t,\nu)}{t}\Big)&\leq 1+\frac{1}{t}\int_X\int_X \Big(u\big(\psi_t(x)+z\big)-u\big(\psi_t(x)+y\big)\Big)\,\nu(\d z)\, \mu_t(\d y)\\
          &\leq 1+L\tfrac{\Wc_p(\mu_t,\nu)}{t}.
         \end{align*}
         Therefore, since $t\in(0,1]$, it is sufficient to take the supremum in \eqref{eq.isometry} over the set
         \begin{align*}
         \Big\{\nu\in \Pc_p(X)\colon \tfrac{\Wc_p(\mu_t,\nu)}{t}\in J\Big\}&=\Big\{\nu\in \Pc_p(X)\colon \tfrac{\Wc_p(\mu_t,\nu)}{t}\in [0,a] \Big\}.
         \end{align*}
          The proof of part a) is complete. In order to prove part b), let $u_1,u_2\in \Lip$ with $\|u_i\|_\Lip\leq L$ for $i=1,2$, $t\in[0,1]$, and $x\in X$. Then, by part a),
         \[
          \big(I(t)u_1\big)(x) -\big(I(t)u_2\big)(x)\leq \sup_{\Wc_p(\mu_t,\nu)\leq at}\int_X\Big(u_1\big(\psi_t(x)+z\big)-u_2\big(\psi_t(x)+z\big)\Big)\,\nu(\d z).
         \]
         The proof is complete.
	\end{proof}
	
	Note that, for $u\in \Lipb$, the definition of $I_a(t)u$ by virtue of \eqref{eq.lem10} is exactly the same as \eqref{eq.isometry} with $\phi=\phi_a=\infty\cdot \eins_{(a,\infty)}$.
	
	\begin{lemma}\label{lem.Ib}
	 For every $C\geq 0$, there exists a constant $b\geq 0$, depending only on $\varphi$, $c$, $p$, and $C$ such that
	 \[
	  I(t)u=\sup_{\Wc_p(\mu_t,\nu)\leq bt^\alpha}\bigg(\int_X u\big(\psi_t(x)+z\big)\, \nu(\d z)-\phi_t\big(\Wc_p(\mu_t,\nu)\big)\bigg)
	 \]
        for all $u\in \BUC$ with $\|u\|_\infty\leq C$, $t\in [0,1]$, and $x\in X$, where $\alpha:=\frac{p-1}{p}$.
	\end{lemma}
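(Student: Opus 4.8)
The plan is to exploit the superlinear growth of $\phi$ to make measures $\nu$ with $\Wc_p(\mu_t,\nu)$ large so heavily penalised that, in \eqref{eq.isometry}, they are strictly dominated by the choice $\nu=\mu_t$. For $t=0$ there is nothing to prove, since $bt^\alpha=0$ and $\varphi_0=\infty\cdot\eins_{(0,\infty)}$ already restrict the supremum to $\nu=\mu_0$; so fix $t\in(0,1]$, $x\in X$, and $u\in\BUC$ with $\|u\|_\infty\le C$. First I would record a coercivity bound for $\phi$: the function $g(w):=\phi\big(w^{1/p}\big)$ is convex on $[0,\infty)$ with $g(0)=0$, so $w\mapsto g(w)/w$ is nondecreasing and hence $\ell:=\lim_{w\to\infty}g(w)/w=\liminf_{v\to\infty}\phi(v)/v^p$, which is positive by \eqref{eq.asymptotic}. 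Choosing a finite $\kappa\in(0,\ell)$ we obtain $w_0\ge0$ with $g(w)\ge\kappa w$ for $w\ge w_0$, and since $g\ge0$ this gives $g(w)\ge\kappa w-\kappa w_0$ for all $w\ge0$, i.e.
\[
\phi(v)\ge\kappa v^p-M\qquad\text{for all }v\ge0,\quad\text{with }M:=\kappa w_0\ge0,
\]
where $\kappa$ and $M$ depend only on $\phi$ and $p$.

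Next I would transfer this to $\phi_t$ and fix the constant $b$. From $\phi_t(v)=t\phi\big(e^{-ct}v/t\big)$, the bound above, and $c\ge0$, $t\le1$, one gets $\phi_t(v)\ge\kappa e^{-pct}\,v^p/t^{p-1}-Mt\ge\kappa e^{-pc}\,v^p/t^{p-1}-M$ for all $v\ge0$. Setting $\alpha:=\tfrac{p-1}{p}$ and
\[
b:=\Big(\tfrac{(2C+M+1)e^{pc}}{\kappa}\Big)^{1/p},
\]
which depends only on $\varphi$, $c$, $p$, and $C$, we see that $\Wc_p(\mu_t,\nu)>bt^\alpha$ forces $\Wc_p(\mu_t,\nu)^p/t^{p-1}>b^p$, hence $\phi_t\big(\Wc_p(\mu_t,\nu)\big)>\kappa e^{-pc}b^p-M=2C+1$.

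Finally I would compare the two suprema. Since $\Wc_p(\mu_t,\mu_t)=0\le bt^\alpha$ and $\phi_t(0)=t\phi(0)=0$, the right-hand side of the claimed identity is at least $\int_X u(\psi_t(x)+y)\,\mu_t(\d y)\ge-C$. On the other hand, for every $\nu\in\Pc_p(X)$ with $\Wc_p(\mu_t,\nu)>bt^\alpha$ boundedness of $u$ and the previous estimate give
\[
\int_X u\big(\psi_t(x)+z\big)\,\nu(\d z)-\phi_t\big(\Wc_p(\mu_t,\nu)\big)\le C-\phi_t\big(\Wc_p(\mu_t,\nu)\big)<C-(2C+1)<-C,
\]
so such $\nu$ are strictly dominated by $\nu=\mu_t$ and the supremum in \eqref{eq.isometry} over all $\nu\in\Pc_p(X)$ is unchanged when restricted to $\{\nu:\Wc_p(\mu_t,\nu)\le bt^\alpha\}$, which is the assertion. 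The only genuinely delicate point is getting the powers of $t$ right in the estimate for $\phi_t$ — this is exactly what produces the exponent $\alpha=\tfrac{p-1}{p}$ instead of the linear scaling of Lemma~\ref{lemma1} — together with checking that $b$ depends only on the stated quantities; the remaining steps are routine.
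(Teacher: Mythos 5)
Your proof is correct and follows essentially the same route as the paper: both exploit the coercivity $\phi(v)\gtrsim v^p$ implied by \eqref{eq.asymptotic} (the paper phrases this as $v^p\le M(1+\phi(v))$, you as $\phi(v)\ge \kappa v^p-M$) to show that measures with $\Wc_p(\mu_t,\nu)>bt^\alpha$ incur a penalty exceeding $2\|u\|_\infty$ and hence cannot contribute to the supremum. The only cosmetic difference is that the paper starts from a near-optimizer and bounds its Wasserstein distance, while you argue directly that any $\nu$ outside the ball is strictly dominated by $\nu=\mu_t$.
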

	
	\begin{proof}
	 By \eqref{eq.asymptotic}, there exists some constant $M>0$, depending only on $\phi$ and $p$, such that $$v^p\leq M\big(1+\phi(v)\big)\quad \text{for all }v\geq 0.$$
	 Let $C\geq 0$, $u\in \BUC$ with $\|u\|_\infty\leq C$, $t\in [0,1]$, and $x\in X$. For $t=0$, the statement is trivial.\ Therefore, assume that $t>0$. Then, there exists some $\nu\in \Pc_p(X)$ with
          \[
           \int_X u\big(\psi_t(x)+y\big)\, \mu_t(\d y)\leq \big(I(t)u\big)(x)\leq 1+\int_X u\big(\psi_t(x)+z\big)\, \nu(\d z)-\phi_t\big(\Wc_p(\mu_t,\nu)\big),
          \]
         which leads to the inequality
         \[
          \phi_t\big(\Wc_p(\mu_t,\nu)\big)\leq 1+2\|u\|_\infty\leq 1+2C.
         \]
         Therefore, since $t\in [0,1]$, it is sufficient to take the supremum in \eqref{eq.isometry} over the set
         \[
          \Big\{\nu\in \Pc_p(X)\colon \phi\Big(e^{-c}\tfrac{\Wc_p(\mu_t,\nu)}{t}\Big)\leq \tfrac{1+2C}{t}\Big\}\subset \Big\{\nu\in \Pc_p(X)\colon \Wc_p(\mu_t,\nu)\leq bt^\alpha\Big\}
         \]
         with $b:=e^{c}\big(2M(1+C)\big)^{1/p}$.
	\end{proof}

	\begin{lemma}\label{lem.technical}
	 For $i=1,2$, let $(u_n^i)_{n\in \N}\subset \BUC$ with $(u_n^1-u_n^2)\to 0$ as $n\to \infty$. Then, there exists some $h_0>0$ such that $\big(I(h)u_n^1-I(h)u_n^2\big)\to 0$ uniformly in $h\in [0,h_0]$ as $n\to \infty$, i.e., $\sup_{n\in \N}\sup_{h\in [0,h_0]}\|I(h)u_n^1-I(h)u_n^2\|_\infty<\infty$ and 
	 \begin{equation}\label{eq.convI}
	 \lim_{n\to \infty} \sup_{h\in [0,h_0]}\sup_{\|x\|\leq r} \big|\big(I(h)u_n^1\big)(x)-\big(I(h)u_n^2\big)(x)\big|=0\quad \text{for all }r\geq 0.
	 \end{equation}
        \end{lemma}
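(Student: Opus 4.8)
The statement is a uniform-in-$h$ continuity estimate for the operator family $I(h)$ restricted to a sequence converging in the sense of \eqref{eq.pfeil}. The natural strategy is to reduce the general $\BUC$ case to the Lipschitz case where Lemma~\ref{lemma1}(b) gives us direct control, and then estimate the nonlinear operator $I_a(h)$ appearing there. First I would invoke the hypothesis $(u_n^1-u_n^2)\to 0$ to get $C:=\sup_{n}\max_i\|u_n^i\|_\infty<\infty$; then Lemma~\ref{lem.Ib} tells us that on $[0,1]$ the supremum defining $I(h)u_n^i$ may be restricted to $\nu$ with $\Wc_p(\mu_h,\nu)\le b h^\alpha$, a ball of radius shrinking to $0$ as $h\downarrow 0$ (since $\alpha=\tfrac{p-1}{p}>0$). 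This is the key structural fact: it says that for small $h$ the relevant perturbed measures $\nu$ are uniformly close to $\mu_h$, which in turn (by (A3)) is close to $\delta_0$, so all the mass sits near the origin.

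\textbf{Reduction to Lipschitz functions.} For the main estimate I would approximate each $u_n^i$ by a Lipschitz function. Since $(u_n^1-u_n^2)\to 0$, choose $v_n\in\Lipb$ with $\|v_n\|_\infty\le \|u_n^1-u_n^2\|_\infty+1$ (say via inf-convolution as in the excerpt's footnotes) and $\sup_{\|x\|\le \rho}|v_n(x)-(u_n^1-u_n^2)(x)|\to0$ for every $\rho$; actually it is cleaner to directly approximate $u_n^1$ and $u_n^2$ separately by Lipschitz functions $w_n^1,w_n^2$ with uniformly bounded $\|w_n^i\|_\infty$ and with $w_n^i-u_n^i$ small on bounded sets. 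Writing
\[
I(h)u_n^1-I(h)u_n^2=\big(I(h)u_n^1-I(h)w_n^1\big)+\big(I(h)w_n^1-I(h)w_n^2\big)+\big(I(h)w_n^2-I(h)u_n^2\big),
\]
the outer two terms are controlled in sup-norm by $\|u_n^i-w_n^i\|_\infty$ via Remark~\ref{rem.basic}(a) — but that only gives sup-norm, not the locally-uniform-on-bounded-sets conclusion we want for the difference $u_n^i-w_n^i$, which is merely bounded, not small, globally. So instead I would keep the pointwise estimate: for $\|x\|\le r$, $\big|\big(I(h)u_n^i\big)(x)-\big(I(h)w_n^i\big)(x)\big|\le \sup_{\nu}\int_X|u_n^i-w_n^i|\big(\psi_h(x)+z\big)\,\nu(\d z)$, and restrict $\nu$ using Lemma~\ref{lem.Ib} so that $\nu$ is supported (in the $\Wc_p$ sense) near $\delta_0$; combined with $M_r$ from \eqref{eq.Mr} bounding $\|\psi_h(x)\|$ uniformly for $h\in[0,h_0]$, $\|x\|\le r$, and a Markov/Hölder argument to push the small $\Wc_p$-mass to a small set, this shows $\psi_h(x)+z$ stays in a fixed bounded set with all but a uniformly small amount of $\nu$-mass. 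Hence $\sup_{\|x\|\le r}|I(h)u_n^i(x)-I(h)w_n^i(x)|$ is bounded by $\sup_{\|\xi\|\le R}|u_n^i-w_n^i|(\xi)+(\text{small})\cdot 2C\to 0$ as $n\to\infty$, uniformly in $h\in[0,h_0]$.

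\textbf{The Lipschitz core.} For the middle term, $w_n^1,w_n^2\in\Lipb$ with $\|w_n^i\|_\Lip\le L_n$ — but here is a subtlety: the Lipschitz constants $L_n$ need not be uniformly bounded, so Lemma~\ref{lemma1}(b) with a single $a$ does not apply directly. I would handle this by choosing the Lipschitz approximants to have the \emph{same} growing Lipschitz constant, i.e.\ set $w_n^i=u_{n,k(n)}^i$ (the inf-convolution at level $k(n)$) with $k(n)\to\infty$ slowly enough that the bounded-set approximation is still good; then $\|w_n^1\|_\Lip,\|w_n^2\|_\Lip\le k(n)$, and Lemma~\ref{lemma1}(b) with $L=k(n)$ gives $a=a(k(n))$, so $I(h)w_n^1-I(h)w_n^2\le I_{a(k(n))}(h)(w_n^1-w_n^2)$ for $h\in[0,1]$. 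Now estimate $\big(I_a(h)(w_n^1-w_n^2)\big)(x)=\sup_{\Wc_p(\mu_h,\nu)\le ah}\int_X(w_n^1-w_n^2)(\psi_h(x)+z)\,\nu(\d z)$; the radius $ah$ still shrinks, and the integrand is bounded by $|w_n^1-w_n^2|$ which is small on bounded sets and bounded by $\|u_n^1-u_n^2\|_\infty+2$ globally. The same "most of the $\nu$-mass sits near $0$, and $\psi_h(x)$ stays in $\{\|\xi\|\le M_r\}$" argument — using Remark~\ref{rem.Wp}(d) to bound $\int\|z\|\,\nu(\d z)\le ah+\int\|y\|\,\mu_h(\d y)\to0$ and Markov's inequality — forces $\sup_{\|x\|\le r}\big(I_{a(k(n))}(h)(w_n^1-w_n^2)\big)(x)\le \sup_{\|\xi\|\le M_r+1}|w_n^1-w_n^2|(\xi)+\varepsilon_n$ with $\varepsilon_n\to0$. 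Symmetrically for $I(h)w_n^2-I(h)w_n^1$, yielding the absolute-value bound. Choosing $h_0=1$ and letting $n\to\infty$ gives \eqref{eq.convI}; uniform boundedness of $\sup_n\sup_{h}\|I(h)u_n^i\|_\infty$ follows from Remark~\ref{rem.basic}(a) and $\sup_n\|u_n^i\|_\infty<\infty$.

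\textbf{Main obstacle.} The delicate point is the non-uniformity of Lipschitz constants in the approximation step: one cannot freeze a single $a$ in Lemma~\ref{lemma1}(b). The resolution is that $a=a(L)$ enters only through the radius $a(L)h$ of the Wasserstein ball, and even with $L=L_n\to\infty$ one can keep $a(L_n)h$ — or at least the relevant tail mass — under control for $h$ in a \emph{fixed} interval $[0,h_0]$, provided $L_n$ grows slowly; equivalently, one trades off the rate $k(n)\to\infty$ of the inf-convolution against the rate at which $a(k(n))h_0$ would blow up, shrinking $h_0$ if necessary. Getting this bookkeeping right — ensuring a single $h_0>0$ works for all $n$ simultaneously — is the technical heart of the argument; everything else is the now-standard "small Wasserstein radius $+$ (A3) $+$ bounded trajectories via \eqref{eq.Mr} $+$ Markov's inequality" routine already rehearsed in Remark~\ref{rem.observations0}(a).
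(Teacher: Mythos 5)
You correctly identify the key structural step: Lemma~\ref{lem.Ib} restricts the supremum defining $I(h)$ to Wasserstein balls of radius $bh^\alpha$ shrinking to zero, after which (A2), (A3), Remark~\ref{rem.Wp}(d), the bound $M_r$ from \eqref{eq.Mr}, and Markov's inequality trap $\psi_h(x)+z$ in a fixed bounded set up to a uniformly small amount of $\nu$-mass. But that is essentially the whole of the paper's proof, applied \emph{directly} to the $\BUC$ difference $u_n:=u_n^1-u_n^2$: one writes $|I(h)u_n^1(x)-I(h)u_n^2(x)|\le\sup_{\Wc_p(\mu_h,\nu)\le bh^\alpha}\int_X|u_n(\psi_h(x)+z)|\,\nu(\d z)$, splits the integral at $\{\|z\|\le M\}$, and obtains the bound $\sup_{\|\xi\|\le M+M_r}|u_n(\xi)|+C(1+bh_0^\alpha)/M$ uniformly in $h\in[0,h_0]$ and $\|x\|\le r$; the first term tends to zero by hypothesis and the second is made small by choosing $M$ large. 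No Lipschitz regularity of $u_n$ is used anywhere; Lemma~\ref{lemma1} and the operators $I_a$ never appear in this proof. Your entire detour through Lipschitz approximants $w_n^1,w_n^2$ and the three-term decomposition is unnecessary.

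Moreover the detour contains a genuine gap. For the middle term you need $w_n^1-w_n^2\to0$ locally uniformly on bounded sets, but this does not follow from $u_n^1-u_n^2\to0$ on bounded sets, because inf-convolution is not linear. If $w_n^i(x)=\inf_y\big(u_n^i(y)+k\|K(x-y)\|\big)$ and $y^*$ is near-optimal for $w_n^2(x)$, then $w_n^1(x)-w_n^2(x)\le u_n^1(y^*)-u_n^2(y^*)+\varepsilon$, but near-optimality only constrains $\|K(x-y^*)\|$, which says nothing about $\|y^*\|$ since $K$ is compact and therefore not boundedly invertible; $y^*$ may escape every ball, so the locally uniform smallness of $u_n^1-u_n^2$ cannot be brought to bear. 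The outer terms have an analogous problem: making $w_n^i-u_n^i$ small on bounded sets \emph{uniformly in $n$} requires equicontinuity of $(u_n^i)_n$ on bounded sets, which the hypothesis does not supply. The "main obstacle" you identify (growing Lipschitz constants $L_n=k(n)$) is a symptom of an approach that never needed to be taken, and the actual obstruction --- that separate inf-convolutions do not transport locally uniform smallness of the difference --- is not resolved by slowing the rate $k(n)$. (One smaller slip: the hypothesis yields $\sup_n\|u_n^1-u_n^2\|_\infty<\infty$, not $\sup_n\max_i\|u_n^i\|_\infty<\infty$ as you assert; the paper's estimate needs only the former, since only the difference $u_n$ enters the tail bound with $C:=\sup_n\|u_n\|_\infty$.)
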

        
         \begin{proof}
          By Equation \eqref{eq.rembasic1}, we have $$\big\|I(t)u_n^1-I(t)u_n^2 \big\|_\infty\leq \|u_n^1-u_n^2\|_\infty\quad \text{for all }t\geq 0\text{ and }n\in \N.$$ Therefore, it remains to verify \eqref{eq.convI}. Let $u_n:=u_n^1-u_n^2$ and $\alpha:=\frac{p-1}{p}$. By Lemma~\ref{lem.Ib} with $C:= \sup_{n\in \N}\|u_n\|_\infty$, there exists some $b\geq 0$ such that
          \[
           \big|\big(I(t)u_n^1\big)(x)-\big(I(t)u_n^2\big)(x)\big|\leq \sup_{\Wc_p(\mu_t,\nu)\leq bt^\alpha} \int_X \big|u_n\big(\psi_t(x)+z\big)\big|\,\nu(\d z)
          \]
          for all $t\in [0,1]$ and $x\in X$. By our global Assumptions (A2) and (A3), there exists some $h_0\in (0,1]$ such that
          \[
           \sup_{h\in [0,h_0]}\|\psi_h(0)\|<\infty\quad \text{and}\quad \sup_{h\in [0,h_0]} \int_X \|y\|^p\, \mu_h(\d y)\leq 1.
          \]
          %Then, by Assumption (A1),
          %\[
          % M_r:=\sup_{h\in [0,h_0]}\sup_{\|x\|\leq r} \|\psi_h(x)\|\leq \sup_{h\in [0,h_0]}\|\psi_h(0)\|+r<\infty\quad \text{for all }r\geq 0.
          %\]
          Let $\ep>0$, $r\geq 0$, $M_r$ be given by Equation \eqref{eq.Mr}, and $M>0$ with $\frac{C(1+bh_0^\alpha)}{M}<\ep$.\ Then, using Markov's inequality and Remark \ref{rem.Wp} d),
          \begin{align*}
           \sup_{h\in [0,h_0]}\sup_{\|x\|\leq r} \sup_{\Wc_p(\mu_h,\nu)\leq bh^\alpha}& \int_X \big|u_n\big(\psi_h(x)+z\big)\big|\,\nu(\d z)\\
            &\leq \sup_{\|\xi\|\leq M+M_r}|u_n(\xi)|+\sup_{h\in [0,h_0]}\sup_{\Wc_p(\mu_h,\nu)\leq bh^\alpha}\frac{C}{M}\int_X\|z\|\,\nu(\d z)\\
           &\leq \sup_{\|\xi\|\leq M+M_r}|u_n(\xi)|+\frac{C\big(1+bh_0^\alpha\big)}{M}<\ep
          \end{align*}
          for $n\in \N$ sufficiently large.
         \end{proof}
         
              \begin{remark}\label{rem.centerpoint}
         	For $u\in \Lip$,
         	         	\begin{equation}\label{eq.difference}
         		\big\|I(h)u-T(h)u\big\|_\infty\leq h\phi^*\big(e^{c h}\|u\|_\Lip\big).
         	\end{equation}
         In fact, for all $x\in X$, by Remark \ref{rem.Wp} b),
         	\[
         	\big(I(h)u\big)(x)-\big(T(h)u\big)(x)\leq \sup_{\nu\in \Pc_p}\|u\|_\Lip \Wc_p(\mu_h,\nu)-\phi_h\big(\Wc_p(\mu_h,\nu)\big)=h\phi^*\big(e^{c h}\|u\|_\Lip\big).
         	\]
         \end{remark}
         
         \begin{lemma}\label{lem.keyestimates}
          Let $h_0>0$ as in Lemma~\ref{lem.technical}.\ Then, the operator $I(h)\colon \BUC \to \BUC$ is well-defined, convex, and monotone for all $h\in [0,h_0]$. Moreover, for all $u\in \BUC$,
          \[
           I(h)u\to u\quad \text{as }h\downarrow 0.
          \]
         \end{lemma}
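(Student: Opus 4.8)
The plan is to read all four assertions off the preliminary results (Remark~\ref{rem.basic} through Remark~\ref{rem.centerpoint}), disposing of the structural properties first and concentrating the work on the convergence $I(h)u\to u$. \emph{Monotonicity} and \emph{convexity} follow directly from the defining formula \eqref{eq.isometry}: if $u_1\le u_2$ pointwise, then for every $\nu\in\Pc_p(X)$ and $x\in X$ the expression inside the supremum defining $(I(h)u_1)(x)$ is dominated by the one defining $(I(h)u_2)(x)$, so $I(h)u_1\le I(h)u_2$; and for $\lambda\in[0,1]$ the expression inside the supremum for $\lambda u_1+(1-\lambda)u_2$ equals $\lambda$ times the one for $u_1$ plus $(1-\lambda)$ times the one for $u_2$ (writing the subtracted penalty as $\lambda\phi_h+(1-\lambda)\phi_h$), hence is bounded by $\lambda(I(h)u_1)(x)+(1-\lambda)(I(h)u_2)(x)$, and one takes the supremum over $\nu$. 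For \emph{well-definedness}, $I(h)$ maps $\Lipb$ into $\Lipb$ by Remark~\ref{rem.basic}~b) and $\|I(h)u\|_\infty\le\|u\|_\infty$ for every $u\in\BUC$ by Remark~\ref{rem.basic}~a); given $u\in\BUC$, I would pick $(u_n)_{n\in\N}\subset\Lipb$ with $u_n\to u$ and apply Lemma~\ref{lem.technical} with $u_n^1:=u_n$, $u_n^2:=u$ (so $u_n^1-u_n^2\to0$), obtaining $I(h)u_n\to I(h)u$ uniformly in $h\in[0,h_0]$; in particular $I(h)u_n\to I(h)u$ in the sense of \eqref{eq.pfeil} for each fixed $h\in[0,h_0]$, and since $I(h)u_n\in\Lipb$ this shows $I(h)u\in\BUC$. (For $h=0$ the only admissible $\nu$ is $\mu_0$, since $\phi_0=\infty\cdot\eins_{(0,\infty)}$; as $\psi_0=\mathrm{id}$, we get $I(0)=T(0)$, which maps $\BUC$ into $\BUC$ by Remark~\ref{rem.observations0}~c).)

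For the convergence $I(h)u\to u$ as $h\downarrow0$, I would first treat $u\in\Lipb$. By Remark~\ref{rem.centerpoint}, $\|I(h)u-T(h)u\|_\infty\le h\,\phi^*\!\big(e^{ch}\|u\|_\Lip\big)\to0$, since $\phi^*$ is finite and continuous; and estimating $T(h)u-u$ by means of $|u(a)-u(b)|\le\|u\|_\Lip\|K(a-b)\|$, $\|Ky\|\le\|y\|$, (A2), and H\"older's inequality, one gets
\[
\sup_{\|x\|\le r}\big|(T(h)u)(x)-u(x)\big|\le\|u\|_\Lip\bigg(\Big(\int_X\|y\|^p\,\mu_h(\d y)\Big)^{1/p}+\sup_{\|x\|\le r}\|\psi_h(Kx)-Kx\|\bigg)\longrightarrow0
\]
as $h\downarrow0$, by (A2) and (A3). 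For general $u\in\BUC$ I would again choose $(u_n)\subset\Lipb$ with $u_n\to u$ and split
\[
\big|(I(h)u)(x)-u(x)\big|\le\big|(I(h)u)(x)-(I(h)u_n)(x)\big|+\big|(I(h)u_n)(x)-u_n(x)\big|+\big|u_n(x)-u(x)\big|.
\]
Given $\varepsilon>0$ and $r\ge0$, Lemma~\ref{lem.technical} (with $u_n^1=u_n$, $u_n^2=u$) makes the first and, trivially, the third term less than $\varepsilon$ on $\{\|x\|\le r\}$ uniformly in $h\in[0,h_0]$ once $n$ is large, and for that fixed $n$ the middle term is less than $\varepsilon$ for $h$ small by the $\Lipb$ case; combined with $\sup_{h\in(0,h_0]}\|I(h)u\|_\infty\le\|u\|_\infty$, this is exactly $I(h)u\to u$.

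I expect the only genuine difficulty to be this last interchange of the limits $n\to\infty$ and $h\downarrow0$: a plain approximation argument does not close by itself, and it is precisely the uniformity in $h$ furnished by Lemma~\ref{lem.technical} that makes it work. Everything else is bookkeeping with the defining supremum and the estimates already established.
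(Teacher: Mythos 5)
Your proof is correct and follows essentially the same path as the paper: monotonicity and convexity read off from the defining supremum, well-definedness from $I(h)(\Lipb)\subset\Lipb$ plus Lemma~\ref{lem.technical}, the convergence for $u\in\Lipb$ from Remark~\ref{rem.centerpoint} combined with (A2)--(A3), and the general case by approximation using the uniformity in $h$ supplied by Lemma~\ref{lem.technical}. The only differences are cosmetic (you unfold $|u(\psi_h(x))-u(x)|\le\|u\|_\Lip\|\psi_h(Kx)-Kx\|$ explicitly and add a remark on the $h=0$ case), so no further comparison is needed.
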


     \begin{proof}
       By Remark \ref{rem.basic}, $I(t)u\in \Lipb$ for all $t\geq 0$ and $u\in \Lipb$. By Lemma~\ref{lem.technical}, it follows that $I(h)\colon \BUC\to \BUC$ is well-defined for all $h\in [0,h_0]$, since, by definition of $\BUC$, every $u\in \BUC$ can be approximated by a sequence $(u_n)_{n\in \N}\subset \Lipb$ in the sense that $u_n\to u$ as $n\to \infty$.\ The convexity and monotonicity of the operator $I(h)$, for $h\in [0,h_0]$, are an immediate consequence of its definition. Let $u\in \Lipb$ and $r\geq 0$. %Then, by Remark \ref{rem.centerpoint}, for all $x\in X$,...
		%\[
		%\big(I(h)u\big)(x)-\big(T(h)u\big)(x)\leq \sup_{\nu\in \Pc_p}\|u\|_\Lip \Wc_p(\mu_h,\nu)-\phi_h\big(\Wc_p(\mu_h,\nu)\big)=he^{c h}\phi^*\big(\|u\|_\Lip\big),
		%\]
		%which, by taking the supremum over all $x\in X$, leads to the inequality
		%\begin{equation}\label{eq.difference}
		%	\big\|I(h)u-T(h)u\big\|_\infty\leq he^{c h}\phi^*\big(\|u\|_\Lip\big).
		%\end{equation}
		By Remark \ref{rem.observations0} a),
		\[
		\sup_{\|x\|\leq r}\big|u\big(\psi_h(x)\big)-u(x)\big|\to 0\quad \text{as }h\downarrow 0.
		\]
		By virtue of Assumption (A3), we may conclude that
		\begin{align*}
		\sup_{\|x\|\leq r}\big|\big(T(h)u\big)(x)-u(x)\big|&\leq \|u\|_\Lip\bigg(\int_X \|y\|^p\, \mu_h(\d y)\bigg)^{1/p}\\
		&\quad+\sup_{\|x\|\leq r}\big|u\big(\psi_h(x)\big)-u(x)\big|\to 0\quad \text{as }h\downarrow 0,
		\end{align*}
		which, together with \eqref{eq.difference}, implies that $I(h)u\to u$ as $h\downarrow 0$. For general $u\in\BUC$, the strong continuity 
		follows by approximating with $(u_n)_{n\in \N}\subset \Lipb$ and using the convergence $I(h)u_n\to I(h)u$, uniformly in $h\in [0,h_0]$, as $n\to \infty$, see Lemma~\ref{lem.technical}.
     \end{proof}

    Alternatively, the family of operators $I=\big(I(t)\big)_{t\geq 0}$ can also be defined via couplings with first marginals given in terms of the family of laws $(\mu_t)_{t\geq 0}$, as we point out in the following remark.
	
	\begin{remark}\label{rem.Icoupling}
		Let
		\begin{equation}\label{eq.deltap}
			\Delta_p\pi:=\bigg(\int_{X\times X} \|y-z\|^p\, \pi(\d y,\d z)\bigg)^{1/p}\quad \text{for all }\pi\in \Pc_p(X\times X).
		\end{equation}
		Then, by definition, $\Wc_p(\mu,\nu)=\inf_{\pi\in \Cpl(\mu,\nu)} \Delta_p\pi$ for all $\mu,\nu\in \Pc_p(X)$.\ Moreover, by Remark \ref{rem.Wp} b), $$| \Delta_p\pi-  \Delta_p\pi^\prime |\le \Wc_p(\pi,\pi^\prime)\quad\text{for all }\pi'\in \Pc_p(X\times X),$$ i.e., the map $\Pc_p(X\times X)\to \R,\; \pi\mapsto \Delta_p\pi$ is $1$-Lipschitz.\ For $\pi\in \Pc_p(X\times X)$ and $i=1,2$, we denote by $\pi_i$ the $i$-th marginal of $\pi$. Using these notations, we find that, for all $t\geq0$, $u\in \BUC$, and $x\in X$,
		\begin{equation}\label{eq.Icoupling}
			\big(I(t)u\big)(x)=\sup_{\substack{\pi\in \Pc_p(X\times X)\\ \pi_1=\mu_t}}\bigg(\int_X u\big(\psi_t(x)+z\big)\,\pi_2(\d z)-\phi_t\big(\Delta_p\pi\big)\bigg).
		\end{equation}
		In fact, since $\phi$ is nondecreasing, $\geq $ holds in \eqref{eq.Icoupling}.\ In order to establish the other inequality, let $\nu\in \Pc_p(X)$.\ By \cite[Theorem 4.1]{villani2008optimal}, there exists an optimal coupling $\pi^*\in \Cpl(\mu,\nu)$ with $\Wc_p(\mu_t,\nu)=\Delta_p\pi^*$.\ By definition of $\Cpl(\mu_t,\nu)$, it follows that $\pi^*_1=\mu_t$ and $\pi^*_2=\nu$. Therefore,
		\begin{align*}
			\int_X u\big(\psi_t(x)+z\big)\,\nu(\d z)-\phi_t\big(\Wc_p&(\mu_t,\nu)\big) =\int_X u\big(\psi_t(x)+z\big)\,\pi^*_2(\d z)-\phi_t\big(\Delta_p\pi^*\big)\\
			&\leq \sup_{\substack{\pi\in \Pc_p(X\times X)\\ \pi_1=\mu_t}}\bigg(\int_X u\big(\psi_t(x)+z\big)\,\pi_2(\d z)-\phi_t\big(\Delta_p\pi\big)\bigg).
		\end{align*}
		Taking the supremum over all $\nu\in \Pc_p(X)$ yields the desired claim.
	\end{remark}
	
	The following measurable selection result forms the basis for several proofs in this section.
	
	\begin{lemma}\label{lem.measselection}
		Let $t\geq 0$, $u\in \BUC$, and $\ep>0$. Then, there exists a measurable map $\pi\colon X\to \Pc_p(X\times X),\; x\mapsto \pi^x$ such that, for each $x\in X$, the first marginal $\pi^x_1$ of $\pi^x$ equals $\mu_t$ and
		\[
		\big(I(t)u\big)(x)\leq \int_X u\big(\psi_t(x)+z\big)\,\pi_2^x(\d z)-\phi_t\big(\Delta_p\pi^x\big)+\ep,
		\]
		where  $\pi^x_2\in \Pc_p(X)$ denotes the second marginal of $\pi^x$.
	\end{lemma}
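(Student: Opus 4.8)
The plan is to recast the supremum defining $I(t)u$ in terms of couplings, as in Remark~\ref{rem.Icoupling}, and then to apply a measurable selection theorem to the resulting parametrised optimisation problem over a Polish space of couplings. For $t=0$ the assertion is immediate: since $\phi_0=\varphi_0=\infty\cdot\eins_{(0,\infty)}$ only admits couplings with $\Delta_p\pi=0$, i.e.\ push-forwards of $\mu_0$ under the diagonal map $y\mapsto(y,y)$, and $\psi_0=\mathrm{id}$, the supremum is attained by the coupling $\pi^x:=(y\mapsto(y,y))_\#\mu_0$, which does not depend on $x$ and hence defines a (trivially measurable) map. So assume $t>0$ in what follows.

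Put $\Gamma:=\{\pi\in\Pc_p(X\times X)\colon \pi_1=\mu_t\}$. Since $X$ is a separable Banach space, $(\Pc_p(X\times X),\Wc_p)$ is a Polish space (see, e.g., \cite{villani2008optimal}), and, because the marginal maps $\pi\mapsto\pi_i$ are $1$-Lipschitz for $\Wc_p$ (push a coupling of $\pi$ and $\pi'$ forward to the $i$-th coordinates and use that $\|(y,z)-(y',z')\|=\|y-y'\|+\|z-z'\|$), $\Gamma$ is closed in $\Pc_p(X\times X)$, hence itself Polish. Define
\[
F(x,\pi):=\int_X u\big(\psi_t(x)+z\big)\,\pi_2(\d z)-\phi_t\big(\Delta_p\pi\big),\qquad (x,\pi)\in X\times\Gamma,
\]
so that \eqref{eq.Icoupling} reads $\big(I(t)u\big)(x)=\sup_{\pi\in\Gamma}F(x,\pi)$. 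The first summand is continuous in $\pi$ (as $\pi\mapsto\pi_2$ is $1$-Lipschitz for $\Wc_p$, $\Wc_p$-convergence implies weak convergence, and $z\mapsto u(\psi_t(x)+z)$ is bounded and continuous because $u\in\BUC$ and $\psi_t$ is continuous) and continuous in $x$ for fixed $\pi$ (dominated convergence, using continuity of $\psi_t$ together with boundedness and continuity of $u$); being a Carath\'eodory function on a product of separable metric spaces, it is jointly Borel. The second summand depends only on $\pi$; it is the composition of the continuous ($1$-Lipschitz) map $\pi\mapsto\Delta_p\pi$ with the lower semicontinuous convex function $\phi_t$, hence lower semicontinuous, and in fact it is continuous on the closed set $D_t:=\{\pi\in\Gamma\colon\phi_t(\Delta_p\pi)<\infty\}$ since $\phi$ (and therefore $\phi_t$) is continuous on its domain. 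Consequently $F$ is jointly Borel, $F(x,\cdot)$ is continuous on $D_t$ and equals $-\infty$ on $\Gamma\setminus D_t$, and $\big(I(t)u\big)(x)=\sup_{\pi\in D_t}F(x,\pi)$. As $D_t$ is a nonempty Polish space (it contains the diagonal coupling of $\mu_t$), picking a countable dense set $\{\rho_k\}_{k\in\N}\subset D_t$ and using continuity of $F(x,\cdot)$ on $D_t$ yields $\big(I(t)u\big)(x)=\sup_{k\in\N}F(x,\rho_k)$; in particular $x\mapsto\big(I(t)u\big)(x)$ is Borel measurable.

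Now the set $G:=\big\{(x,\pi)\in X\times D_t\colon F(x,\pi)>\big(I(t)u\big)(x)-\ep\big\}$ is Borel, and its projection onto $X$ is all of $X$, since $\big(I(t)u\big)(x)=\sup_{\pi\in D_t}F(x,\pi)$ guarantees that for every $x$ there is some $\pi\in D_t$ with $F(x,\pi)>\big(I(t)u\big)(x)-\ep$. By the Jankov--von Neumann uniformisation theorem there is a universally measurable map $X\to D_t\subset\Pc_p(X\times X),\ x\mapsto\pi^x$, with $(x,\pi^x)\in G$ for all $x\in X$; that is, $\pi^x_1=\mu_t$ and
\[
\big(I(t)u\big)(x)\le\int_X u\big(\psi_t(x)+z\big)\,\pi^x_2(\d z)-\phi_t\big(\Delta_p\pi^x\big)+\ep\qquad\text{for all }x\in X,
\]
where $\pi^x_2$ denotes the second marginal of $\pi^x$. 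This is the claim. (Alternatively, since $F(x,\cdot)$ is continuous on the Polish space $D_t$, one may apply the Kuratowski--Ryll-Nardzewski theorem to the closed-valued, nonempty-valued correspondence $x\mapsto\{\pi\in D_t\colon F(x,\pi)\ge\big(I(t)u\big)(x)-\ep\}$ to obtain a Borel selection.)

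The main obstacle is of a purely measure-theoretic nature: one must establish the joint Borel-measurability of $F$ and the measurability of $x\mapsto\big(I(t)u\big)(x)$, which forces one to pin down the (semi)continuity of $\pi\mapsto F(x,\pi)$ and to observe that it is in fact continuous on the sublevel set $D_t$, so that the value function is a countable supremum of Borel functions. The non-compactness of $\Gamma$ -- balls in $\Pc_p$ of an infinite-dimensional space are not relatively compact -- is what makes the detour through a selection/uniformisation theorem (rather than a direct extraction of a maximiser) unavoidable.
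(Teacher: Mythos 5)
Your proof is correct and follows the same overall strategy as the paper: reformulate the supremum defining $I(t)u$ in terms of couplings via Remark~\ref{rem.Icoupling}, restrict to the set $D_t$ of feasible couplings where the penalisation is finite and the objective is continuous, and invoke a measurable selection theorem. The paper does this in three lines, appealing directly to \cite[Proposition 7.34]{bertsekas2004stochastic} (a selection result for upper semianalytic value functions on Borel spaces), which silently subsumes the measurability of $x\mapsto (I(t)u)(x)$ and yields a universally measurable $\ep$-optimal selection. You instead argue explicitly that $D_t$ is closed in $\Pc_p(X\times X)$ (because $\dom(\phi_t)$ is closed and $\pi\mapsto\Delta_p\pi$ is $1$-Lipschitz), hence Polish; that $F$ is a Carath\'eodory function on $X\times D_t$, hence jointly Borel; that the value function is a countable supremum of Borel functions; and then apply Jankov--von Neumann (giving universal measurability, matching the paper's output) or alternatively Kuratowski--Ryll-Nardzewski (giving a Borel selection). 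These are essentially interchangeable routes to the same conclusion; your version simply makes explicit what the citation to Bertsekas--Shreve compresses, and the observation that $D_t$ is actually closed (not merely $F_\sigma$) is a useful clarification that justifies Polishness cleanly.
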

	
	\begin{proof}
		Let $D_t:=\{\pi\in \Pc_p(X\times X)\colon \pi_1=\mu_t\text{ and }\phi_t(\Delta_p\pi)<\infty\}$. Then, by Remark \ref{rem.Icoupling}, the map
		\[
		f\colon X\times D_t\to \R,\quad (x,\pi)\mapsto  \int_{X\times X} u\big(\psi_t(x)+z\big)\,\pi_1 (\d z)-\phi_t\big(\Delta_p\pi\big)
		\]
		is continuous, and 
		\[
		\big(I(t)u\big)(x)=\sup_{\pi\in D_t}f(x,\pi)\quad \text{for all }x\in X.
		\]
		The statement now follows from \cite[Proposition 7.34]{bertsekas2004stochastic}.
	\end{proof}
	
	\begin{lemma}\label{lem.monotonicity}
		Let $ s,t\geq 0$ and $u\in \BUC$. Then,
		\[ \big(I(t) I(s) u\big)(x) \leq \big(I(t+s) u\big)(x)\quad \text{for all }x\in X. \]
	\end{lemma}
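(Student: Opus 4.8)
The plan is to prove the subadditivity inequality $I(t)I(s)u \le I(t+s)u$ by a gluing/concatenation argument at the level of couplings. The operator $I(t)$ is, by Remark~\ref{rem.Icoupling}, a supremum over couplings $\pi$ with first marginal $\mu_t$, penalised by $\phi_t(\Delta_p\pi)$. The idea is that, given near-optimal couplings for the inner problem $I(s)$ at the relevant base point and for the outer problem $I(t)$, one can concatenate them into a single admissible coupling for $I(t+s)$, using the Chapman--Kolmogorov relation \eqref{eq.consistency} to handle the deterministic shifts $\psi_t,\psi_s$ and a convexity estimate to control the combined penalty term.

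First I would fix $x\in X$, $\varepsilon>0$, and apply the measurable selection Lemma~\ref{lem.measselection} to $I(s)u$: there is a measurable map $y\mapsto \pi^y$ with $\pi^y_1=\mu_s$ and
\[
\big(I(s)u\big)(y)\le \int_X u\big(\psi_s(y)+z\big)\,\pi^y_2(\d z)-\phi_s\big(\Delta_p\pi^y\big)+\varepsilon.
\]
Next I would pick a near-optimiser $\rho\in\Pc_p(X\times X)$ with $\rho_1=\mu_t$ for the outer problem $\big(I(t)(I(s)u)\big)(x)$. Applying $\rho$ at base point $\psi_t(x)$, the first marginal $\mu_t$ plays the role of the increment $Y_t$, so that for each sampled increment $y$ the relevant inner base point is $\psi_t(x)+y$; one then substitutes the above bound with $y$ replaced by $\psi_t(x)+y$. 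Concatenating $\rho$ on the first two coordinates with the family $\pi^{\psi_t(x)+y}$ on the next two yields a measure on $X^3$; pushing forward through $(y,z)\mapsto \psi_s(\psi_t(x)+y)+z$ and recalling, via \eqref{eq.consistency}, that the law of $\psi_s(\psi_t(x)+Y_t)+Y_s$ matches the structure governed by $\mu_{t+s}$, one obtains a coupling $\sigma$ with $\sigma_1=\mu_{t+s}$ feeding into the $I(t+s)$ supremum.

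The penalty term is where the convexity built into the definition $\phi_t(v)=t\phi(e^{-ct}v/t)$ is used. One has a combined transport cost that, after the shift $\psi_t$ (which is $e^{ct}$-Lipschitz by (A1)), is bounded by something like $e^{ct}\Delta_p\rho$ on the first leg plus $\Delta_p\pi^{\cdot}$ on the second; the triangle inequality in $L^p$ gives $\Delta_p\sigma \le$ (a controlled combination). Then the key is the inequality
\[
t\,\phi\!\Big(e^{-ct}\tfrac{v}{t}\Big)+s\,\phi\!\Big(e^{-cs}\tfrac{w}{s}\Big)\;\ge\;(t+s)\,\phi\!\Big(e^{-c(t+s)}\tfrac{v'+w}{t+s}\Big)
\]
for the appropriate combination, which follows from convexity of $\phi$ (Jensen with weights $t/(t+s)$ and $s/(t+s)$) together with the monotonicity of $\phi$ and the elementary bound $e^{-c(t+s)}(v'+w) \le$ (weighted average of $e^{-ct}v$-type terms); here $v'$ absorbs the factor $e^{ct}$ picked up from the Lipschitz shift, which is exactly compensated by the $e^{-c(t+s)}$ versus $e^{-ct}$ discrepancy. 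Carrying this through gives $\big(I(t)I(s)u\big)(x)\le \big(I(t+s)u\big)(x)+\varepsilon$, and letting $\varepsilon\downarrow 0$ finishes the proof. The boundary cases $t=0$ or $s=0$ are handled separately using $\varphi_0=\infty\cdot\eins_{(0,\infty)}$ and $\psi_0=\mathrm{id}$, which force the degenerate coupling and make the inequality trivial.

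**The main obstacle** I expect is the bookkeeping in the concatenation step: one must be careful that the measurable selection applied at the random point $\psi_t(x)+y$ integrates properly against $\mu_t$, that the resulting three-coordinate measure genuinely has first marginal $\mu_{t+s}$ (this is precisely where \eqref{eq.consistency} enters and must be invoked in the right form), and that the transport-cost estimate combining the two legs is compatible with the exponential weights in $\phi_t$ and $\phi_s$ so that the convexity inequality above can be applied cleanly. The Lipschitz constant $e^{ct}$ from (A1) and the damping $e^{-ct}$ inside $\phi_t$ are designed to match, but verifying this matching is the delicate computational heart of the argument.
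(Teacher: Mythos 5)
Your strategy coincides with the paper's: pick a near-optimal coupling $\pi_t$ for the outer step, apply the measurable-selection lemma to $I(s)u$ to get a family $z\mapsto\pi_s^z$ for the inner step, glue them, identify the first marginal of the glued coupling as $\mu_{t+s}$ via \eqref{eq.consistency}, split the transport cost with Minkowski, and close the penalty bound by combining Jensen (for $v\mapsto\phi_s(v^{1/p})$) with the convexity and monotonicity of $\phi$. This is exactly the structure of the paper's proof, so the overall approach is right.

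Two details should be corrected before the argument closes. First, the base point at which $I(s)u$ is evaluated is $\psi_t(x)+z_t$ with $z_t$ drawn from the \emph{perturbed} second marginal $\nu_t=\pi_{t,2}$ of the outer coupling, not from the reference marginal $\mu_t$; correspondingly the inner selection is indexed by $z_t$ (the paper's $\pi_s^{z_t}$). Second, the Lipschitz factor accrued on the $t$-leg of the transport cost comes from applying $\psi_s$ to $\psi_t(x)+y_t$ versus $\psi_t(x)+z_t$, so it is $e^{cs}$, not $e^{ct}$. The damping scheme is designed precisely so that $e^{-c(t+s)}\cdot e^{cs}=e^{-ct}$, reproducing the argument of $\phi$ in $\phi_t$; the $s$-leg then uses $e^{-c(t+s)}\le e^{-cs}$ together with monotonicity of $\phi$ before invoking Jensen with weights $t/(t+s)$, $s/(t+s)$. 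With your factor $e^{ct}$ one would get $e^{-c(t+s)}e^{ct}=e^{-cs}$ on the $t$-leg, which is not dominated by the required $e^{-ct}$ unless $s\ge t$, so the Jensen step would not close. Finally, note that the glued measure naturally lives on $X^2\times X^2$ (not $X^3$), and one must push forward \emph{both} coordinate pairs through $\psi_s(\psi_t(x)+\cdot)+\cdot-\psi_{t+s}(x)$ to obtain a genuine coupling $\pi_{t+s}\in\Pc_p(X\times X)$ whose transport cost $\Delta_p\pi_{t+s}$ can then be estimated.
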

	
	\begin{proof}
		Let $\ep>0$, $ x \in X $, and $ \pi_t\in \Pc_p(X\times X) $ with first marginal $\pi_{t,1}=\mu_t$ and
		\[
		\big(I(t) I(s) u\big)(x) \leq  \int_{X} \big(I(s) u\big)\big(\psi_{t}(x)+z_t\big)\nu_t(\d z_t) - \phi_{t}\big(\Delta_p \pi_t\big)+\frac{\ep}{2},
		\]
		where $\nu_t:=\pi_{t,2}\in \Pc_p(X)$ is the second marginal of $\pi_t$.
		By Lemma~\ref{lem.measselection}, there exists a measurable map $\pi_s\colon X\to \Pc_p(X\times X),\; z\mapsto \pi_s^z$ such that, for all $z\in X$, the first marginal $\pi_{s,1}^z$ of $\pi_s^ z$ is $\mu_s$ and 
		\begin{align*}
			\big(I(s) u\big)\big(\psi_{t}(x)+z\big) \leq \int_{X} u\Big(\psi_{s}\big(\psi_{t}(x)+ z\big)+z_s\Big) \nu^{z}_{s}(\d z_s)- \phi_{s}\big(\Delta_p \pi_s^z\big)+\frac{\ep}{2}
		\end{align*}
		with $\nu_s^z:=\pi_{s,2}^z\in \Pc_p(X)$ being the second marginal of $\pi_s^z$.
		It follows that
		\begin{align}
			\notag\big(I(t) I(s) u\big)(x)\leq \int_{X} & \int_{X} u\Big(\psi_{s}\big(\psi_{t}(x)+z_t\big)+z_s\Big)\, \nu_{s}^{z_t}(\d z_s)\, \nu_t(\d z_t)\\
			\label{eq.consist1}&- \int_{X} \phi_{s}\big(\Delta_p\pi^{z_t}_s\big)\, \nu_t(\d z_t)- \phi_{t}\big(\Delta_p \pi_t\big)+\ep,
		\end{align}
		where the measurability of the map $z\mapsto \phi_{s}\big( \Delta_p\pi_{s}^{z}\big)$ follows from the measurability of the map $z\mapsto \pi^ z$ and the lower semicontinuity of the map $\pi\mapsto \phi_s\big(\Delta_p\pi\big)$. We now define a new coupling $ \pi_{t+s}$ via
		\begin{align}
			\notag \int_{X\times X} g(y_{t+s},z_{t+s})\, \pi_{t+s}(\d &y_{t+s},\d z_{t+s})=\int_{X\times X} \int_{X\times X} g\Big(\psi_{s}\big(\psi_{t}(x)+y_t\big)+y_s-\psi_{t+s}(x),\\
			\label{eq.coupling} &\psi_{s}\big(\psi_{t}(x)+z_t\big)+z_s-\psi_{t+s}(x)\Big) \,\pi_{s}^{z_t}(\d y_s, \d z_s)\,\pi_{t}(\d y_t, \d  z_t)
		\end{align}
		for all bounded continuous functions $g\colon X\times X\to \R$. Note that the outer integral on the right-hand side of \eqref{eq.coupling} is well-defined since the inner integral depending on $y_t$ and $z_t$ is jointly measurable in $(y_t,z_t)$. The latter follows from the measurability of the map $z\mapsto \pi^ z$, the choice of $g$ as a bounded continuous function, and the (Lipschitz) continuity of $\psi_s$.  
		Then, the consistency condition (\ref{eq.consistency}) and the definition of $\pi_{t+s}$ in \eqref{eq.coupling} with $g(y,z)= u(\psi_{t+s}(x)+y)$, for $y,z\in X$, imply that the first marginal of $\pi_{t+s}$ equals to $\mu_{t+s}$, i.e., $\pi_{1,t+s}=\mu_{t+s}$. In particular, for $g(y,z)= u(\psi_{t+s}(x)+z)$ with $y,z\in X$ in \eqref{eq.coupling}, we have
		\begin{align*}
			\big(I(t+s)u\big)(x) &\geq  \int_{X}  u\big(\psi_{t+s}(x)+z_{t+s}\big)\,\nu_{t+s}(\d z_{t+s}) - \phi_{t+s}\big(\Delta_p\pi_{t+s}\big)\\
			&=\int_{X} \int_{X} u\Big(\psi_{s}\big(\psi_{t}(x)+z_t\big)+z_s\Big) \,\nu_{s}^{z_t}(\d z_s)\,\nu_t(\d z_t)-\phi_{t+s}\big(\Delta_p\pi_{t+s}\big),
		\end{align*}
		where $\nu_{t+s}:=\pi_{2,t+s}$ denotes the second marginal of $\pi_{t+s}$. Note that $\pi_{t+s}$ is an element of $\Pc_p(X\times X)$. This follows from $\pi_{1,t+s}=\mu_{t+s}$ together with the estimate 
		\begin{align}
			\label{eq.keyest} \phi_{t+s}\big(\Delta_p\pi_{t+s}\big)\leq  \phi_{t}&\big(\Delta_p\pi_t\big)+\int_{X} \phi_{s}\big(\Delta_p\pi_s^{z_t}\big)\,\nu_t(\d z_t).
		\end{align}
		It remains to prove \eqref{eq.keyest}, since, in view of \eqref{eq.consist1}, the estimate \eqref{eq.keyest} implies $I(t+s)u\geq I(t)I(s)u$ after taking the limit $\ep\downarrow 0$. We start by showing that  
		\begin{align*} \Delta_p\pi_{t+s}
			\leq \Delta_p\pi_{t} + \bigg(\int_{X}\int_{X\times X} \|y_s-z_s\|^p\, \pi_s^{z_t}(\d y_s,\d z_s)\,\nu_t(\d z_t)\bigg)^{1/p}. \end{align*}
		Using \eqref{eq.coupling}, Minkowski's inequality, and Assumption (A1),
		\begin{align*}
			\Delta_{p}\pi_{t+s}&= \bigg(\int_{X\times X}\|y_{t+s}-z_{t+s}\|^p\, \pi_{t+s}(\d y_{t+s},\d z_{t+s})\bigg)^{1/p}\\
			&=\left(\int_{X\times X} \big\|\psi_{t+s}(x)+y_{t+s}-\big(\psi_{t+s}(x)+z_{t+s}\big)\big\|^{p}\,\pi_{t+s}(\d y_{t+s},\d z_{t+s})\right)^{1/p}\\
			& \leq   \bigg(\int_{X\times X}\int_{X\times X} \big\|\psi_{s}\big(\psi_{t}(x)+y_t\big)-\psi_{s}\big(\psi_{t}(x)+z_t\big)\big\|^{p}\,\pi_{t}(\d y_t,\d z_t)\bigg)^{1/p} \\
			&  \qquad+ \bigg(\int_{X}\int_{X\times X}\|y_s-z_s\|^{p}  \,\pi_{s}^{z_t}(\d y_s,\d z_s)\,\nu_{t}(\d z_t)\bigg)^{1/p}\\
			&\leq e^{c s}\Delta_p\pi_t+ \bigg(\int_{X}\int_{X\times X}\|y_s-z_s\|^{p}\,  \pi_{s}^{z_t}(\d y_s,\d z_s)\,\nu_{t}(\d z_t)\bigg)^{1/p}.
		\end{align*}
		Note that the function $z\mapsto \int_{X\times X}\|y_s-z_s\|^{p} \, \pi_{s}^{z}(\d y_s,\d z_s)$ is in fact measurable, since the map $z\mapsto \pi_s^z$ is measurable and the map $\pi\mapsto \int_{X\times X}\|y-z\|^p\, \pi(\d y,\d z)$ is continuous, by Remark \ref{rem.Wp} b), and thus measurable. Finally, we show that \eqref{eq.keyest} holds.
%		\begin{align*}
%			\phi_{t+s}\big( \Delta_p\pi_{t+s} \big)\leq  \phi_{t}\big(\Delta_p\pi_t\big) + \int_{X}\phi_{s}\big( \Delta_p\pi_{s}^{z_t}\big)\, \nu_{t}(\d z_t).
%		\end{align*}
		Since we assumed $ v \mapsto \phi\big(v^{1/p}\big) $ to be convex, the map $ v \mapsto \phi_{s}\big(v^{1/p}\big) $ is convex as well, which, by Jensen's inequality, implies that
		\[
		\phi_s\Bigg(\bigg(\int_{X} \int_{X\times X} \|y_s-z_s\|^p\, \pi_s^{z_t}(\d y_s,\d z_s)\,\nu_t(\d z_t)\bigg)^{1/p}\Bigg)\leq \int_{X}\phi_{s}\big(\Delta_{p}\pi_s^{z_t}\big)\,\nu_{t}(\d z_t).
		\]
		The convexity of $ \phi $ yields that
		\begin{align*}
			\phi_{t+s}\big(\Delta_p\pi_{t+s}\big)&=(t+s) \phi\bigg(e^{-c (t+s)}\frac{\Delta_p\pi_{t+s}}{t+s}\bigg)\leq  \phi_{t}\big(\Delta_p\pi_t\big) + \int_{X}\phi_{s}\big( \Delta_{p}\pi_s^{z_t} \big)\,\nu_{t}(\d z_t).
		\end{align*}
		The proof is complete.
	\end{proof}

	By $\BUC^1$, we denote the space of all $u\in \Lipb$ with Fr\'echet derivative $u'\colon X\to X'$ satisfying $x\mapsto \|u'(x)\|\in \BUC$. Here and throughout, the dual space $X'$ of $X$ is endowed with the operator norm 
	\[
	\|x'\|:=\sup_{\|x\|\leq 1} |x'x|\quad \text{for all }x'\in X'.
	\]
	
	\begin{lemma}\label{lem.keygenerator}
		Let $u\in \BUC^1$. Then,
		\begin{equation}\label{eq.difference1}
		\frac{I(h)u-T(h)u}{h}\to \phi^*\big(\|u'(\,\cdot\,)\|\big) \quad \text{as }h\downarrow 0.
		\end{equation}
		In particular, the map $X\to \R,\; x\mapsto \phi^*\big(\|u'(x)\|\big)$ is an element of $\BUC$.
	\end{lemma}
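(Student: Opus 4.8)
The plan is to compute the limit \eqref{eq.difference1} pointwise in $x$ and then upgrade to the convergence in $\BUC$. By Remark~\ref{rem.centerpoint} (more precisely the one-sided estimate in its proof) we already know
\[
0\le \big(I(h)u\big)(x)-\big(T(h)u\big)(x)\le h\,\phi^*\big(e^{c h}\|u\|_\Lip\big),
\]
so the quotient $\tfrac{I(h)u-T(h)u}{h}$ is nonnegative and bounded, uniformly in $x$. The first step is to obtain a matching \emph{lower} bound on $\big(I(h)u\big)(x)-\big(T(h)u\big)(x)$ that captures the local slope $\|u'(x)\|$ rather than the global constant $\|u\|_\Lip$. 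To this end, fix $x$, abbreviate $\xi:=\psi_h(x)$, and test the supremum in \eqref{eq.isometry} against a translate of $\mu_h$ in the direction of near-maximal increase of $u$ at $\xi$: choose $x'_\xi\in X'$ with $\|x'_\xi\|=1$ and $x'_\xi\big(u'(\xi)\big)\ge\|u'(\xi)\|-\ep$ (Hahn–Banach), pick $e_\xi\in X$ with $\|e_\xi\|\le 1$ and $x'_\xi(e_\xi)\ge 1-\ep$, and take $\nu$ to be the pushforward of $\mu_h$ under $z\mapsto z+r e_\xi$ for a scalar $r\ge 0$ to be optimized. Then $\Wc_p(\mu_h,\nu)\le r$, so
\[
\big(I(h)u\big)(x)\ge \int_X u(\xi+z+r e_\xi)\,\mu_h(\d z)-\phi_h(r),
\]
and a first-order Taylor expansion of $u$ (using that $u'$ is the Fréchet derivative together with the uniform continuity of $x\mapsto\|u'(x)\|$ on bounded sets) gives
\[
\int_X u(\xi+z+r e_\xi)\,\mu_h(\d z)-\big(T(h)u\big)(x)\ge r\,x'_\xi\big(u'(\xi)\big)(1-\ep)-o(r)-\|u\|_\Lip\!\!\int_X\!\!\|z\|\,\mathbf 1_{\{\|z\|>\rho\}}\mu_h(\d z),
\]
the error terms being controlled by Assumption~(A3) ($\int\|z\|^p\mu_h(\d z)\to 0$) and the local modulus of continuity of $u'$ on the ball of radius $M_r$ from \eqref{eq.Mr}. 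Dividing by $h$, recalling $\phi_h(r)=h\,\phi(e^{-ch}r/h)$, writing $r=h s$, and letting $h\downarrow0$ (so $\xi=\psi_h(x)\to x$ by (A2) and $u'(\xi)\to u'(x)$ in norm), the lower bound becomes $\sup_{s\ge0}\big(s\,\|u'(x)\|-\phi(s)\big)=\phi^*(\|u'(x)\|)$ after letting $\ep\downarrow0$.

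The second step is the reverse inequality $\limsup_{h\downarrow0}\tfrac{(I(h)u-T(h)u)(x)}{h}\le\phi^*(\|u'(x)\|)$. Here I use Lemma~\ref{lem.measselection} (or directly Remark~\ref{rem.Icoupling}) to pick, for each small $h$, a near-optimal coupling $\pi^x$ with $\pi^x_1=\mu_h$ and
\[
\big(I(h)u\big)(x)\le \int_X u(\psi_h(x)+z)\,\pi^x_2(\d z)-\phi_h(\Delta_p\pi^x)+h^2.
\]
Subtracting $\big(T(h)u\big)(x)=\int u(\psi_h(x)+y)\,\mu_h(\d y)$ and using that $u$ has Fréchet derivative $u'$ with the remainder controlled by the local modulus of $u'$ near $\psi_h(x)$, I estimate
\[
\int_X\!\!\int_X\!\big(u(\psi_h(x)+z)-u(\psi_h(x)+y)\big)\pi^x(\d y,\d z)\le \|u'(\psi_h(x))\|\,\Delta_p\pi^x+\omega(\Delta_p\pi^x)\,\Delta_p\pi^x,
\]
where $\omega$ is that local modulus and I have used Jensen/Hölder to pass from $\int\|z-y\|\,\pi^x$ to $\Delta_p\pi^x$. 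Combining, dividing by $h$, and setting $s:=\Delta_p\pi^x/h$ (which stays bounded by Lemma~\ref{lem.Ib}, since $\Delta_p\pi^x=\Wc_p(\mu_h,\pi^x)\le b h^\alpha$ forces $s\le b h^{\alpha-1}$ — hmm, this needs the sharper bound; in fact on the relevant range $\phi_h(\Delta_p\pi^x)$ is finite so $e^{-ch}\Delta_p\pi^x/h\in\dom\phi$, which is what is needed), the right-hand side is at most $s\,\|u'(\psi_h(x))\|-\phi(e^{-ch}s)+\omega(hs)\cdot s+h\le \phi^*\big(e^{ch}\|u'(\psi_h(x))\|\big)+o(1)$. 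Letting $h\downarrow0$ and using continuity of $\phi^*$ together with $\|u'(\psi_h(x))\|\to\|u'(x)\|$ yields the claim.

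The final step is to promote the pointwise convergence to convergence in $\BUC$ in the sense of \eqref{eq.pfeil}. The uniform bound $0\le\tfrac{I(h)u-T(h)u}{h}\le\phi^*(e^{ch}\|u\|_\Lip)$ gives the required uniform $L^\infty$-bound. For the locally uniform convergence, I would make the two one-sided estimates above uniform over $\|x\|\le r$: in both the error terms depend on $x$ only through the radius $M_r$ of \eqref{eq.Mr} and through the local modulus of continuity of $u'$ on the ball of radius $M_r$ (which is uniformly continuous there since $\|u'(\cdot)\|\in\BUC$ and, after the inf-convolution footnote argument, $u'$ itself can be taken uniformly continuous on bounded sets), so the $o(1)$ terms are uniform in $\|x\|\le r$. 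Hence $\sup_{\|x\|\le r}\big|\tfrac{(I(h)u-T(h)u)(x)}{h}-\phi^*(\|u'(x)\|)\big|\to0$, which is exactly $\tfrac{I(h)u-T(h)u}{h}\to\phi^*(\|u'(\cdot)\|)$ in the sense of \eqref{eq.pfeil}; since each $\tfrac{I(h)u-T(h)u}{h}$ lies in $\BUC$ (both $I(h)u$ and $T(h)u$ do, by Remark~\ref{rem.basic}~b) and Remark~\ref{rem.observations0}~c)) and $\BUC$ is closed under $\to$ (the footnote following the definition of $\BUC$), the limit $\phi^*(\|u'(\cdot)\|)$ belongs to $\BUC$. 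The main obstacle is the second step: controlling the near-optimal couplings $\pi^x$ uniformly — i.e.\ ensuring $\Delta_p\pi^x$ is of order $h$ and not merely $h^\alpha$ on the relevant range — and making the Taylor remainder estimates uniform in $x$ on bounded balls; everything else is a routine combination of Jensen's inequality, Hölder's inequality, Assumption~(A3), and the definition of the convex conjugate.
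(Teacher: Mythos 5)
Your lower-bound argument matches the paper's: you test the supremum in \eqref{eq.isometry} against translates $\nu_{h,\theta}=\mu_h*\delta_{h\theta}$ in a near-maximal direction for $u'$, Taylor-expand, and control the remainder with (A3), Markov's inequality, and the modulus of $u'$ on the ball of radius $M_r$ from \eqref{eq.Mr}. (The Hahn--Banach detour is superfluous: the paper simply writes $\phi^*(\|u'(x)\|)=\sup_{\|\theta\|\le a}\big(u'(x)\theta-\phi(\|\theta\|)\big)$ and keeps the supremum over $\theta$ throughout, which is cleaner and automatically uniform in $x$.)

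The upper bound is where you go astray, and you flag the wrong obstacle. You do \emph{not} need $\Delta_p\pi^x$ to be of order $h$, nor do you need $e^{-ch}\Delta_p\pi^x/h\in\dom\phi$. The inequality
\[
M\cdot\frac{\Wc_p(\mu_h,\nu_h^x)}{h}-\phi\Big(e^{-ch}\tfrac{\Wc_p(\mu_h,\nu_h^x)}{h}\Big)\le \phi^*\big(e^{ch}M\big)
\]
holds for \emph{every} value of the ratio, by the very definition of $\phi^*$; the bound $\Wc_p(\mu_h,\nu_h^x)\le bh^\alpha$ from Lemma~\ref{lem.Ib} is only used to make the coefficient $M$ converge to $\|u'(x)\|$, never to control the ratio $s=\Wc_p(\mu_h,\nu_h^x)/h$ itself. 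So ``$s$ stays bounded'' is a red herring.

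The real gap is in your estimate $\omega(\Delta_p\pi^x)\Delta_p\pi^x$ and the subsequent step $\dots-\phi(e^{-ch}s)+\omega(hs)\,s\le\phi^*(\dots)+o(1)$. First, the error coefficient after H\"older is the $L^q(\pi_h^x)$-norm of $\omega(\|y\|+\|z\|)$, and since moduli of continuity are typically concave, Jensen runs the wrong way to compress this into $\omega$ of a single scalar. Second, even granting a coefficient $\ep_h\to0$, you cannot add $\ep_h\, s$ \emph{after} applying Young to the main term: by your own observation $s$ may grow like $h^{\alpha-1}$, so $\ep_h\,s$ is uncontrolled. The fix is to fold the error into the slope before conjugating,
\[
\big(\|u'(\psi_h(x))\|+\ep_h\big)\,s-\phi(e^{-ch}s)\le\phi^*\Big(e^{ch}\big(\|u'(\psi_h(x))\|+\ep_h\big)\Big),
\]
and then invoke continuity of $\phi^*$. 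The paper achieves exactly this, without ever Taylor-splitting: it introduces the increment ratio $g_{h,x}(y,z)=|u(\psi_h(x)+y)-u(\psi_h(x)+z)|/\|y-z\|$, bounds it pointwise by $\int_0^1\|u'(\psi_h(x)+sy+(1-s)z)\|\,\d s$, applies H\"older once to get $\big(\int|g_{h,x}|^q\,\d\pi_h^x\big)^{1/q}\cdot\tfrac{\Wc_p(\mu_h,\nu_h^x)}{h}$, and shows $\big(\int|g_{h,x}|^q\,\d\pi_h^x\big)^{1/q}\to\|u'(x)\|$ uniformly on $\|x\|\le r$ by splitting the integral over small versus large $\|y\|+\|z\|$ and controlling the tail with Markov and $\Wc_p(\pi_h^x,\delta_0)\to0$. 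Young's inequality then produces $\phi^*$ of a quantity converging to $\|u'(x)\|$, with no boundedness of $s$ required. Your promotion to $\BUC$ via uniform bounds over $\|x\|\le r$ is fine in spirit once the two one-sided estimates are fixed along these lines.
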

	
	\begin{proof}
	By Remark \ref{rem.centerpoint}, $$\sup_{h\in[0,1]}\bigg\|\frac{I(h)u-T(h)u}{h}\bigg\|_{\infty}<\infty,$$
	since $u\in \BUC^1\subset \Lipb$. Let $r\geq 0$. We start by showing that, for $h>0$ sufficiently small,
		\begin{equation}\label{eq.keygenerator1}
		\sup_{\|x\|\leq r}\bigg(\phi^ *\big(\|u'(x)\|\big)-  \frac{\big(I(h)u\big)(x)-\big(T(h)u\big)(x)}{h}\bigg)\leq \ep.
		\end{equation}
		For $h\geq 0$ and $\theta\in X$, let $\nu_{h,\theta}:=\mu_h\ast \delta_{h\theta}$, where $\ast$ denotes the convolution of two probability measures. Clearly, $\Wc_p(\mu_h,\nu_{h,\theta})\leq h\|\theta\|$, and therefore
		\begin{align}
		\notag	&\frac{\big(I(h)u\big)(x)-\big(T(h)u\big)(x)}{h}\geq \frac{1}h\bigg(\int_X u\big(\psi_h(x)+z\big)\, \nu_{h,\theta}(\d z)-\phi_h(h\|\theta\|)-\big(T(h)u\big)(x)\bigg)\\
		\label{eq.keygenerator2}	&\qquad \qquad \qquad\qquad \;\;\;=\int_X \frac{u\big(\psi_h(x)+y+h\theta\big)-u\big(\psi_h(x)+y\big)}{h}\, \mu_h(\d y) -\phi\big(e^{-c h}\|\theta\|\big)
		\end{align}
		for all $x,\theta\in X$ and $h\geq 0$. Since $u'$ is bounded, our assumptions on $\phi$ imply that there exists some constant $a\geq 0$ such that
		\[
		\phi^*\big(\|u'(x)\|\big)=\sup_{v\in [0,a]} \big\{v\|u'(x)\|-\phi(v)\big\}=\sup_{\|\theta\|\leq a} \big\{u'(x)\theta-\phi(\|\theta\|)\big\}\quad \text{for all }x\in X.
		\]
		Fix $\ep>0$. Using Taylor's theorem, the uniform continuity of $u'$ on bounded subsets of $X$, and Remark \ref{rem.observations0} a), there exists some $\delta>0$ such that
		\begin{equation}\label{eq.keygen1}
			u'(x)\theta\leq \frac{u\big(\psi_h(x)+y+h\theta\big)-u\big(\psi_h(x)+y\big)}{h}+\frac{\ep}{2}
		\end{equation}
		for all $x,y,\theta \in X$ with $\|x\|\leq r$, $\|y\|\leq \delta$, and $\|\theta\|\leq a$, and $h>0$ sufficiently small. Moreover, since $u'$ is bounded, it follows from Markov's inequality that 
		\begin{equation}\label{eq.keygen2}
			\bigg|\int_{\{\|y\|> \delta\}}\frac{u\big(\psi_h(x)+y+h\theta\big)-u\big(\psi_h(x)+y\big)}{h}\,\mu_h(\d y)\bigg|\leq \frac{a \|u'\|_\infty}{\delta} \Wc_p(\mu_h,\delta_0)< \frac\ep2
		\end{equation}
		for all $x,\theta\in X$ with $\|x\|\leq r$ and $\|\theta\|\leq a$, and $h>0$ sufficiently small. Therefore, since $\mu_h(\{\|y\| \le\delta\})\to 1$ as $h\downarrow 0$ by Assumption (A3), a combination of \eqref{eq.keygenerator2}, \eqref{eq.keygen1}, and \eqref{eq.keygen2} together with $e^{-c h}\leq 1$, for all $h\geq  0$, yields that
		\[
		u'(x)\theta-\phi(\|\theta \|)\leq \int_X \frac{u\big(\psi_h(x)+y+h\theta\big)-u\big(\psi_h(x)+y\big)}{h}\, \mu_h(\d y)-\phi\big(e^{-c h}\|\theta \|\big)+\ep
		\]
		for all $x,\theta\in X$ with $\|x\|\leq r$ and $\|\theta\|\leq a$, and $h>0$ sufficiently small. Taking the supremum over all $x,\theta \in X$ with $\|x\|\leq r$ and $\|\theta\|\leq a$, \eqref{eq.keygenerator1} follows.\\
		It remains to show that, for $h>0$ sufficiently small,
		\begin{equation}\label{eq.keygenerator3}
		 \sup_{\|x\|\leq r}\bigg(\frac{\big(I(h)u\big)(x)-\big(T(h)u\big)(x)}{h}- \phi^ *\big(\|u'(x)\|\big)\bigg)\leq \ep.
		\end{equation}
		To that end, for $x\in X$ and $h\geq 0$, we consider the auxiliary function $g_{h,x}\colon X\times X\to \R$, defined by
		\[
		g_{h,x}(y,z):=\frac{\big|u\big(\psi_h(x)+y\big)-u\big(\psi_h(x)+z\big)\big|}{\|y-z\|}\quad \text{for all }y,z\in X.
		\]
		Let $\ep>0$, $b\geq 0$ as in Lemma~\ref{lem.Ib} with $C:=\|u\|_\infty$, and $\alpha:=\frac{p-1}{p}$. Then, by Lemma~\ref{lem.Ib}, for all $h\in [0,1]$ and $x\in X$, there exists some $\nu_h^x \in \Pc_p(X)$ with $\Wc_p(\mu_h,\nu_h^x)\leq bh^\alpha$ and 
		\begin{equation}\label{eq.keygenerator4}
		\big(I(h)u\big)(x)\leq \frac{\ep h}{2}+\int_X u\big(\psi_h(x)+z\big)\, \nu_h^x(\d z)-\phi_h\big(\Wc_p(\mu_h,\nu_h^x)\big).
		\end{equation}
		For each $h\in [0,1]$ and $x\in X$, let $\pi_h^x\in \Pc_p(X\times X)$ be an optimal coupling between $\mu_h$ and $\nu_h^x$. Then,
		\begin{align*}
			&\frac{\big(I(h)u\big)(x)-\big(T(h)u\big)(x)}{h}\\
			& \qquad\quad\leq \frac{1}{h}\int_{X\times X}g_{h,x}(y,z)\|y-z\|\, {\pi_h^x}(\d y,\d z)-\frac{\phi_h\big(\Wc_p(\mu_h,\nu_h^x)\big)}{h}+\frac{\ep}2\\
			 &  \qquad\quad\leq \bigg(\int_{X\times X}\big|g_{h,x}(y,z)\big|^q\, \pi_h^x(\d y,\d z)\bigg)^{1/q}\frac{\Wc_p(\mu_h,\nu_h^x)}{h}-\phi\bigg(e^{-c h}\frac{\Wc_p(\mu_h,\nu_h^x)}{h}\bigg)+\frac{\ep}2\\
			& \qquad \quad \leq \phi^*\Bigg(e^{c h}\bigg(\int_{X\times X}\big|g_{h,x}(y,z)\big|^q\, \pi_h^x(\d y,\d z)\bigg)^{1/q}\Bigg)+\frac{\ep}2
		\end{align*}
		for all $h\in [0,1]$ and $x\in X$, where $q=\frac{p}{p-1}$ is the conjugate exponent of $p$. Since $\phi^*$ is convex and continuous, thus uniformly continuous on bounded intervals, and $u'$ is bounded, there exists some $\delta>0$ such that, for all $x\in X$,
		\[
		 \phi^*\big((1+\delta )(\|u'(x)\|+\delta)\big)\leq \phi^*\big(\|u'(x)\|\big)+\frac{\ep}{2}.
		\]
		Since $\phi^*$ is nondecreasing and $e^{c h}\leq 1+\delta $, for $h>0$ sufficiently small, \eqref{eq.keygenerator3} follows as soon as we are able to show that
		\[
		\sup_{\|x\|\leq r}\Bigg(\bigg(\int_{X\times X}\big|g_{h,x}(y,z)\big|^q\, \pi_h^x(\d y,\d z)\bigg)^{1/q}- \|u'(x)\|\Bigg)\leq \delta
		\]
		for $h>0$ sufficiently small.\ Observe that
		\begin{align}
		\notag g_{h,x}(y,z)&=\frac{1}{\|y-z\|}\bigg|\int_0^1 u'\big(\psi_h(x)+sy+(1-s)z\big)(y-z)\,\d s\bigg|\\
		\label{eq.fundamentalthm}&\leq  \int_0^1 \big\|u'\big(\psi_h(x)+sy+(1-s)z\big)\big\|\,\d s.
		\end{align}		
		Using \eqref{eq.fundamentalthm}, the uniform continuity of $u'$ on bounded subsets of $X$, and Remark \ref{rem.observations0} a), there exists some $\delta'>0$ such that
		\[
		g_{h,x}(y,z)\leq \|u'(x)\|+\frac\delta 2
		\]
		for all $x,y,z\in X$ with $\|x\|\leq r$ and $\|y\|+\|z\|\leq \delta'$, and $h>0$ sufficiently small. Using Minkowski's inequality and Markov's inequality together with \eqref{eq.fundamentalthm},  we may conclude that
		\begin{align*}
		\bigg(\int_{X\times X}\big|g_{h,x}(y,z)\big|^q\, \pi_h^x(\d y,\d z)\bigg)^{1/q}&\leq \|u'(x)\|+\frac\delta 2+\bigg(\frac{\|u'\|_\infty}{\delta'}\Wc_p(\pi_h^x,\delta_0)\bigg)^{1/q}\\
		&\leq\|u'(x)\|+\delta
		\end{align*}
		for all $x\in X$ with $\|x\|\leq r$ and $h>0$ sufficiently small, since, by Minkowski's inequality and our global assumption (A3),
		\begin{align*}
		\sup_{x\in X}\Wc_p(\pi_h^x,\delta_0) &\leq \bigg(\int_X \|y\|^p\,\mu_h(\d y)\bigg)^{1/p}+\sup_{x\in X} \Wc_p(\mu_h,\nu_h^x)+\Wc_p(\mu_h,\delta_0)\\
		&\leq 2\bigg(\int_X \|y\|^p\,\mu_h(\d y)\bigg)^{1/p}+bh^\alpha\to 0 \quad \text{as }h\downarrow 0.
		\end{align*}
		The proof is complete.
	\end{proof}
	
	Before we start with the construction of the distributionally robust version of the family $\big(T(t)\big)_{t\geq 0}$, cf. Remark \ref{rem.observations0} c), we define the fundamental object for the rest of our study. 
	
	\begin{definition}\label{def.semigroup}
		We say that a family $S=\big(S(t)\big)_{t\geq 0}$ is a \textit{strongly continuous convex transition semigroup} (on $\BUC$) if the following conditions hold:
		\begin{enumerate}
			\item[(i)] For all $t\geq 0$, $S(t)\colon \BUC \to \BUC $ is convex and monotone with $S(t)m=m$, for every constant (function) $m\in \R$, and $\|S(t)u\|_\Lip\leq \|u\|_\Lip$ for all $u\in \Lipb$.
			\item[(ii)] For all $s,t\geq 0$, $S(t)S(s) =S(t+s)$.
			\item[(iii)] $S(t)u_n\to S(t)u$ uniformly in $t\in [0,s]$ as $n\to \infty$ for all $s\geq 0$, $(u_n)_{n\in \N}\subset \BUC$, and $u\in \BUC$ with $u_n\to u$ as $n\to \infty$.
			\item[(iv)]  $S(t)u\to u $ as $ t \downarrow 0 $ for all $u\in \BUC$.
		\end{enumerate}
		For a strongly continuous convex transition semigroup $S$, we define the \textit{generator} $A\colon D(A)\subset \BUC \to \BUC$ of $S$ by
		\[
		D(A):=\bigg\{u\in \BUC\colon  \frac{S(h)u-u}{h}\to g\in \BUC\text{ as }h\downarrow 0\bigg\}\quad \text{and}\quad Au:=g
		\]
		for $u\in D(A)$ and $g\in \BUC$ with $\frac{S(h)u-u}{h}\to g$ as $h\downarrow 0$.
	\end{definition}

	\begin{remark}\label{remark.Smu}
		Choosing $ \phi := \infty \cdot\eins_{(0,\infty)} $, the results of this section imply that the family $T=\big(T(t)\big)_{t\geq 0}$, given by	
		\[ \big(T(t)u\big)(x) = \int_{X}u\big(\psi_t(x)+y\big)\,\mu_{t}(\d y)\quad \text{for all }t\geq 0,\; u \in \BUC,\text{ and }x\in X, \]
		is a strongly continuous convex (even linear) transition semigroup. Note that, for all $t\geq 0$, the operator $T(t)=I(t)$ is linear, which shows that the properties (i), (ii), and (iii) of a strongly continuous convex transition semigroup are satisfied. The semigroup property (ii) follows from \eqref{eq.consistency}, see also Remark \ref{rem.observations0} c). Throughout the remainder of this section, we denote by $B\colon D(B)\subset \BUC\to \BUC$ the generator of $T$.
	\end{remark}

	For $n\in \N_0$ and $t\geq0$, we define $I^n(t)\colon \BUC\to \BUC$ via the following construction. For $ u \in \BUC $, we define
	\begin{equation}\label{eq.iteratedyadic}
		I^{n}(t)u := \big(\underbrace{I(2^{-n})\cdots I(2^{-n})}_{k\text{-times}}\big)I\big(t-k2^{-n}\big)u=I(2^{-n})^kI\big(t-k2^{-n}\big)u,
	\end{equation}
	where $k\in \N_0$ denotes the largest natural number with $k2^{-n}\leq t$. Then, by Lemma~\ref{lem.monotonicity},
	\[
	I^{n+1}(t)u\leq I^n(t)u\quad \text{for all }n\in \N_0,\; t\geq0,\text{ and }u\in \BUC.
	\]
	We define
	\begin{equation}\label{eq.defst}
	S(t)u:=\inf_{n\in \N_0}I^n(t)u\quad \text{for all }t\geq 0\text{ and }u\in \BUC.
	\end{equation}
	Note that $S(t)u=\lim_{n\to \infty} I^n(t)u$, since $I^{n+1}(t)u\leq I^n(t)u$, for all $n\in \N$, $t\geq 0$, and $u\in \BUC$. The previous results allow us to state the following main result of this section.
	
	\begin{theorem}\label{thm.semigroup}
		The family $S$ is a strongly continuous convex transition semigroup. Let $A$ denote the generator of $S$. Then, $D(B)\cap \BUC^1\subset D(A)$ with
		\[
		\big(Au\big)(x)=\big(B u\big)(x)+\phi^*\big(\|u'(x)\|\big)\quad \text{for all }u\in D(B)\cap \BUC^1\text{ and }x\in X.
		\]
	\end{theorem}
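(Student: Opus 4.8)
The plan is to verify the four structural properties (i)--(iv) of Definition~\ref{def.semigroup} and then the generator formula, in that order. \emph{Well-definedness and property (i).} The key device I would use is that, by the footnote after \eqref{eq.pfeil} together with Assumption~(A2), every $u\in\BUC$ factors through $K$, say $u=u_0(K\,\cdot\,)$ with $u_0$ bounded and uniformly continuous on bounded sets, and that $I(h)u$ again factors through $K$ (since $K\psi_h=\psi_hK$), with a modulus of uniform continuity controlled by that of $u_0$ composed with $\rho\mapsto e^{ch}\rho$, by Assumption~(A1). Iterating along the dyadic decomposition \eqref{eq.iteratedyadic}, the family $\{I^{n}(t)u:n\in\N_0\}$ is uniformly bounded by $\|u\|_\infty$ and has a common modulus of uniform continuity depending only on $u$ and $ct$; since $K$ maps bounded sets to relatively compact sets, this family is equicontinuous on every ball, so the decreasing pointwise limit $S(t)u=\inf_nI^n(t)u$ (the monotonicity $I^{n+1}(t)\le I^n(t)$ being Lemma~\ref{lem.monotonicity}) inherits the modulus and the bound and lies in $\BUC$. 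Convexity, monotonicity, $S(t)m=m$ for constant $m$, and the Lipschitz estimate then pass from $I(h)$ (Remark~\ref{rem.basic}, Lemma~\ref{lem.keyestimates}) to this decreasing limit.

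\emph{Properties (iii) and (iv).} For (iii), if $u_n\to u$ then $\{u_n\}$ is equicontinuous on bounded sets (uniform convergence on the compact images $K(\{\|x\|\le r\})$ plus continuity of each $u_n$), so by the factorization argument above $\{I^{m}(t)u_n:m\in\N_0,\ n\in\N,\ t\in[0,s]\}$ has a common modulus of uniform continuity on every ball; combining this with $\|S(t)u_n-S(t)u\|_\infty\le\|u_n-u\|_\infty\to0$ uniformly in $t$ (Remark~\ref{rem.basic}) and a finite cover of $K(\{\|x\|\le r\})$ upgrades the convergence to the locally uniform, uniform-in-$t$ statement. For (iv) I would sandwich $T$ and $I$: one has $I(\tau)v\ge T(\tau)v$ for all $\tau\ge0$ (take $\nu=\mu_\tau$ in \eqref{eq.isometry}; cf.\ Remark~\ref{rem.centerpoint}), and since $T$ is a genuine semigroup this iterates to $I^n(h)u\ge T(h)u$, so $S(h)u\ge T(h)u$; on the other hand $S(h)u\le I^0(h)u=I(h)u$ for $h\le1$. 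As $T(h)u\to u$ (Remark~\ref{remark.Smu}) and $I(h)u\to u$ (Lemma~\ref{lem.keyestimates}) as $h\downarrow0$, the squeeze gives $S(h)u\to u$.

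\emph{Property (ii): the semigroup property.} This is where the work lies. For dyadic $t,s$ it is elementary: for $n$ large both are integer multiples of $2^{-n}$, so $I^{n}(t)\circ I^{n}(s)=I^{n}(t+s)$ on the nose, and then $S(t)S(s)=S(t+s)$ follows once one knows that $I^m(t)$ commutes with bounded decreasing limits in $\BUC$, i.e.\ $\inf_nI^m(t)v_n=I^m(t)(\inf_nv_n)$ when $(v_n)\subset\BUC$ decreases to some $v\in\BUC$ (indeed then $S(t)S(s)u=\inf_{m,n}I^m(t)I^n(s)u=\inf_kI^k(t)I^k(s)u=\inf_kI^k(t+s)u=S(t+s)u$, using that a double sequence decreasing in both indices has equal diagonal and double infima). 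I would establish the commutation with decreasing limits via Lemma~\ref{lem.Ib}, which confines the measures that are $\varepsilon$-optimal in \eqref{eq.isometry} to a set whose pushforward under $K$ is $\Wc_p$-relatively compact (tightness from the compactness of $K$ and the moment bounds), together with the measurable selection Lemma~\ref{lem.measselection} and a Prokhorov-type extraction; the point is that the functional in \eqref{eq.isometry} depends on $\nu$ only through $K_*\nu$. For general $t,s$ I would then approximate by dyadic times and invoke the already-established properties (iii)--(iv). I expect this interchange of the infimum with $I^m(t)$, together with the passage from dyadic to arbitrary times, to be the principal obstacle.

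\emph{The generator.} Let $u\in D(B)\cap\BUC^1$ and write $\frac{S(h)u-u}{h}=\frac{S(h)u-T(h)u}{h}+\frac{T(h)u-u}{h}$, the second term converging to $Bu$ in $\BUC$ by definition of $B$. For the first term, $0\le S(h)u-T(h)u\le I(h)u-T(h)u$ and Remark~\ref{rem.centerpoint} give the uniform bound, while $S(h)u\le I(h)u$ and Lemma~\ref{lem.keygenerator} give $\limsup_{h\downarrow0}\frac{S(h)u-T(h)u}{h}\le\phi^*(\|u'(\cdot)\|)$ locally uniformly. For the matching lower bound I would fix $\theta\in X$ and, in each subinterval $[t_{j-1},t_j]$ of the dyadic decomposition of $I^n(h)$, insert the competitor $\mu_{t_j-t_{j-1}}\ast\delta_{(t_j-t_{j-1})\theta}$ into \eqref{eq.isometry}, as in the proof of Lemma~\ref{lem.keygenerator}; a Taylor expansion using $u\in\BUC^1$ and Assumption~(A3) shows that the resulting lower bound for $I^n(h)u$ converges, locally uniformly as $n\to\infty$, to $(T(h)u)(\cdot)+h\big(u'(\cdot)\theta-\phi(\|\theta\|)\big)+o(h)$; since $S(h)u=\inf_nI^n(h)u$ this persists after the infimum, and taking the supremum over $\theta$ yields $\liminf_{h\downarrow0}\frac{S(h)u-T(h)u}{h}\ge\phi^*(\|u'(\cdot)\|)$. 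As $\phi^*(\|u'(\cdot)\|)\in\BUC$ by Lemma~\ref{lem.keygenerator}, we conclude $\frac{S(h)u-u}{h}\to Bu+\phi^*(\|u'(\cdot)\|)$ in $\BUC$, i.e.\ $u\in D(A)$ with the asserted formula.
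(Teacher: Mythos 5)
Your overall architecture (verify (i)--(iv), then the generator via an $I(h)$ upper bound and competitor measures for the lower bound) matches the spirit of the paper, but two of your steps have real problems.

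\textbf{Property (iii).} You invoke the estimate $\|S(t)u_n-S(t)u\|_\infty\le\|u_n-u\|_\infty\to 0$. The inequality is fine, but the convergence $u_n\to u$ in the sense of \eqref{eq.pfeil} is only \emph{locally} uniform with uniformly bounded sup-norms; it does \emph{not} give $\|u_n-u\|_\infty\to 0$. Equicontinuity of the family $\{I^m(t)u_n\}$ on balls is useful, but to conclude $S(t)u_n(x)\to S(t)u(x)$ pointwise you still must justify exchanging $\inf_m$ with $\lim_n$, and the $1$-Lipschitz estimate $\|I^m(t)u_n-I^m(t)u\|_\infty\le\|u_n-u\|_\infty$ does not close this. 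The paper resolves it with a convexity trick: for $\lambda\in(0,1)$,
\[
S(t)u_n-S(t)u\leq \lambda\Big(I^m(t)\Big(u+\tfrac{u_n-u}{\lambda}\Big)-I^m(t)u\Big) +2\lambda \|u\|_\infty,
\]
which, together with Lemma~\ref{lem.technical} applied to $u_n^1:=u+\frac{u_n-u}{\lambda}$ and $u_n^2:=u$ and a choice of $\lambda$ small, isolates the needed locally uniform bound \emph{independently of $m$}. Without something of this kind your argument for (iii) is incomplete.

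\textbf{Property (ii).} You assert that the functional in \eqref{eq.isometry} ``depends on $\nu$ only through $K_*\nu$.'' The integral term does factor through $K$ when $u\in\Lipb$, but the penalty $\phi_t\big(\Wc_p(\mu_t,\nu)\big)$ is computed with the full norm on $X$, not a seminorm through $K$, so the claim is false as stated; the proposed Prokhorov-type extraction on $\{K_*\nu\}$ does not directly control the penalty. The paper instead shows that $I^k(s)u$ is a decreasing sequence in $\Lipb$ whose limit $S(s)u$ again lies in $\Lipb$, and applies Dini's lemma on the compact sets $K(\{\|x\|\le r\})$ (using that all these functions are of the form $u_0(K\,\cdot\,)$ with uniformly Lipschitz $u_0$) to upgrade to locally uniform convergence; then property (iii) lets you pass to the limit inside $S(t)$. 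This Dini argument sidesteps any tightness discussion and is the cleaner route.

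\textbf{Generator.} Your upper bound via $S(h)u\leq I(h)u$ and Lemma~\ref{lem.keygenerator} is correct. Your lower bound (inserting the competitor $\mu_{h_j}\ast\delta_{h_j\theta}$ in every subinterval, Taylor expanding, and letting the conclusion survive the infimum over $n$) is a genuinely different and valid route: it effectively shows $S(h)u\geq E(h)u$ for the drift-shift operator of Section~\ref{sec:param}, and then uses the one-sided inequality from the proof of Lemma~\ref{lem.keygenerator}. The paper's proof instead splits $\frac{u-I(h_1)\cdots I(h_m)u}{h}+g$ telescopically and dominates each term by $I_a(t_{k-1})\big(\frac{u-I(h_k)u}{h_k}\big)+g$ via Lemma~\ref{lemma1}, then takes the supremum over partitions. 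Both work; yours is arguably more intuitive (parametric competitor), the paper's is self-contained within the $I_a$-machinery and yields the estimate uniformly over all partitions at once.

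In summary: (i) and (iv) are essentially right, (ii) and (iii) contain genuine gaps as written, and the generator argument is a legitimate alternative to the paper's.
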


	\begin{proof}
		Let $t\geq 0$. Then, for $n\in \N$ sufficiently large, $I^n(t)u\in \BUC$ for all $u\in \BUC$ since, by definition, $I^n(t)$ is a finite composition of operators $I(h)\colon \BUC\to \BUC$ with $h\in (0, h_0]$, cf.\ Lemma~\ref{lem.keyestimates}. Hence, for $n\in \N$ sufficiently large, the operator $I^n(t)\colon \BUC\to \BUC$ is well-defined, convex, and monotone with $I^n(t)m=m$ for all $m\in \R$ and $\|I^n(t)u\|_\Lip\leq e^{ct}\|u\|_\Lip$ for all $u\in \Lipb$ as all these properties carry over from $I(h)$, for $h>0$, to $I^n(t)$. Since $\|I^n(t)u\|_\Lip\leq e^{ct}\|u\|_\Lip$ for all $u\in \Lipb$ and
		\begin{equation}\label{eq.proofmain000}
		 T(t)u\leq S(t)u\leq I^n(t)u\quad\mbox{for all }n\in \N\mbox{ and }u\in \BUC,
		\end{equation}
                it follows that $S(t)u\in \Lipb$ with $\|S(t)u\|_\Lip\leq e^{ct}\|u\|_\Lip$ for all $u\in \Lipb$. Next, we verify property (iii) in Definition \ref{def.semigroup}. For this, we even show a slightly stronger property. For $i=1,2$, let $(u_k^i)_{k\in \N}\subset \BUC$ with $(u_k^1-u_k^2)\to 0$ as $k\to \infty$, i.e.,
	       \[
	        \sup_{n\in \N} \|u_k^1-u_k^2\|_\infty<\infty\quad \text{and}\quad \lim_{k\to \infty}\sup_{\|x\|\leq r}\big|u_k^1(x)-u_k^2(x)\big|\to 0\quad \text{for all }r\geq 0.
	       \]
	       We prove that, for $s\geq 0$ and $r\geq 0$,
        	 \begin{equation}\label{eq.convS}
        	 \lim_{k\to \infty} \sup_{t\in [0,s]}\sup_{\|x\|\leq r} \big|\big(S(t)u_k^1\big)(x)-\big(S(t)u_k^2\big)(x)\big|=0.
	        \end{equation}
                That is, $\big(S(t)u_k^1-S(t)u_k^2\big)\to 0$ uniformly in $t\in [0,s]$ as $k\to \infty$ for all $s\geq 0$. To that end, observe that, for $t\geq 0$, $\lambda\in (0,1)$, and $n\in \N$,\footnote{The first inequality of the following estimate follows from the convexity of the mapping $z\mapsto S(t)(u_k^2+z)-S(t)u_k^2$ for $z_1=\frac{u_k^1-u_k^2}{\lambda}$ and $z_2=0$.}
                \begin{align*}
                 S(t)u_k^1-S(t)u_k^2&\leq \lambda\Big(S(t)\Big(u_k^2+\tfrac{u_k^1-u_k^2}{\lambda}\Big)-S(t)u_k^2\Big)\leq \lambda\Big(I^n(t)\Big(u_k^2+\tfrac{u_k^1-u_k^2}{\lambda}\Big)-S(t)u_k^2\Big)\\
                 &\leq \lambda\Big(I^n(t)\Big(u_k^2+\tfrac{u_k^1-u_k^2}{\lambda}\Big)-I^n(t)u_k^2\Big) +2\lambda \|u_k^2\|_\infty.
                \end{align*}
                The statement now follows from a symmetry argument together with Lemma~\ref{lem.technical} and an appropriate choice of $\lambda\in (0,1)$ (sufficiently small), $n\in \N$ (sufficiently large in order to achieve $2^{-n}\leq h_0$ in Lemma~\ref{lem.technical}), and $k\in \N$ (sufficiently large). Approximating $u\in \BUC$ with a sequence $(u_k)_{n\in \N}\subset \Lipb$, it follows that $S(t)\colon \BUC\to \BUC$ is well-defined. Since all other properties stated in (i) of Definition \ref{def.semigroup} directly carry over from $I^n(t)$ to the limit, $S(t)$ satisfies these properties. Moreover, \eqref{eq.proofmain000} together with Lemma~\ref{lem.keyestimates} and Remark \ref{remark.Smu} implies that $S(t)u\to u$ as $t\downarrow 0$ for all $u\in \BUC$.\\
                In order to verify the semigroup property (ii), we fix $s,t\geq 0$ and $u\in \Lipb$. Consider the set $\mathcal D:=\{k2^{-n} \colon  k,n\in \N_0\}$ of all dyadic numbers. We first show the semigroup property in the case, where $t$ is a dyadic number, i.e., $t\in \mathcal D$. Then, by definition of $S$, 
                \[ S(t+s)u = \lim_{n \to \infty}I^n(t+s)u=\lim_{n \to \infty}I^n(t)I^n(s)u, \]
                where, in the second equality, we used the fact that $t$ is a dyadic number. Due to the monotonicity of $S$, $I^n(t)I^n(s)u\geq S(t)S(s)u$ for all $n\in \N$. On the other hand, for fixed $k\in \N$,
                \[
                 \lim_{n \to \infty}I^n(t)I^n(s)u\leq \lim_{n \to \infty}I^n(t)I^k(s)u=S(t)I^k(s)u.
                \]
                Now, since $I^k(s)u$, for $k\in \N$, is a decreasing sequence of monotone functions in $\Lipb$, it follows that $I^k(s)u\to S(s)u$. In fact, since $I^k(s)u\in \Lipb$, there exist bounded and Lipschitz continuous functions $u^k\colon X\to \R$ such that $\big(I^k(s)u\big)(x)=u^k(Kx)$ for all $x\in X$ and $k\in \N$. Again, since $S(t)\in \Lipb$, there exists a bounded and Lipschitz continuous function $u_0\colon X\to \R$. By Dini's lemma, it follows that
                \[
                 \lim_{k\to \infty}\sup_{\|x\|\leq r}\big|I^k(s)u\big)(x)-\big(S(s)u\big)(x)\big|=\lim_{k\to \infty}\sup_{\|x\|\leq r}|u^k(Kx)-u_0(Kx)|= 0
                \]
                for all $r\geq 0$, where we used the fact that $K$ is a compact operator.  We have therefore shown that $S(t+s)u=S(t)S(s)u$, when $t$ is a dyadic number and $u\in \Lipb$. For the general case, we approximate $t\geq 0$ with dyadic numbers $(t_n)_{n\in \N}\subset \mathcal D$ and $u\in \BUC$ with functions $(u_n)_{n\in \N}\subset \Lipb$, and obtain, using the properties (iii) and (iv),
                \[
                 S(t+s)u = \lim_{n \to \infty} S(t_n+s)u_n = \lim_{n \to \infty}S(t_n)S(s)u_n = S(t)S(s)u.
                \]
		Now, let $u\in D(B)\cap \BUC^1$ and $g(x):=\big(Bu\big)(x)+\phi^*\big(\|u'(x)\|\big)$ for all $x\in X$. By definition of the generator $B$ and Lemma~\ref{lem.keygenerator}, we find that $g\in \BUC$. Let $a\geq 0$ and $I_a(t)$, for $t\geq 0$, be given as in Lemma~\ref{lemma1} with $L:=\|u\|_\Lip$. Let $h\in [0,1]$ and $0=t_0<\ldots<t_m=h$ be a partition of the interval $[0,h]$ with $m\in \N$ and $h_k:=t_k-t_{k-1}$ for $k\in \{1,\ldots, m\}$. Then, by Lemma~\ref{lemma1} and Lemma ~\ref{lem.monotonicity} for the family of operators $\big(I_a(t)\big)_{t\geq 0}$,
		\begin{align*}
		 \frac{I(h_1)\cdots I(h_{k-1})u-I(h_1)\cdots I(h_k)u}{h_k}+g&\leq I_a(h_1)\cdots I_a(h_{k-1})\bigg(\frac{u-I(h_k)u}{h_k}\bigg)+g\\
		 &\leq I_a(t_{k-1})\bigg(\frac{u-I(h_k)u}{h_k}\bigg)+g.
		\end{align*}
                We thus obtain that
		\begin{align*}
		\frac{u-I(h_1)\cdots I(h_m)u}{h}+g&= \sum_{k=1}^m\frac{h_k}{h}\bigg(\frac{I(h_1)\cdots I(h_{k-1})u-I(h_1)\cdots I(h_k)u}{h_k}+g\bigg)\\
		& \leq \sum_{k=1}^m\frac{h_k}{h}\Bigg(I_a(t_{k-1})\bigg(\frac{u-I(h_k)u}{h_k}\bigg)+g\Bigg).
		\end{align*}
	 Taking the supremum over all finite partitions of the interval $[0,h]$, it follows that
	 \[
	  \frac{u-S(h)u}{h}+g\leq \sup_{t,h'\in (0,h]}\frac{h'}{h}\Bigg(I_a(t)\bigg(\frac{u-I(h')u}{h'}\bigg)+g\Bigg).
	 \]
                Using the convergence results for the family of operators $\big(I_a(t)\big)_{t\geq 0}$ from Lemma~\ref{lem.technical} and Lemma~\ref{lem.keyestimates} together with Lemma~\ref{lem.keygenerator}, we observe that the right-hand side converges to zero as $h\downarrow 0$. This together with the observation
		\[
		\frac{S(h)u-u}{h}-g\leq \frac{I(h)u-u}{h}-g,
		\]
                yields that $u\in D(A)$ with $Au=g$.
	\end{proof}
	
	\begin{remark}\label{rem:Ipi}
		For $t>0$, let $P_t$ denote the set of all partitions $\pi=\{t_0,\ldots, t_m\}$ with $0=t_0<\ldots <t_m=t$ and $m\in \N$. For $t>0$, $\pi\in P_t$, and $u\in \BUC$, we define
		\[
		I(\pi) u:=I({t_1-t_0})\cdots I({t_m-t_{m-1}})u.
		\]
		Then, for $t>0$ and $u\in \BUC$,
		\[
		S(t)u=\inf_{\pi\in P_t} I(\pi) u.
		\]
		Indeed, by definition of $S(t)$, the inequality $S(t)u\geq \inf_{\pi\in P_t} I(\pi) u$ holds true.\ In order to establish the other inequality, let $\pi\in P_t$. Then, using the semigroup property of $S$,
		\[
		I(\pi) u\geq S({t_1-t_0})\cdots S({t_m-t_{m-1}})u=S(t)u\quad \text{for all }u\in \BUC.
		\]
		
	\end{remark}

	\section{Extensions and Remarks}\label{sec.remarks}

	\subsection{On the particular form of the transition kernels}\label{sec.discussion}
	In this section, we discuss the limitations of our setup in terms of the structural assumption \eqref{eq.markovdecomp}, i.e.,
%	\[
%	p_t(x,B):=\mu_t\big(\{y\in X\colon\psi_t(x)+y\in B\}\big),\quad\mbox{for }t\geq 0,\; x\in X,\text{ and }B\in\Bc(X),
%	\]
	\[
     \Xi_t^x=\psi_t(x)+Y_t,
    \]
	on the reference Markov process $(\Xi_t^x)$.  In the following, we explore how this class behaves regarding
	\begin{enumerate}
		\item[a)] deterministic dynamics,
		\item[b)] stochastic dynamics arising from affine processes,
		\item[c)] the structure of the underlying Banach space and vector operations.
	\end{enumerate}	
	\subsubsection*{a) Deterministic dynamics}
	In this part, we consider the case of deterministic dynamics, i.e., $Y_t\equiv0$ for all $t\geq 0$ in \eqref{eq.markovdecomp}. Then, the consistency condition \eqref{eq.consistency}, simplifies to the flow property of the family $(\psi_t)_{t\geq 0}$, i.e.,
	\[
	\psi_t\circ \psi_s=\psi_{t+s}\quad \text{for all }s,t\geq 0.
	\]
	Since $\psi_0(x)=x$ for all $x\in X$, this means that $(\psi_t)_{t\geq 0}$ is a so-called semiflow, and the reference semigroup is the so-called Koopman semigroup related to $(\psi_t)_{t\geq 0}$.\  In particular, the dynamics comprise all solutions to stationary ordinary differential equations with a global Lipschitz condition as well as solutions to linear partial differential equations as we discuss in Example \ref{sec.koop} and Example \ref{sec.lindyn}, respectively.
	\subsubsection*{b) Stochastic dynamics arising from affine processes}
	In stochastic modeling, random evolutions are oftentimes implemented using stochastic differential equations (SDEs). In this case, structural assumptions are imposed on the coefficients appearing in the SDE. A widely used assumption is the one that the It\^o diffusion is an affine process, i.e., up to technicalities, the coefficients appearing in the SDE are affine linear functions depending on the current state of the process, cf.\ Filipovi\'{c} \cite{MR2553163}.\ In this section, we investigate how the class of Markov processes that admit a decomposition of the form \eqref{eq.markovdecomp} relates to the class of affine processes.
	
	Assume that $(\Xi_t^x)$ is an $\R^d$-valued stochastic process, which is a solution to the SDE
	\[
	\d \Xi_t^x=b(\Xi_t^x)\, \d t+\rho(\Xi_t^x)\,\d W_t,\quad \Xi_0^x=x\in \R^d,
	\]
	where $(W_t)_{t\geq 0}$ is a $d$-dimensional Brownian motion on a suitably filtered probability space $(\Omega,\mathcal F,(\mathcal F_t)_{t\geq 0},\P)$ satisfying the usual assumptions, $b\colon \R^d\to \R^d$ is continuous, and $\rho\colon \R^d\to \R^{d\times d}$ is continuous and takes values in the set of all symmetric positive semidefinite matrices. Assume that $(\Xi_t^x)$ is an affine process in the sense that $b$ and $a:=\rho^T\rho$ are affine functions, i.e.,
	%\begin{align}
	%	\label{eq.affa} a(x)&=\alpha_0+\sum_{i=1}^dx_i\alpha_i\quad\text{and}\\
	%	\label{eq.affb} b(x)&=\beta_0 +\sum_{i=1}^dx_i\beta_i=b+\beta x
	%\end{align}
	\[
	 a(x)=\alpha_0+\sum_{i=1}^dx_i\alpha_i\quad\text{and}\quad  b(x)=\beta_0 +\sum_{i=1}^dx_i\beta_i=\beta_0+\beta x
	\]
	with some matrices $\alpha_0,\ldots,\alpha_d\in \R^{d\times d}$, $\beta_0,\ldots,\beta_d\in \R^d$ and $\beta=(\beta_1,\ldots,\beta_d)^T$, and that, for all $u\in i\R^d$, the Riccati equations
	\begin{align}
		\label{eq.Ric1} \partial_t \Phi(t,u)&=\frac12 \Psi(t,u)^T\alpha_0\Psi(t,u)+\beta_0^T \Psi(t,u),\quad \Phi(0,u)=0,\\
		\label{eq.Ric2} \partial_t	\Psi_i(t,u)&=\frac12 \Psi(t,u)^T\alpha_i\Psi(t,u)+\beta_i^T \Psi(t,u),\quad \Psi(0,u)=u,
	\end{align} 
	have a solution $(\Phi,\Psi)$ with $\Psi=(\Psi_1,\ldots, \Psi_d)$ and ${\rm Re}\, \big(\Phi(t,u)+\Psi(t,u)^T x\big)\leq 0$. Then, the characteristic function $\phi_\Xi$ of $(\Xi_t^x)$ is of the form
	\[
	\qquad\phi_\Xi(t,x,u):=\E\big(e^{u^T \Xi_t^x}\big)=e^{\Phi(t,u)+ \Psi(t,u)^T x}\quad \text{for all }t\geq 0,\; x\in \R^d,\text{ and }u\in i\R^d.
	\]
	In this case, our structural assumption that $(\Xi_t^x)$ is of the form \eqref{eq.markovdecomp}, implies that
	\begin{equation}\label{eq.aff}
	e^{\Phi(t,u)+ \Psi(t,u)^T x}=\E\big(e^{u^T \Xi_t^x}\big)=e^{u^T \psi_t(x)} \E\big( e^{u^T Y_t}\big)
	\end{equation}
	for all $t\geq 0$, $x\in \R^d$, and $u\in i\R^d$. For $x=0$, this yields that $	e^{\Phi(t,u)}=e^{u^T\psi_t(0)} \E\big( e^{u^T Y_t}\big)$
	%\[
	%e^{\Phi(t,u)}=e^{u^T\psi_t(0)} \E\big( e^{u^T Y_t}\big)
	%\]
	for all $t\geq 0$ and $u\in i\R^d$, which together with \eqref{eq.aff} leads to
	\[
	\Psi(t,u)^T x=u^T \big(\psi_t(x)-\psi_t(0)\big)\quad \text{for all }t\geq 0,\; x\in \R^d,\text{ and }u\in i\R^d.
	\]
	Hence, for all $t\geq 0$, the maps $u\mapsto \Psi(t,u)$ and $x\mapsto \psi_t(x)-\psi_t(0)$ are linear in $u$ and $x$, respectively. Since $(\Phi,\Psi)$ solve the Riccati equations \eqref{eq.Ric1} and \eqref{eq.Ric2}, it follows that $\alpha_i=0$ for $i=1,\ldots,d$ and
	$$\Psi(t,u)=e^{t \beta}u\quad \text{for all }t\geq 0\text{ and }u\in i\R^d.$$
	As a consequence,
	$$
	\d \Xi_t^x=\big(\beta_0+\beta X_t^x\big)\, \d t+\sqrt{\alpha_0}\,\d W_t,\quad \Xi_0^x=x\in \R^d,
	$$
	and
	$$\psi_t(x)=e^{t \beta}x+\int_0^te^{\beta s}\beta_0\, \d s\quad \text{for all }t\geq 0\text{ and }x\in \R^d.$$
	We conclude that every affine process $(\Xi_t^x)$, which satisfies \eqref{eq.markovdecomp}, is, up to a (e.g, logarithmic) transformation, see part c), an Ornstein-Uhlenbeck process.
	
   \subsubsection*{c) Structure of the underlying Banach space and vector operations}
	%Let $\psi_t\colon X\to X$ be a map with
%\[
%\big\|V\big(\psi_t(x_1)\big)-V\big(\psi_t(x_2)\big)\big\|\leq \|V(x_1)-V(x_2)\| \quad \text{for all }t\geq 0\text{ and }x_1,x_2\in X. 
%\]
 In this part, we show how different vector operations on an a priori arbitrary nonempty set $X$ can be used to incororate more general versions of the additive operation $+$ in \eqref{eq.markovdecomp}, for instance a multiplication. For this purpose, let $X$ be a nonempty set, and assume that there exists a bijective function $V\colon X\to M$, where $M$ is a separable Banach space.
	Then, for $x,y\in X$ and $\la\in \R$, the operations
	\[
	x+_Vy:=V^{-1}\big(V(x)+V(y)\big),\quad \lambda \cdot_Vx :=V^{-1}\big(\la V(x)\big),\quad \text{and}\quad \|x\|_V:=\|V(x)\|
	\]
	define an addition, a scalar multiplication, and a norm on $X$, respectively. By definition, the map $V\colon X\to M$ is an isometric isomorphism making $X$ a separable Banach space, as well. In this setup, one may consider flows of the form
	\[
	\Xi_t^x=\psi_t(x)+_VY_t.
	\]
	For a function $u\in \BUC$ with $u\circ V^{-1}\colon M\to \R$ differentiable and $x\in X$, the derivative $u'(x)\colon X\to \R$ (w.r.t.\ the addition $+_V$ and scalar multiplication $\cdot_V$) is then given by 
	\begin{align}
		\begin{split}\label{eq.derivative}
		u'(x)z&=\lim_{h\downarrow 0} \frac{u\big(x+_V (h\cdot_V z)\big)-u(x)}{h}=\lim_{h\downarrow 0} \frac{u\Big(V^{-1}\big(V(x)+hV(z)\big)\Big)-u(x)}{h}\\
		&=\Big(D\big(u\circ V^{-1}\big)\big(V(x)\big)\Big)V(z)\quad \text{for all }z\in X.
		\end{split}
	\end{align}
	\begin{remark}
        Assume that $X$ is an open subset of some Banach space $X_0$ and that $V\colon X\to M$ is a $C^1$-diffeomorphism from $X$ (as an open subset of $X_0$) to $M$. Then, Equation \eqref{eq.derivative} together with the chain rule implies that, for any continuously differentiable function $u\colon X\to \R$ and $x,z\in X$,
	\begin{equation}\label{eq.Vderivative}
		u'(x)z=\big(Du(x)\big)\big(DV(x)\big)^{-1}V(z),
	\end{equation}
	where $Du$ and $DV$ denote the derivatives of $u$ and $V$ on $X$ as a subset of $X_0$, respectively. That is, for every $x\in X$, $Du(x)$ is an element of the topological dual space of $X_0$ and $\big(DV(x)\big)^{-1}$ is a bounded linear operator from $M$ to $X_0$. In this case,
	\[
	\|u'(x)\|=\sup_{\substack{z\in X\\ V(z)\neq 0}}\frac{|u'(x)z|}{\|V(z)\|}\leq \Big\|\big(Du(x)\big)\big(DV(x)\big)^{-1}\Big\|,
	\]
	where the norm appearing on the left-hand side is the operator norm w.r.t.\ $\|\cdot\|_V$ and the norm on the right-hand side is the operator norm on the dual space of $M$. 	 
	\end{remark}

%        Choosing $V=\log$, our setup covers, for example, the case of geometric dynamics, and the term in the penalisation closely relates to the first order term appearing in the generator of a geometric Brownian motion.
  In the following example, we discuss the previously introduced setup for logarithmic tansformations.
	\begin{example}\label{ex.GBM}
		Consider the particular choices $X=(0,\infty)$, $M=\R$, and $V(x)=\log x$. Then,
		\[
		x+_Vy=xy,\quad \lambda\cdot_V x=x^\lambda,\quad \text{and}\quad \|x\|_V=|\log x|.
		\]
		This leads to dynamics of the form
		\[
		\Xi_t^x=\psi_t(x)Y_t,
		\]
		see Example \ref{ex.geombm} for the case of a geometric Brownian motion. %where $Y_t$ corresponds, for example, to a geometric Brownian motion starting in $1$, i.e., $Y_t=\exp\big(\big(\mu-\tfrac{\sigma^2}{2}\big)+\sigma W_t\big)$ with $\mu\in \R$, $\sigma\geq 0$, and a standard Brownian motion $(W_t)_{t\geq 0}$ defined on some probability space $(\Omega,\mathcal F,\P)$.
		Then, for any continuously differentiable function $u\in \BUC$, the derivative $u'$ is given by
		\[
		u'(x):=u'(x)e=\lim_{h\downarrow 0} \frac{u\big(x+_V (h\cdot_V e)\big)-u(x)}{h}=\lim_{h\downarrow 0} \frac{u(xe^h)-u(x)}{h}=x\frac{\d}{\d x}u(x),
		\]
		where $e$ is the Euler constant and $\frac{\d}{\d x}u$ denotes the usual derivative of $u$.\ Note that the equality $u'(x)=x\frac{\d}{\d x}u(x)$ also follows from \eqref{eq.Vderivative} with $z=e$, since $\big(\frac{\d}{\d x}\log(x)\big)^{-1}=x$ for all $x\in (0,\infty)$. In particular, $\|u'(x)\|=\big|x\frac{\d}{\d x}u(x)\big|$ for all $x\in (0,\infty)$.
	\end{example}

	\subsection{Viscosity solutions}\label{sec.viscosity}
	
	Note that Theorem \ref{thm.semigroup} shows that the family $S$ is a semigroup, which is, at least formally, closely related to a dynamic programming principle. Moreover, we have shown that the local behaviour at time $t=0$, given in terms of the generator $A$, is given by
	\[
	\big(Au\big)(x)=\big(Bu\big)(x)+\phi^ *\big(\|u'(x)\|\big)\quad \text{for }u\in D(B)\cap \BUC^1\text{ and }x\in X,
	\]
	where $B$ is the generator of the reference semigroup $T$ related to the transition kernels $(p_t)_{t\geq 0}$. In view of these results, it is a natural question, whether the semigroup $S$ gives rise to viscosity solutions to the abstract HJB-type differential equation
	\begin{equation}\label{eq.viscositysol}
		\partial_tu=Au\quad \text{for }t\geq 0.
	\end{equation}
	Using a standard procedure from Denk et al.\ \cite{denk2020semigroup}, Nendel and R\"ockner \cite{roecknen}, or Bartl et al.\ \cite{bartl2020limits}, which can be almost literally adapted to our setup, one can show that the map $t\mapsto S(t)u_0$, for fixed $u_0\in \BUC$, is a viscosity solution to \eqref{eq.viscositysol} using the following notion of a viscosity solution. 
	\begin{definition}
		We say that a function $u\colon [0,\infty)\to \BUC$ is a \textit{viscosity subsolution} to the abstract differential equation \eqref{eq.viscositysol} if $u\colon [0,\infty)\to \BUC$ is continuous and
		\[
		\big(\psi'(t)\big)(x)\leq \big(A \psi(t)\big)(x)
		\]
		for all $t>0$, $x\in M$, and every differentiable function $\psi\colon (0,\infty)\to \BUC$ satisfying $\psi(t)\in D(B)\cap \BUC^1$, $\big(\psi(t)\big)(x)=\big(u(t)\big)(x)$, and $\psi(s)\geq u(s)$ for all $s>0$. A function $u\colon [0,\infty)\to \BUC$ is called a \textit{viscosity supersolution} to \eqref{eq.viscositysol} if $-u$ is a viscosity subsolution to \eqref{eq.viscositysol} with $A$ replaced by $-A(-\,\cdot\,)$. A function $u\colon [0,\infty)\to \BUC$ is called a \textit{viscosity solution} to \eqref{eq.viscositysol} if it is both a viscosity subsolution and a viscosity supersolution to \eqref{eq.viscositysol}.
	\end{definition}
	
	We obtain the following proposition.
	\begin{proposition}\label{thm.viscosity}
		Let $u_0\in \BUC$. Then, the function $u\colon [0,\infty)\to \BUC,\; t\mapsto S(t)u_0$ is a viscosity solution to the abstract differential equation \eqref{eq.viscositysol} with $u(0)=u_0$.
	\end{proposition}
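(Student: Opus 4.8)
The plan is to verify the subsolution property; the supersolution property then follows by the symmetry built into the definition (applying the subsolution argument to $-u$ with $A$ replaced by $-A(-\,\cdot\,)$, noting that the semigroup $S$ is convex so $-S(t)(-\,\cdot\,)$ is concave but the abstract argument goes through verbatim). First I would record that $t\mapsto S(t)u_0$ is continuous from $[0,\infty)$ to $\BUC$: this is immediate from the semigroup property (ii), the strong continuity at $0$ in property (iv), and the nonexpansivity estimate \eqref{eq.rembasic1} which carries over to $S(t)$, giving $\|S(t+h)u_0-S(t)u_0\|_\infty=\|S(t)(S(h)u_0)-S(t)u_0\|_\infty\le \|S(h)u_0-u_0\|_\infty\to 0$.

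Next comes the core argument. Fix $t>0$, $x\in M$, and a differentiable test function $\psi\colon(0,\infty)\to\BUC$ with $\psi(s)\in D(B)\cap\BUC^1$ for all $s$, touching $u$ from above at $t$ in the sense that $(\psi(t))(x)=(u(t))(x)$ and $\psi(s)\ge u(s)=S(s)u_0$ for all $s>0$. Using the semigroup property $S(t)u_0=S(h)S(t-h)u_0$ for small $h>0$, together with monotonicity of $S(h)$ and the fact that $S(t-h)u_0=u(t-h)\le\psi(t-h)$, I get
\[
(u(t))(x)=\big(S(h)u(t-h)\big)(x)\le \big(S(h)\psi(t-h)\big)(x).
\]
Now I would split off the time-derivative of $\psi$: writing $\psi(t-h)=\psi(t)-h\psi'(t)+o(h)$ in $\BUC$ and using the $1$-Lipschitz-type bound \eqref{eq.rembasic1} for $S(h)$ to absorb the $o(h)$ and the $-h\psi'(t)$ terms, one obtains
\[
(u(t))(x)\le \big(S(h)\psi(t)\big)(x)-h\big(\psi'(t)\big)(x)+o(h).
\]
Subtracting $(\psi(t))(x)=(u(t))(x)$, dividing by $h$, and letting $h\downarrow 0$, the right-hand side converges because $\psi(t)\in D(B)\cap\BUC^1\subset D(A)$ by Theorem \ref{thm.semigroup}, so $\frac{S(h)\psi(t)-\psi(t)}{h}\to A\psi(t)$ in $\BUC$ (hence pointwise at $x$). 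This yields $0\le \big(A\psi(t)\big)(x)-\big(\psi'(t)\big)(x)$, which is exactly the subsolution inequality.

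The main obstacle is the bookkeeping in the expansion $\psi(t-h)=\psi(t)-h\psi'(t)+o(h)$: one needs this to hold in the $\BUC$-norm (which is how differentiability of $\psi$ is defined) and then to control $S(h)$ applied to each piece. The term $S(h)(\psi(t)-h\psi'(t)+r_h)$ with $\|r_h\|_\infty=o(h)$ is handled by the nonexpansivity $\|S(h)v-S(h)w\|_\infty\le\|v-w\|_\infty$, so $S(h)\psi(t-h)$ and $S(h)\psi(t)$ differ by at most $h\|\psi'(t)\|_\infty+o(h)$ in sup-norm, and after dividing by $h$ the $o(h)/h$ part vanishes while the $h\|\psi'(t)\|_\infty/h$ part must be tracked with the correct sign — this is why one keeps $-h(\psi'(t))(x)$ as a genuine linear term rather than bounding it crudely. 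A clean way to make this rigorous is to replace the one-sided touching condition by the standard trick of testing against $\psi(s)+\varepsilon|s-t|^2$ or simply to invoke the by-now-standard argument of \cite{denk2020semigroup, roecknen, bartl2020limits}, which, as the text already notes, adapts almost verbatim; I would cite those references for the routine details and present only the semigroup-monotonicity step above, since that is the only place the specific structure of $S$ enters.
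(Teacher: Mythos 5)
Your overall structure matches the intended argument (and the paper itself gives no details, deferring to \cite{denk2020semigroup,roecknen,bartl2020limits}): continuity of $t\mapsto S(t)u_0$ from nonexpansivity, then the semigroup identity $S(t)u_0=S(h)S(t-h)u_0$ combined with monotonicity and $\psi(t-h)\ge u(t-h)$ to land on $\big(u(t)\big)(x)\le \big(S(h)\psi(t-h)\big)(x)$, then a Taylor expansion in $h$ and the generator convergence at $\psi(t)\in D(B)\cap\BUC^1\subset D(A)$. That skeleton is correct.

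The gap is precisely where you say you would ``use the $1$-Lipschitz-type bound \eqref{eq.rembasic1} for $S(h)$ to absorb the $o(h)$ and the $-h\psi'(t)$ terms.'' Nonexpansivity of $S(h)$ gives only
\[
\big|\big(S(h)\psi(t-h)\big)(x)-\big(S(h)\psi(t)\big)(x)\big|\le \|\psi(t-h)-\psi(t)\|_\infty\le h\|\psi'(t)\|_\infty+o(h),
\]
i.e.\ a symmetric bound by $\|\psi'(t)\|_\infty$, \emph{not} the signed linear term $-h\big(\psi'(t)\big)(x)$. You identify this yourself (``must be tracked with the correct sign'') but do not supply a mechanism that produces the sign; neither the strict-touching trick $\psi(s)+\varepsilon|s-t|^2$ nor the nonexpansivity estimate does it, because the difficulty is that $S(h)$ is nonlinear and one cannot simply pull the non-constant function $h\psi'(t)$ out of $S(h)(\,\cdot\,)$. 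The missing ingredient is the \emph{convexity} of $S(h)$, used exactly as in the footnote to the proof of Theorem~\ref{thm.semigroup}: with $\lambda=h$, convexity gives
\[
S(h)\psi(t-h)-S(h)\psi(t)\;\le\; h\Big(S(h)\big(\psi(t)+\tfrac{\psi(t-h)-\psi(t)}{h}\big)-S(h)\psi(t)\Big),
\]
and since $\psi(t)+\tfrac{\psi(t-h)-\psi(t)}{h}\to\psi(t)-\psi'(t)$ in $\BUC$, the right-hand side, after dividing by $h$, converges (by the strong continuity properties (iii)--(iv) of $S$) to $-\big(\psi'(t)\big)(x)$ pointwise. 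Combined with $\tfrac{S(h)\psi(t)-\psi(t)}{h}\to A\psi(t)$, this yields $0\le\big(A\psi(t)\big)(x)-\big(\psi'(t)\big)(x)$. For the supersolution part your ``verbatim by symmetry'' claim is also not quite right: the operator $v\mapsto -S(t)(-v)$ is concave, so the convexity inequality above reverses; one instead applies convexity of $S(h)$ with the roles of $\psi(t)$ and $\psi(t-h)$ interchanged to obtain the corresponding lower bound $\ge -\big(\psi'(t)\big)(x)$ for the incremental quotient. With these two adjustments the proof closes; without them, the stated tools (nonexpansivity only) do not reach the conclusion.
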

	
	The proof is up to the considered function space (here $\BUC$) almost literally the same as in Denk et al.\ \cite[Proposition 5.11]{denk2020semigroup}, Nendel and R\"ockner \cite[Theorem 4.5]{roecknen}, or Bartl et al.\ \cite[Theorem 2.12]{bartl2020limits}.
	
	\subsection{Unbounded functions and additional continuity properties}\label{sec.unbdd}
	Although we consider the space $\BUC$ resulting as a closure from bounded and suitably Lipschitz continuous functions, the construction of the semigroup and many statements carry over to the space $\Lip$ of all functions $u\colon X\to \R$ satisfying \eqref{eq.lipcondK}. For such functions, $\|\cdot\|_\Lip$ is defined as before. The key ingredient in order to establish the semigroup property of $\big(S(t)\big)_{t\geq 0}$ on $\Lip$ is an additional continuity property of $S(t)$ for all $t\geq 0$.
	We start with the following observations.
	
	\begin{remark}\label{rem.unbdd}\
	Let $u\in \Lip$ and $t\geq 0$. We consider $T(t)u$, $I(t)u$, and $S(t)u$ given by \eqref{Smu}, \eqref{eq.isometry}, and \eqref{eq.defst}, respectively. Then, 
	\begin{equation}\label{eq.lingrowth}
	|u(x)|\leq C\big(1+\|x\|\big)\quad \text{for all }x\in X,
	\end{equation}
	with $C:=\max\big\{|u(0)|,\|u\|_\Lip\big\}$, which, by Assumption (A3), implies that
	\[
	\big|\big(T(t)u\big)(x)\big|\leq C\bigg(1+\|\psi_t(x)\|+\int_X \|y\|\, \mu_t(\d y)\bigg)<\infty. 
	\]
	Moreover, by Remark~\ref{rem.centerpoint}, it follows that 
	\[
	I(t)u\leq T(t)u+t\phi^*\big(e^{ct}\|u\|_\Lip\big)
	\]
	for all $u\in \Lip$. We thus end up with the estimate
	\begin{equation}\label{eq:lipest}
	T(t)u\leq S(t)u\leq I(t)u\leq T(t)u+t\phi^*\big(e^{ct}\|u\|_\Lip\big),
	\end{equation}
	which shows that the functions $I(t)u$ and $S(t)u$ are well-defined. Moreover, by similar arguments as in Remark~\ref{rem.basic} b) and  
	Theorem~\ref{thm.semigroup}, we obtain 
	$$\|I(t)u\|_\Lip\leq e^{ct}\|u\|_\Lip\quad\text{and}\quad\|S(t)u\|_\Lip\leq e^{ct} \|u\|_\Lip,$$ which shows that $I(t)u$ and $S(t)u$ are elements of $\Lip$. Note that, for all $r\geq 0$,
	  \begin{align*}
	   \sup_{\|x\|\leq r} \big|\big(T(t)u\big)(x)-u(x)\big|&\leq \|u\|_\Lip\bigg(\sup_{\|x\|\leq r}\|\psi_t(x)-x\|+\int_X \|y\|\,\mu_t(\d y)\bigg)\to 0
	  \end{align*}
         as $t\downarrow 0$. This observation together with \eqref{eq:lipest} implies for all $r\geq 0$,
         \[
	   \sup_{\|x\|\leq r} \big|\big(S(t)u\big)(x)-u(x)\big|\to 0\quad \text{as }t\downarrow 0.
	  \]
	\end{remark}
  This version of strong continuity motivates the following convergence on $\Lip$.
	\begin{notation}
	 For $(u_k)_{k\in \N}$ in $\Lip$ and $u\in \Lip$, we write $u_k \rightrightarrows u$ as $n\to \infty$ if
	  \[
	   \sup_{k\in \N}\|u_k\|_\Lip<\infty, \quad\text{and}\quad \lim_{k\to \infty}\sup_{\|x\|\leq r}\big|u(x)-u_k(x)\big|= 0\quad \text{for all }r\geq 0.
	  \]
	\end{notation}
   
         \begin{proposition}
	  Let $s\geq 0$, $(u_k)_{k\in \N}$ in $\Lip$, and $u\in \Lip$ with $u_k\rightrightarrows u$ as $k\to \infty$. Then, $S(t)u_k\rightrightarrows S(t)u$ uniformly in $t\in [0,s]$ as $k\to \infty$.
	  Moreover,
	   \[
	  S(t+s)u=S(t)S(s)u\quad \text{for all }s,t\geq 0\text{ and }u\in \Lip.
	  \]
	 \end{proposition}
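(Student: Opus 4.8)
The plan is to mirror the two pillars of the $\BUC$-theory of Section~\ref{sec.wasserstein} — the continuity estimate behind Lemma~\ref{lem.technical} and property (iii) of Theorem~\ref{thm.semigroup}, and the dyadic semigroup argument of Theorem~\ref{thm.semigroup} — now on the larger space $\Lip$. That $S(t)$ maps $\Lip$ into $\Lip$, is convex and monotone, and satisfies $\|S(t)u\|_\Lip\le e^{ct}\|u\|_\Lip$ is already recorded in Remark~\ref{rem.unbdd}, so only the two displayed assertions are at stake. The one genuinely new difficulty is that the sup-norm non-expansivity $\|I(h)u_1-I(h)u_2\|_\infty\le\|u_1-u_2\|_\infty$, used all over Section~\ref{sec.wasserstein}, is meaningless for unbounded functions; its role is taken over by the a priori sandwich $T(t)u\le S(t)u\le I^n(t)u$ of Remark~\ref{rem.unbdd} together with the quantitative refinement
\[
 I^n(t)u-T(t)u\ \le\ t\,\phi^*\!\big(e^{2ct}\|u\|_\Lip\big)\qquad\text{for all }t\ge0,\ u\in\Lip,\ n\in\N_0,
\]
which follows by telescoping the composition $I^n(t)=I(h_1)\cdots I(h_m)$ along the reference semigroup $T$, applying Remark~\ref{rem.centerpoint} to each factor $I(h_j)T(h_{j+1})\cdots T(h_m)u$ (whose Lipschitz norm is $\le e^{ct}\|u\|_\Lip$) and using that $I(h)$ fixes constants.

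First I would prove the $\Lip$-version of Lemma~\ref{lem.technical}: if $(u_k^i)_{k\in\N}\subset\Lip$, $i=1,2$, satisfy $\sup_{k,i}\|u_k^i\|_\Lip<\infty$ and $u_k^1-u_k^2\rightrightarrows0$, then there is $h_0>0$ with $I(h)u_k^1-I(h)u_k^2\rightrightarrows0$ uniformly in $h\in[0,h_0]$. Here Lemma~\ref{lem.Ib} is replaced by Lemma~\ref{lemma1} — with $L:=\sup_{k,i}\|u_k^i\|_\Lip$ the bound $\|u_k^i\|_\Lip\le L$ is available while $\|u_k^i\|_\infty$ is not — so that $I(h)u_k^1-I(h)u_k^2\le I_a(h)(u_k^1-u_k^2)$ on $[0,1]$, and the tail of $I_a(h)(u_k^1-u_k^2)$ is controlled exactly as in the proof of Lemma~\ref{lem.technical}, now invoking the linear growth $|u_k^1-u_k^2|(\xi)\le|u_k^1-u_k^2|(0)+2L\|\xi\|$, Markov's inequality, and $\sup_{h\le h_0}\Wc_p(\mu_h,\delta_0)^p\le1$ from (A3). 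Iterating over the $\le\lceil s2^n\rceil$ factors of $I^n(t)$ gives, once $2^{-n}\le h_0$, that $I^n(t)u_k^1-I^n(t)u_k^2\rightrightarrows0$ uniformly in $t\in[0,s]$. The first assertion (take $u_k^1=u_k$, $u_k^2=u$) then follows from the convexity of $S(t)$ just as in Theorem~\ref{thm.semigroup}, via
\[
 S(t)u_k-S(t)u\ \le\ \lambda\Big(I^n(t)\big(u+\tfrac{u_k-u}{\lambda}\big)-I^n(t)u\Big)+\lambda\big(I^n(t)u-S(t)u\big),
\]
where the last term is now bounded, uniformly in $t\in[0,s]$ and $n$, by $\lambda\,s\,\phi^*\big(e^{2cs}\|u\|_\Lip\big)$ — this is the point at which the sandwich replaces the $\|u\|_\infty$-term of Theorem~\ref{thm.semigroup}. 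Choosing $\lambda$ small, then $n$ with $2^{-n}\le h_0$, then $k$ large, and repeating with the roles of $u_k$ and $u$ exchanged, yields $S(t)u_k\rightrightarrows S(t)u$ uniformly in $t\in[0,s]$.

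For the semigroup identity I would first treat a dyadic $t$. As in Theorem~\ref{thm.semigroup}, for $t$ dyadic and $n$ large one has $I^n(t+s)=I^n(t)\circ I^n(s)$, hence $S(t+s)u=\lim_nI^n(t)I^n(s)u\ge S(t)S(s)u$ by monotonicity, while for each fixed $k$, $\lim_nI^n(t)I^n(s)u\le\lim_nI^n(t)I^k(s)u=S(t)\big(I^k(s)u\big)$. Since $I^k(s)u\in\Lip$ decreases pointwise to $S(s)u$ with $\sup_k\|I^k(s)u\|_\Lip\le e^{cs}\|u\|_\Lip$, writing $I^k(s)u=w_k\circ K$ and $S(s)u=w\circ K$ with $w_k,w$ Lipschitz — hence bounded and continuous on the compact set $\{Kx:\|x\|\le r\}$ — Dini's lemma gives $I^k(s)u\rightrightarrows S(s)u$; the continuity property just proved then yields $S(t)\big(I^k(s)u\big)\rightrightarrows S(t)S(s)u$, so $S(t+s)u\le S(t)S(s)u$. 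For general $t\ge0$, approximating by dyadics $t_n\to t$,
\[
 S(t+s)u=\lim_nS(t_n+s)u=\lim_nS(t_n)S(s)u=S(t)S(s)u,
\]
the outer equalities being instances of the $\rightrightarrows$-continuity of $r\mapsto S(r)v$ for fixed $v\in\Lip$. This last continuity is the step requiring the most care: at $t_0>0$ one splits $t_0=d_N+\rho_N$ with $d_N$ dyadic and $\rho_N\in(0,2^{-N}]$, uses the already established $S(t_0\pm h)v=S(d_N)S(\rho_N\pm h)v$, and sandwiches $S(\rho_N\pm h)v$ between $T(\rho_N\pm h)v$ and $T(\rho_N\pm h)v+2^{-N}\phi^*\big(e^{c}\|v\|_\Lip\big)$ by Remarks~\ref{rem.unbdd} and~\ref{rem.centerpoint}; since $T$ is strongly continuous in time and $S(d_N)$ is monotone and $\rightrightarrows$-continuous, letting $h\downarrow0$ and then $N\to\infty$ forces $S(t_0\pm h)v\rightrightarrows S(t_0)v$, which together with strong continuity at $0$ from Remark~\ref{rem.unbdd} gives continuity of $r\mapsto S(r)v$ on $[0,\infty)$ and completes the proof. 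The main obstacle throughout is thus the unboundedness of functions in $\Lip$, which disables the sup-norm contraction relied upon in Section~\ref{sec.wasserstein}; every place where that contraction was invoked has to be rerun with the $\|\cdot\|_\Lip$-quantitative sandwich displayed above.
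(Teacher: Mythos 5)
Your proof is correct, and it shares the two main ingredients of the paper's argument for the first assertion — Lemma~\ref{lemma1} b) in place of Lemma~\ref{lem.Ib}, and the Markov/linear-growth estimate in place of the sup-norm bound — but the way you pass from $I^n(t)$ to $S(t)$ and, above all, the way you obtain the semigroup identity, are genuinely different and quite a bit more laborious than what the paper does.

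For the continuity assertion, the paper simply iterates Lemma~\ref{lemma1}~b) to get $\big|S(t)u_k-S(t)u\big|\le I_a^n(t)\big|u_k-u\big|$ directly (no $\lambda$, no telescoping), and then runs precisely the Markov-type estimate you describe on $I_a(h)\big|u_k-u\big|$. The convexity trick with the auxiliary $\lambda$ and the telescoping bound $I^n(t)u-T(t)u\le t\,\phi^*\!\big(e^{2ct}\|u\|_\Lip\big)$ that you invent to replace $2\lambda\|u\|_\infty$ are correct, but they are only needed in Theorem~\ref{thm.semigroup} because there $u\in\BUC$ need not be Lipschitz; for $u\in\Lip$ with uniformly bounded $\|\cdot\|_\Lip$-norm, Lemma~\ref{lemma1}~b) already applies to the pair $(u_k,u)$, and the infima over $n$ pass through the inequality because both $I^m(t)u$ and $I_a^m(t)(u_k-u)$ are decreasing in $m$. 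So your route is a detour, though a valid one, and the telescoping estimate is a nice observation in its own right.

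For the semigroup identity the approaches diverge completely. The paper truncates: with a $1$-Lipschitz cutoff $\rho_k$ of $\|K\cdot\|$ it sets $u_k:=u\cdot\rho_k(\|K\cdot\|)\in\Lipb$, notes $u_k\rightrightarrows u$, and then writes $S(t+s)u=\lim_k S(t+s)u_k=\lim_k S(t)S(s)u_k=S(t)S(s)u$, invoking Theorem~\ref{thm.semigroup} on $\Lipb$ and the continuity just proved — a three-line argument. You instead re-run the entire dyadic argument on $\Lip$ (Dini on $\{Kx:\|x\|\le r\}$, which requires observing that every $u\in\Lip$ factors as $w\circ K$ with $w$ Lipschitz) and then still have to supply a separate proof that $r\mapsto S(r)v$ is $\rightrightarrows$-continuous in order to pass from dyadic to arbitrary $t$. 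That last step, via the sandwich $T(\rho_N\pm h)v\le S(\rho_N\pm h)v\le T(\rho_N\pm h)v+O(2^{-N})$ and the dyadic splitting $t_0=d_N+\rho_N$, is the most delicate part of your plan and needs more care than you give it (in particular you quietly rely on $\rightrightarrows$-continuity of $T(\cdot)v$ in the time variable on $\Lip$, and on $\rightrightarrows$-continuity of the operator $T(r)$ itself, neither of which is free). It can be made to work, and it does have the advantage of being self-contained and of avoiding the truncation step entirely — a step which, incidentally, requires a slightly more careful choice of cutoff than the paper's $1$-Lipschitz $\rho_k$ if one wants $\sup_k\|u_k\|_\Lip<\infty$ (slopes of order $1/k$ rather than $1$). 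But overall the truncation argument is markedly shorter, and I would encourage you to notice that once the first assertion is available on $\Lip$, the second follows almost for free from the result already proved on $\Lipb\subset\BUC$.
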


	\begin{proof}
	 Let $s\ge 0$. Similar to Lemma~\ref{lemma1} and the argumentation in Theorem~\ref{thm.semigroup}, since $L:=\sup_{k\in \N}\|u_k\|_\Lip<\infty$, there exists $a\ge 0$ such that 
	 \[
	  |S(t)u_k-S(t)u|\leq I_a^n(t)|u_k-u|\quad \text{for all }t\in [0,s]\text{ and }k,n\in \N.
	 \]
        Therefore, it is sufficient to consider the case, where $u=0$ and $u_k\geq 0$ for all $k\in \N$, and it remains to show that $I_a(h)u_k\rightrightarrows 0$ as $k\to \infty$ uniformly in $[0,h_0]$ with $h_0>0$ sufficiently small. Let $h_0>0$ sufficiently small such that $$\sup_{h\in [0,h_0]}\|\psi_h(0)\|<\infty\quad \text{and}\quad \sup_{h\in [0,h_0]}\int_X \|y\|^p\, \mu_h(\d y)\leq 1,$$
        where we used our global assumptions (A2) and (A3). Then, by Assumption (A1),
        \begin{equation}\label{eq.lipconv0}
         M_r:=\sup_{h\in [0,h_0]}\sup_{\|x\|\leq r}\|\psi_h(x)\|\leq \sup_{h\in [0,h_0]}\|\psi_h(0)\|+e^{ch_0}r<\infty \quad \text{for all }r\geq 0.
        \end{equation}
        Let $\ep>0$ and $M>0$ with $LM^{1-p}(1+ah_0)^p<\ep$. Then,
        \begin{align*}
         0&\leq \sup_{h\in [0,h_0]}\sup_{\|x\|\leq r}\big(I_a(h)u_k\big)(x)\\
         &\leq  \sup_{\|\xi\|\leq M+M_r} |u_k(\xi)| +\sup_{h\in [0,h_0]}\sup_{\|x\|\leq r}\sup_{\Wc_p(\mu_h,\nu)\leq ah}\int_{\{\|z\|>M\}}\big(u_k(\psi_h(x))+L\|z\|\big)\,\nu(\d z)\\
         &\leq 2\sup_{\|\xi\|\leq M+M_r} |u_k(\xi)| + \sup_{h\in [0,h_0]}\sup_{\Wc_p(\mu_h,\nu)\leq ah} LM^{1-p}\int_{\{\|z\|>M\}}\|z\|^p\,\nu(\d z)\\
         & \leq 2\sup_{\|\xi\|\leq M+M_r} |u_k(\xi)| + LM^{1-p}\sup_{h\in [0,h_0]}\big(\Wc_p(\mu_h,\delta_0)+ah\big)^p\\
         &\leq 2\sup_{\|\xi\|\leq M+M_r} |u_k(\xi)| + LM^{1-p}(1+ah_0)^p < \ep
        \end{align*}
        for $k\in \N$ sufficiently large.
        
        Finally, we show the semigroup property. To do so, for all $k\in \N$, let $\rho_k\colon \R\to \R$ be $1$-Lipschitz with $0\leq \rho_k\leq 1$, $\rho_k(w)=1$ for all $w\in \R$ with $|w|\leq k$, and $\rho_k(w)=0$ for all $w\in \R$ with $|w|\geq k+1$. For $u\in \Lip$, let $u_k(x):=u(x)\rho_k(\|Kx\|)$ for all $x\in X$ and $k\in \N$, so that $(u_k)_{k\in\N}\subset \Lip_b$
        and  $u_k\rightrightarrows u$ as $k\to \infty$. Hence, it follows from the first part and Theorem~\ref{thm.semigroup} that 
        \[
        S(t+s)u=\lim_{k\to \infty} S(t+s)u_k=\lim_{k\to \infty} S(t)S(s)u_k=S(t)S(s)u.\qedhere
        \]
	\end{proof}

	\subsection{Sensitivity analysis for the robust semigroup}\label{sec.sensitivity}
	The sensitivity of functionals depending on the distribution $\mu$ of a random source $X$ with respect to ``small'' nonparametric perturbations of the distribution $\mu$ has recently received a lot of attention.\ In this context, sensitivity is typically understood as a suitable derivative w.r.t.\ the degree of uncertainty expressed in terms of a transport distance, typically, a Wasserstein distance, cf.\ Bartl et al.\ \cite{bartl2020robust}. In this subsection, we address this issue in the context of Wasserstein perturbed semigroups.\ We show that the generator of the semigroup can, in certain situations, be understood as a sensitivity estimate for infinitesimally small amounts of model uncertainty. On the other hand, we address the numerical implementation of the semigroup $S$ using, e.g., Monte-Carlo methods together with the obtained sensitivity bounds.

	Note that the simulation of the semigroup $T$ is of a very simple nature due to our structural assumption on the kernels $(p_t)_{t\geq 0}$. In order to compute a realisation of $\Xi_t^x$ for some $t\geq 0$ and \textit{all} $x\in X$, it is sufficient to perform \textit{one} Monte-Carlo simulation for the random variable $Y_t$ with law $\mu_t$ independent of $x\in X$. In a second step, one simulates $\Xi_t^x$ by computing the sum of the deterministic value $\psi_t(x)$ and the simulated random variable $Y_t$ for all $x\in X$. 
	
	Recall that $T(t)u\leq S(t)u\leq I(t)u$ for all $u\in \Lip$.\ Combined with (\ref{eq.difference}), we have seen that
	\begin{equation}\label{eq.sensitivity1}
		S(t)u-T(t)u\leq I(t)u-T(t)u\leq t\phi^*\big(e^{c t}\|u\|_\Lip\big)\quad \text{for all }u\in \Lip.
	\end{equation}
	Although this estimate is very rough at points where the function $u$ is (almost) constant, it is very attractive due to its simple nature and delivers a reliable estimate for $S$ and $I$ in terms of $T$ that scales linearly in time. Using the modified version of \eqref{eq.sensitivity1} given in Theorem \ref{thm.semigroup}, for all $r\geq 0$,
	\begin{equation}\label{eq.sens0}
	 \sup_{\|x\|\leq r} \bigg|\frac{\big(S(h)u\big)(x)-\big(T(h)u\big)(x)}{h}-\phi^*\big(\|u'(x)\|\big)\bigg|\to 0\quad \text{as }h\downarrow 0,
	\end{equation}
	we also take into account the local behaviour of the function $u\in \BUC^1$. Note that this is a tighter but asymptotic bound for infinitesimally small times $h\geq 0$ that still scales linearly in time.
	
	Implicitly, we have also derived sensitivity bounds for the influence of Wasserstein perturbations around the reference semigroup $T$ in terms of the penalisation $\phi^*$. Consider the case, where $\phi= \infty\cdot \eins_{(a,\infty)}$ with some $a\geq 0$. Note that, for $a=0$, $S=T$. For $a>0$, the operator $I$ is given by
	\[
	 \big(I(t)u\big)(x)=\sup_{\Wc_p(\mu_t,\nu)\leq at} \int_X u\big(\psi_t(x)+z\big)\, \nu(\d z)\quad \text{for }t\geq 0,\, u\in \Lip,\text{ and }x\in X.
	\]
	In this case, the derivative at $t=0$, \eqref{eq.difference1}, reformulates to
	\begin{equation}\label{eq.sens1}
	  \sup_{\|x\|\leq r}\bigg|\frac{\big(I(h)u\big)(x)-\big(T(h)u\big)(x)}{t}-a\|u'(x)\|\bigg|\to 0\quad \text{as }h\downarrow 0,
	\end{equation}
        for $u\in \BUC^1$, and provides a sensitivity estimate for the influence of nonparametric model uncertainty in terms of Wasserstein balls around the baseline model given in terms of the kernels $(p_t)_{t\geq 0}$.\ Moreover, \eqref{eq.sensitivity1} leads to the following version of \eqref{eq.sens1}:
        \[
         \|I(t)u-T(t)u\|_\infty\leq ta\|u\|_\Lip\quad \text{for all }t\geq 0\text{ and }u\in \Lip.
        \]
%The fact that $v(t,x)=\big(S(t)u\big)(x)$, for $t\geq0$, $x\in X$, and $u\in \BUC^1$, defines a viscosity solution to the nonlinear PDE
%\begin{equation}
%	\partial_t v(t,x)=Bv (t,x)+a\|D_xv(t,x)\|\quad \text{for }t\geq 0\text{ and }x\in X,
%\end{equation}
%where $B$ is the generator of $T$ and $D_x$ is the Fr\'echet derivative in the space variable, provides tools for the numerical computation of $S(t)u$ also for large times $t\geq 0$.

         \subsection{Relation to parametric uncertainty and dynamic risk measures}\label{sec:param}
           Closely related to the topic of sensitivity analysis that we discussed in the previous subsection, a lot of interest has recently been paid to reducing optimisation procedures under nonparametric uncertainty to low-dimensional optimisation problems, cf.\ Bartl et al.~\cite{bartl2020computational}, Mohajerin Esfahani and Kuhn~\cite{esfahani2018data}, Zhao and Guan~\cite{MR3771309}, and Blanchet and Murthy~\cite{blanchet2019quantifying}.\ In this subsection, we show how, in special yet relevant cases, the nonparametric uncertainty in the semigroup $S$ reduces to a simple form of parametric uncertainty. This leads to a low-dimensional optimisation scheme instead of the infinite-dimensional optimisation problem related to $I(t)$. More precisely, we discuss the relation between the Wasserstein perturbed semigroup $S$ and the Nisio semigroup, cf.\ \cite{MR0451420}, w.r.t.\ drift uncertainty of the related stochastic process $(\Xi_t^x)$.

         Throughout this subsection, we work under the assumption that $\psi_t(x)=x$ for all $t\geq 0$ and $x\in X$. For $t\geq 0$, $u\in \BUC$, and $x\in X$, we consider
         \begin{equation}\label{eq.defet}
          \big(E(t)u\big)(x):=\sup_{\theta\in X}\bigg(\int_X u\big(x+y+t\theta\big)\, \mu_t(\d y)-t\phi \big(e^{-c t}\|\theta\|\big)\bigg).
         \end{equation}
         Since $\Wc_p(\mu_t,\mu_t\ast \delta_{t\theta})\leq t\|\theta\|$ for all $t\geq 0$ and $\theta\in X$, it follows that
         \begin{equation}\label{eq:Et}
          T(t)u\leq E(t)u\leq I(t)u\quad \text{for all }t\geq 0.
         \end{equation}
         Moreover, one readily verifies that $E(t)\colon \Lipb\to \Lipb$ with $$\|E(t)u\|_\Lip\leq e^{c t} \|u\|_\Lip\quad\text{and}\quad\|E(t)u\|_\infty\leq \|u\|_\infty\quad \text{for all }u\in \Lipb.$$ Similar as in the proof of Lemma~\ref{lemma1}, one sees that also $E(h)\colon \BUC\to \BUC$ is well-defined and $1$-Lipschitz for sufficiently small $h\geq 0$. For $u\in \BUC$ and $\pi=\{t_0,\ldots, t_m\}\in P_t$ with $m\in \N$ and $0=t_0<\ldots <t_m=t$ (see Remark \ref{rem:Ipi}), we define
         \[
          E(\pi)u:=E(t_1-t_0)\cdots E(t_m-t_{m-1})u.
         \]
         Let $u\in \BUC$. Then, \eqref{eq:Et} implies that
         \begin{equation}\label{eq.param0}
          E(\pi)u\leq I(\pi)u.
         \end{equation}
         For all $n\in \N$, we consider the partition $\pi_n:=\{j2^{-n}\colon j=0,\ldots k\}\cup \{t\}\in P_t$ with $k\in \N_0$ being the largest natural number such that $k2^{-n}\leq t$. Then, $$E(t)u\leq E(\pi_n)u\leq E(\pi_{n+1})u\quad\text{and}\quad  I(\pi_n)u=I^n(t)u$$
         for all $n\in \N$. Hence, using \eqref{eq.param0},
         \[
          E(t)u\leq \sup_{n\in \N}E(\pi_n)u=\lim_{n\to \infty}E(\pi_n)u\leq \lim_{n\to \infty}I^n(t)u=S(t)u.
         \]
          Now, let $\pi=\{t_0,\ldots, t_m\}\in P_t$ with $m\in \N$ and $0=t_0<\ldots <t_m$. Then,
         \[
          E(\pi)u= E(t_1-t_0)\cdots E(t_m-t_{m-1})u\leq S(t_1-t_0)\cdots S(t_m-t_{m-1})u=S(t)u
         \]
         This shows that
         \[
          N(t)u:=\sup_{\pi\in P_t}E(\pi)u\leq S(t)u
         \]
         for all $t\geq 0$ and $u\in \BUC$.\ Proceeding as in Nendel and R\"ockner \cite{roecknen}, one sees that $N=\big(N(t)\big)_{t\geq 0}$ is a strongly continuous convex transition semigroup, where the strong continuity (property (iii) in Definition \ref{def.semigroup}) follows from $T(t)u\leq N(t)u\leq S(t)u$ for all $u\in \BUC$.\\
         Let $u\in D(B)\cap \BUC^1$ and $\ep>0$. It follows from the proof of Lemma~\ref{lem.keygenerator} that, for sufficiently small $h>0$,
         \[
          \phi^*\big(\|u'(x)\|\big)-\ep\leq \frac{\big(E(h)u\big)(x)-\big(T(h)u\big)(x)}{h}.
         \]
         Since $E(t)u\leq N(t)u\leq S(t)u\leq I(t)u$, for $t\geq 0$, it follows that
         \[
          \bigg\|\frac{N(t)u-u}{u}-A u\bigg\|_\infty\to 0\quad \text{as }t\downarrow 0,
         \]
         showing that the generator of $S$ and $N$ coincide on $D(B)\cap \BUC^1$. Similar as in Section~\ref{sec.viscosity}, one sees that the map $t\mapsto N(t)u_0$, for $u_0\in \BUC$, is a viscosity solution to the abstract PDE $$\partial_t u=Au\quad \text{with}\quad u(0)=u_0.$$
         In certain cases, e.g., if $X=\R$ and $B=\frac{\sigma^2}{2}\partial_{xx}$ (see Section \ref{ex.levy}) or if $X=(0,\infty)$ and $B=\frac{\sigma^2 x^2}{2}\partial_{xx}$ (see Section \ref{ex.geombm}), it follows from standard comparison results for viscosity solutions to HJB equations, cf.\ Crandall et al.\ \cite{MR1118699}, that
         \begin{equation}\label{eq.param}
          S(t)u=N(t)u\quad \text{for all }t\geq 0\text{ and }u\in \BUC.
         \end{equation}
          Approximating $u\in \Lip$ with $(u_n)_{n\in \N}$ in $\BUC$ as in Section \ref{sec.unbdd}, it follows that \eqref{eq.param} holds for all $u\in \Lip$. We therefore see that the nonparametric uncertainty captured by $I(t)$ collapses to pure drift uncertainty as $t\downarrow 0$. 
          \begin{remark}
         	We discuss the link to dynamic risk measures. For $u\in \Cb$, the value $\big(I(t)u\big)(x)$ in \eqref{eq.isometry} can be viewed as the conditional risk in terms of a capital requirement 
         	of the position $u(\Xi_t^x)$ under a convex risk measure conditioned on $\{\Xi_0^x=x\}$.\footnote{Instead of $I(t)$, one could even consider the entropic version $\big(\tilde I(t)u\big)(x):=\big(\log I(t)e^u\big)(x)$, where the position $u(\Xi_t^x)$ is weighted with the exponential loss function.}
         	In this context, for $t=k2^{-n}$, we interpret $I^n(t)u=I(2^{-n})^ku$ in \eqref{eq.iteratedyadic} as the $k$-times composition of the conditional risk measures $I(2^{-n})$, which is a time-consistent dynamic risk measure at the discrete time points $\{0,2^{-n},\dots,k2^{-n}\}$. 
         	By making the partition progressively finer, we obtain, in the limit, a dynamic risk measure in continuous time. Here, $\big(S(t)u\big)(x)$ is the dynamic risk of the position $u(\Xi_t^x)$ conditioned on $\{\Xi_0^x=x\}$, and the time-consistency is ensured by the semigroup property. As a result of the Lemma~\ref{lem.monotonicity}, we obtain that finer partitions lead to smaller capital requirements, i.e., $\big(I^{n+1} u\big)(x)\le \big(I^{n} u\big)(x)$ for all $n\in\N$. % In case that the one-step conditional risk measures incorperate the non-parametric Wasserstein uncertainty modeled by $I(2^{-n})$,
         	The previous discussion leads to the following insight: although we start with a family of risk measures $(I^n(t))_{n\in \N}$ that bear nonparametric uncertainty, in the limit, we obtain a dynamic risk measure $S(t)$ that incorprates only parametric drift-uncertainty. In the particular case of a Brownian filtration, the limiting dynamic risk measures are represented as $g$-expectations, see Rosazza Gianin~\cite{MR2241848}.
         	
         	Moreover, due to Lemma~\ref{lem.monotonicity}, we obtain that the limiting dynamic risk measure is dominated by the one-step risk measure, i.e., $S(t)u\le I(t)u$ for all $u\in\Cb$ and $t\ge 0$. In particular, the inequality in \eqref{eq.sensitivity1} allows to simulate bounds for  dynamic risk measures ($g$-expectations) in terms of the reference dynamics and the convex conjugate $\varphi^*$. In the case $X=\R$ with $\psi_t(x)=x$ for all $x\in \R$, $\mu_t=\mathcal{N}(0,t)$, and $\phi(v)=\tfrac{v^2}{2}$, we obtain the entropic risk measure
         	\[
         	\big(S(t)u\big)(x)=\log\int_{\R}\exp\big(u(x+y)\big)\,\mu_t(\d y),
         	\]
         	which is dominated by the respective one-step conditional risk measure $I(t)u$, see also Blessing et al.~\cite[Corollary 5.12]{blessing2022convex} for a discussion on the corresponding Talagrand $T_2$ inequality.
         \end{remark}

	\subsection{Martingale constraints}\label{rem.martingaleconstraint}
	Motivated by risk-neutral pricing, we consider the case where $\int_Xy\, \mu_t(\d y)=0$ for all $t\geq 0$. In line with Bartl et al.~\cite[Section 3.1]{bartl2020robust}, one can add a martingale constraint to the operator $I(t)$, for $t\geq 0$, by considering
	\[
	\big(I_{\rm Mart}(t)u\big)(x):=\sup_{\nu\in \Pc_p^0(X)}\bigg(\int_Xu\big(\psi_t(x)+z\big)\, \nu(\d z)-\phi_t\big(\Wc_p(\mu_t,\nu)\big)\bigg)
	\]
	for all $t\geq 0$, $u\in \BUC$, and $x\in X$, where $\Pc_p^0(X)$ denotes the set of all $\nu\in \Pc_p(X)$ with $\int_Xz\, \nu(\d z)=0$. The proof of Lemma~\ref{lem.keygenerator} shows that, for $u\in \BUC^1$,
	\[
	\frac{I_{\rm Mart}(h)u-T(h)u}{h}\to 0\quad \text{as }h\downarrow 0.
	\]
	In fact, for all $h\in [0,1]$ and $x\in X$, let $\nu_h^x \in \Pc_p^0(X)$ with $\Wc_p(\mu_h,\nu_h^x)\leq bh^\alpha$ and 
	\begin{equation}\label{eq.keygenerator4}
		\big(I_{\rm Mart}(h)u\big)(x)\leq \ep h+\int_X u\big(\psi_h(x)+z\big)\, \nu_h^x(\d z)-\phi_h\big(\Wc_p(\mu_h,\nu_h^x)\big).
	\end{equation}
	For each $h\in [0,1]$ and $x\in X$, let $\pi_h^x\in \Pc_p(X\times X)$ be an optimal coupling between $\mu_h$ and $\nu_h^x$. Then, using the fundamental theorem of infinitesimal calculus and the fact that $\nu_h^x\in \Pc_p^0(X)$, we obtain that
	\begin{align*}
		&\frac{\big(I_{\rm Mart}(h)u\big)(x)-\big(T(h)u\big)(x)}{h}\\
		&\leq  \frac{1}{h}\int_{X\times X}\int_0^1u'\big(\psi_h(x)+(1-s)y+sz\big)(z-y)\, \d s\, {\pi_h^x}(\d y,\d z)-\frac{\phi_h\big(\Wc_p(\mu_h,\nu_h^x)\big)}{h}+\ep\\
		&=  \frac{1}{h}\int_{X\times X}\int_0^1\Big(u'\big(\psi_h(x)+(1-s)y+sz\big)-u'\big(\psi_h(x)\big)\Big)(z-y)\, \d s\, {\pi_h^x}(\d y,\d z)\\
		&\quad\quad-\phi\bigg(e^{-c h}\frac{\Wc_p(\mu_h,\nu_h^x)}{h}\bigg)+\ep\\
		& \leq \bigg(\int_{X\times X}\int_0^1\big\|u'\big(\psi_h(x)+(1-s)y+sz\big)-u'\big(\psi_h(x)\big)\big\|^q \,\d s \, \pi_h^x(\d y,\d z)\bigg)^{1/q}\\
		&\quad\quad\; \times \frac{\Wc_p(\mu_h,\nu_h^x)}{h}-\phi\bigg(e^{-c h}\frac{\Wc_p(\mu_h,\nu_h^x)}{h}\bigg)+\ep\\
		& \leq \phi^*\Bigg(e^{c h}\bigg(\int_{X\times X}\int_0^1\big\|u'\big(\psi_h(x)+(1-s)y+sz\big)-u'\big(\psi_h(x)\big)\big\|^q \,\d s\, \pi_h^x(\d y,\d z)\bigg)^{1/q}\Bigg)\\
		&\quad\quad +\ep
	\end{align*}
	for all $h\in [0,1]$ and $x\in X$, where $q=\frac{p}{p-1}$ is the conjugate exponent of $p$. Proceeding as in the proof of Lemma~\ref{lem.keygenerator}, we find that
	\[
	\sup_{\|x\|\leq r}\int_{X\times X}\int_0^1\big\|u'\big(\psi_h(x)+(1-s)y+sz\big)-u'\big(\psi_h(x)\big)\big\|^q \,\d s\, \pi_h^x(\d y,\d z)\to 0\quad \text{as }h\downarrow 0.
	\]
	In particular,
	\[
	\sup_{\|x\|\leq r}\frac{\big(I_{\rm Mart}(h)u\big)(x)-\big(T(h)u\big)(x)}{h}\to 0\quad \text{as }h\downarrow 0.
	\]
	Similar as in Section \ref{sec.wasserstein}, the family $\big(I_{\rm Mart}(t)\big)_{t\geq 0}$ gives rise to a semigroup $\big(S_{\rm Mart}(t)\big)_{t\geq 0}$. With an analogous argumentation as in the proof of Theorem \ref{thm.semigroup}, one sees that
		\[
		%	I_{\rm Mart}(t)I_{\rm Mart}(s)u\leq I_{\rm Mart}(t+s)u
		\frac{S_{\rm Mart}(h)u-u}{h}\to B u\quad \text{as }h\downarrow 0.
		\]
	Using comparison results as in the previous section, we find that, in certain cases, e.g., if $X=\R$ and $B=\frac{\sigma^2}{2}\partial_{xx}$ or if $X=(0,\infty)$ and $B=\frac{\sigma^2 x^2}{2}\partial_{xx}$,
	\begin{equation}\label{eq.param}
		S_{\rm Mart}(t)u=T(t)u\quad \text{for all }t\geq 0\text{ and }u\in \BUC.
	\end{equation}
	%cf.\ Crandall et al.\ \cite{MR1118699}. 
	In particular, under our assumption that the level of uncertainty is proportional to the time interval, we see that $S_{\rm Mart}$ as a limit $I_{\rm Mart}(\pi)$ over progressively finer time partitions $\pi$ coincides with the transition semigroup $T$ of the reference Markov process.  
	
%	With an analogous argumentation as in the proof of Lemma \ref{lem.monotonicity}, one sees that
%	\[
%	%	I_{\rm Mart}(t)I_{\rm Mart}(s)u\leq I_{\rm Mart}(t+s)u
%	\frac{S_{\rm Mart}(h)u-u}{h}\to B u\quad \text{as }h\downarrow 0
%	\]
%	for all $s,t\geq 0$ and $u\in \BUC^1$, so that $S_{\rm Mart}(t)u=T(t)u$.

	\section{Examples}\label{sec.examples}
	\subsection{Koopman semigroups}\label{sec.koop}
	Let $X=\R^d$ with $d\in \N$. In this example, we consider the case of a deterministic drift that might be susceptible to an uncertain random noise. More precisely, for a fixed Lipschitz continuous function $F\colon X\to X$ with Lipschitz constant $c\geq 0$, we consider the initial value problem
	\begin{align}
		\label{eq.odekoop} \partial_t x(t)&=F\big(x(t)\big)\quad \text{for all }t\in \R,\\
		\notag x(0)&=x\in X.
	\end{align}
	For $x\in X$, we define $\big(\psi_t(x)\big)_{t\geq 0}$ as the unique solution to the above initial value problem. As a reference model, we choose the deterministic dynamics
	\[
	 \Xi_t^x=\psi_t(x)\quad \text{for }t\geq 0\text{ and }x\in X.
	\]
        We assume that a random noise $Z$ with uncertain distribution $\nu\in \Pc_p(X)$ enters in an additive way leading to the uncertain stochastic dynamics
        \[
         \psi_t(x)+Z \quad \text{for }t\geq 0\text{ and }x\in X,
        \]
        where $Z$ can be seen as a generalisation of a known random source, such as a Brownian motion. In the setup of our previous discussion, we thus consider the flow $\mu_t=\delta_0$, for $t\geq 0$, where $\delta_0$ is the Dirac measure with barycenter $0$. Assumption (A3) is therefore trivially satisfied for all $p\geq 1$. The consistency condition \eqref{eq.consistency} is exactly the flow property of solutions to the ODE \eqref{eq.odekoop}. Using Gronwall's lemma,
	\begin{equation}\label{eq.koop1}
		\|\psi_t(x_1)-\psi_t(x_2)\|\leq e^{ct}\|x_1-x_2\|\quad  \text{for all }t\geq 0\text{ and }x_1,x_2\in X,
	\end{equation}
	which shows that Assumption (A1) is satisfied.\ Another application of Gronwall's lemma shows that
	\begin{equation}\label{eq.koop2}
		\|\psi_h(x)-x\|\leq he^{ch} \|F(x)\|\leq he^{ch}\big(\|F(0)\|+c\|x\|\big)
	\end{equation}
	for all $h\geq 0$ and $x\in X$. In particular, for every $r\geq 0$,
	\[
	 \sup_{\|x\|\leq r}\|\psi_h(x)-x\|\leq he^{ch}\big(\|F(0)\|+cr\big)\to 0\quad \text{as }h\downarrow 0.
	\]
	Moreover, the estimate in \eqref{eq.koop2} together with the inverse triangular inequality implies that
	\[
	\|x\|-\|\psi_h(x)\|\leq 	\|\psi_h(x)-x\|\leq he^{ch}\big(\|F(0)\|+c\|x\|\big).
	\]
	Hence, for $h_0>0$ with $ch_0e^{ch_0}<1$,
	\[ \inf_{h \in [0,h_0]} \|\psi_{h}(x)\|\geq \|x\|(1-ch_0e^{ch_0})-h_0e^{ch_0}\|F(0)\|\to \infty\quad\text{as }\|x\|\to\infty,\]
%         From \eqref{eq.koop1} and \eqref{eq.koop2}, it follows that
%	\begin{align*}
%		\|x\|&=\big\|\psi_{-h}\big(\psi_h(x)\big)-\psi_{-h}\big(\psi_h(0)\big)\big\|\leq e^{ch}\|\psi_h(x)-\psi_h(0)\|\\
%		&\leq e^{ch}\|\psi_h(x)\|+he^{2ch}\|F(0)\|,
%	\end{align*}
%	which implies that $\|\psi_h(x)\|\geq e^{-Lh}\|x\|-he^{Lh}\|F(0)\|$ for all $h\geq 0$ and $x\in X$. Hence, for every $ h_0 \geq 0 $, 
%	\[ \inf_{h \in [0,h_0]} \|\psi_{h}(x)\|\to \infty\quad\text{as }\|x\|\to\infty, \]
	which shows that Assumption (A2) is satisfied as well. For $t\geq0$, $x\in X$, and $u\in \BUC$, we have
	\[ \big(I(t)u\big)(x) := \sup_{\nu \in \Pc_{p}(X)}\Bigg(\int_{X}u\big(\psi_{t}(x)+z\big)\,\nu(\d z)-\phi_t\Bigg(\bigg(\int_X \|z\|^p\, \nu(\d z)\bigg)^{1/p}\bigg)\Bigg). \]
	Then, the semigroup $S=\big(S(t)\big)_{t\geq 0}$ can be seen as transition kernels of solutions to the ODE \eqref{eq.odekoop} with an additive robust noise.\ The related Kolmogorov equation is a nonlinear PDE, and given by
	\begin{equation}\label{eq.pdekoop}
	\partial_t u(t,x)=\big(D_xu(t,x)\big)\big(F(x)\big)+\phi^*\big(\|D_xu(t,x)\|\big),
	\end{equation}
	where $D_x$ denotes the Fr\'echet derivative in the space variable. Observe that, in contrast to an additive noise in terms of a Brownian motion, no second-order terms appear in  Equation \eqref{eq.pdekoop}.%\ This phenomenon is due to the linear scaling in the penalisation function.
	
	\subsection{Semiflows arising from linear dynamics}\label{sec.lindyn}
	Let $ X $ be a separable Banach space, $m\in X$, and $G\colon D(G)\subset X\to X$ be the generator of a $C_0$-semigroup $P=\big(P(t)\big)_{t\geq 0}$ on $X$ with compact resolvent.\footnote{Recall that a generator $G$ of a $C_0$-semigroup on $X$ has compact resolvent if $(\lambda-G)^{-1}\colon X\to X$ is a compact operator for some (or equivalently all) $\lambda>\omega$, where $\omega\in \R$ is the growth bound of the related semigroup.} A typical example for such a generator $G$ is given by the Dirichlet Laplacian in $L^q(\Omega)$, where $\Omega\subset \R^d$ is a bounded domain and $1\leq q<\infty$, or any generator of a compact $C_0$-semigroup, cf.\ \cite[Section 2.3]{pazy2012semigroups}. For the Dirichlet Laplacian, the compactness of the resolvent follows from the Rellich-Kondrachov theorem.\ For $t\geq 0$ and $x\in X$, let $$\psi_t(x):=P(t)x$$ be the unique solution to the abstract Cauchy problem
	\[
	 \partial_t x(t)= Gx(t)\quad \text{for }t\geq 0,\quad x(0)=x.
	\]
	Since $P$ is a $C_0$-semigroup on $X$, there exist constants $M\geq 0$ and $\omega\geq 0$ such that $\|P(t)x\|\leq Me^{\omega t}\|x\|$ for all $x\in X$ and $t\geq 0$. By passing to the equivalent norm $$\|x\|_P:=\sup_{t\geq 0}e^{-\omega t}\|P(t)x\|\quad \text{for }x\in X,$$
	we may, w.l.o.g., assume that $M=1$. Since $\psi_t(x_1)-\psi_t(x_2)=P(t)(x_1-x_2)$ for all $x_1,x_2\in X$, it follows that Assumption (A1) is satisfied with $c:=\max\{\omega,0\}$. Choosing $K:=(\lambda-G)^{-1}$ with $\lambda:=1+\omega$, it follows that $K\psi_t(x)=\psi_t(K x)$ for all $x\in X$ and $t\geq 0$, since the resolvent commutes with the semigroup.\ Moreover, the resolvent identity\footnote{Note that $Kx\in D(G)$.} $GKx=\lambda Kx-x$ together with $\|K x\|\leq \frac{1}{\lambda-\omega}=1$ for all $x\in X$ yields
	\[
	 \sup_{\|x\|\leq r}\|\psi_h(Kx)-Kx\|\leq h e^{\omega h}\sup_{\|x\|\leq r}\|GKx\|\leq he^{\omega h}(2+\omega)r\to 0
	\]
        as $h\downarrow 0$ for all $r\geq 0$. We choose $Y_t:=\int_0^t P(s)m\,{\rm d}s$ for $t\geq 0$, so that the assumptions (A1), (A2), and (A3) are satisfied. Then, the nonlinear transition semigroup yields a solution to the nonlinear PDE
	\begin{equation}\label{eq.pdelindyn}
	\partial_t u(t,x)= \big(D_xu(t,x)\big)(Gx+m)+\phi^*\big(\|D_xu(t,x)\|\big).
	\end{equation}

	\subsection{Lévy processes}\label{ex.levy}
	In this example, we consider the case, where $(\Xi_t^x)$ is a L\'evy process taking values in a separable Hilbert space $X$. For a detailed discussion in the finite-dimensional case, we refer to Bartl et al.\ \cite{bartl2020limits}. Throughout this example, we consider $ \psi_{t}(x) = x $ for all $t\geq 0$ and $x\in X$. Since $\psi_t$ is the identity for all $t\geq 0$, the compact operator $K$ can be chosen arbitrarily. Furthermore, let $\mu=(\mu_{t})_{t\geq 0}$ in $\Pc_p(X)$ be a family of infinitely divisible distributions with
	\[
	\lim_{h\downarrow 0}\int_X\|y\|^p\, \mu_h(\d y)=0,
	\]
	so that Assumption (A3) holds. A typical example for $\mu$ is the distribution of a Brownian motion with trace class covariance operator. Since $\psi_t$ is the identity for all $t\geq0$, the Assumptions (A1) and (A2) are trivially satisfied. In this case, for $t\geq 0$, $x\in X$, and $u\in \BUC$, the operator $I(t)$ is given by
	\[ (I(t)u)(x) = \sup_{\nu \in \Pc_{p}(X)} \bigg(\int_{X} u(x+z)\,\nu(\d z) - \phi_{t}\big(\Wc_{p}(\mu_t,\nu)\big)\bigg). \]
	Let $B_\mu$ be the generator of the L\'evy process related to the family $\mu$. Then, the nonlinear semigroup $\big(S(t)\big)_{t\geq 0}$ can be computed by solving the PDE
	\begin{equation}\label{eq.pdelevy1}
	\partial_t u(t,x)=B_\mu u(t,x)+\phi^*\big(\|D_xu(t,x)\|\big).
	\end{equation}
	As an illustration, we consider the case, where $H=\R$, $\mu$ is the law of a Brownian motion (starting in $0$) with standard deviation $\sigma>0$, and $\phi=\infty\cdot \eins_{(a,\infty)}$ with $a\geq 0$. %In the context of Mathematical Finance, this corresponds to a Bachelier model with a Wasserstein perturbation in the law of the underlying Brownian motion.
	In this case, $B_\mu=\frac{\sigma^2}{2}\partial_{xx}$, $\phi^*=a|\cdot |$, and the PDE \eqref{eq.pdelevy1} simplifies to
	\begin{equation}\label{eq.pdelevy2}
	\partial_t u(t,x)=\frac{\sigma^2}{2} \partial_{xx}u(t,x)+a |\partial_xu(t,x)|.
	\end{equation}
	\subsection{Ornstein-Uhlenbeck processes}
	We consider a separable Hilbert space $ X $. Let $G\colon D(G)\subset X\to X$ be the generator of a $C_0$-semigroup $P=\big(P(t)\big)_{t\geq 0}$ on $X$ with compact resolvent, and $m\in X$. We modify the approach in Example \ref{sec.lindyn} by adding a noise in terms of a Brownian motion $(W_t)_{t\geq 0}$ taking values in $X$ with a trace class covariance operator $C\colon X\to X$ on some filtered probability space, satisfying the usual assumptions. For $t\geq 0$ and $x\in X$, let $\psi_t(x):=P(t)x$ be the unique solution to the abstract Cauchy problem
	\[
	 \partial_t x(t)= Gx(t),\quad \text{for }t\geq 0,\quad x(0)=x.
	\]
	Choosing $K:=(\lambda-G)^{-1}$ with $\lambda>0$ sufficiently large, we have seen in Example \ref{sec.lindyn} that the Assumptions (A1) and (A2) are satisfied. Then, we consider the Ornstein-Uhlenbeck process $(\Xi_t^x)$ given by
	\[
	 \Xi_t^x=\psi_t(x)+\int_0^t P(s)m\,{\rm d}s+\int_0^t P(t-s)\, \d W_s,\quad \text{for }t\geq 0\text{ and }x\in X.
	\]
        In this case, $\mu_t$ is the law of the stochastic convolution $\int_0^t P(s)m\,{\rm d}s+ \int_0^t P(t-s)\, \d W_s$. By definition, the process $(\Xi_t^x)$ is a mild solution to the SPDE
        \[
         \d \Xi_t^x=(G\Xi_t^x+m)\,\d t+\d W_t,\quad X_0^x=x.
        \]
        By the Burkholder-Davis-Gundy inequality, Assumption (A3) is satisfied. Here, the operator $ I(t) $ is given by
	\[ \big(I(t)u\big)(x) = \sup_{\nu \in \Pc_{p}(X)}\bigg( \int_{X}u\big(\psi_t(x)+z\big)\,\nu(\d z) - \phi_{t}\big(\Wc_{p}(\mu_t,\nu)\big)\bigg) \]
	for $t\geq0$, $u\in \BUC$, and $x\in X$. The related semigroup $\big(S(t)\big)_{t\geq 0}$ can be computed by solving the semilinear PDE
	\begin{equation}\label{eq.pdeou}
	\partial_tu(t,x)=\big\langle Gx+m, D_xu(t,x)\big\rangle+\frac{1}{2}{\rm tr}\big(CD_{x}^2u(t,x)\big)+\phi^*\big(\|D_xu(t,x)\|\big),
	\end{equation}
	where $D^2_x$ denotes the second Fr\'echet derivative in the space variable.\ We point out that, in comparison to \eqref{eq.pdelindyn}, the resulting PDE incorporates second-order terms. This is due to the consideration of the a priori random noise in terms of a Brownian motion. Roughly speaking, \eqref{eq.pdeou} shows how the two forms of noise, aleatoric and epistemic, enter the equation. The aleatoric (random) noise leads to the term $\frac{1}{2}{\rm tr}\big(CD_{x}^2u(t,x)\big)$ and the epistemic (uncertain) noise leads to the term $\phi^*\big(\|D_xu(t,x)\|\big)$. For a detailed discussion of \eqref{eq.pdeou} in the context of parametric uncertainty, we refer to Nendel and R\"ockner \cite{roecknen}.
	
	\subsection{Geometric Brownian motion}\label{ex.geombm}
	We follow Example \ref{ex.GBM}, and consider the case, where $X=(0,\infty)$, endowed with the multiplication as an ``additive'' operation and the norm $\|x\|:=|\log(x)|$ for all $x\in (0,\infty)$.\ Let $\psi_t(x):=x$, for $x\in (0,\infty)$, and $\mu_t$ be given as the law of the random variable
	\[
	Y_t:=\exp\Big(\big(\alpha-\tfrac{\sigma^2}{2}\big)t +\sigma W_t\Big),
	\]
	where $\alpha\in \R$, $\sigma\geq 0$ and $(W_t)_{t\geq 0}$ is a standard Brownian motion on some probability space. We thus consider dynamics $(\Xi_t^x)$ of the form
	\[
	 \Xi_t^x=xY_t\quad \text{for }t\geq 0\text{ and }x\in (0,\infty).
	\]
         Trivially, the Assumptions (A1) and (A2) are satisfied since $\psi_t$ is the identity for all $t\geq 0$. Assumption (A3) is satisfied since
	\[
	 \E(\|Y_h\|^p)^{1/p}=\E(|\log(Y_h)|^p)^{1/p}\leq h\big|\alpha -\tfrac{\sigma^2}{2}\big|+\sigma \E(|W_h|^p)^{1/p}\to 0\quad \text{as }h\downarrow 0.
	\]
	Here, for $t\geq 0$, $x\in X$, and $u\in \BUC$, the operator $ I(t) $ is given by
	\[ \big(I(t)u\big)(x) = \sup_{\nu \in \Pc_{p}(X)} \bigg(\int_{X}u(xy)\,\nu(\d z) - \phi_{t}\big(\Wc_{p}(\mu_t,\nu)\big)\bigg), \]
	where the considered metric $\Wc_p$ is a logarithmic Wasserstein distance. 

	\bibliographystyle{abbrv}

  % \bibliography{wrps}

\end{document}